\documentclass[11pt]{article}
\usepackage[utf8]{inputenc}
%\linespread{1.2}
%\usepackage{geometry}
%\geometry{a4paper, portrait, margin=3cm}
%\usepackage[dvipsnames]{xcolor}

%\usepackage{tikzit}
%\input{minor_monotony.tikzstyles}

%%%%% GANTT CHART 
\usepackage{pgfgantt} %<---- Gantt chart

%%% IN HEADER
\usepackage{xcolor}
\definecolor{parchment}{RGB}{214, 204, 169}

\usepackage[margin=1.5in]{geometry}

\usepackage{microtype}
\usepackage{verbatim}
\usepackage{graphicx}
\usepackage{rotating}

%for the R-N Lattice
\usepackage{svg}

% We set up interlinked references.
\usepackage{url}
\definecolor{linkColor}{RGB}{156,78,13}
\definecolor{diagramBlue}{RGB}{94, 164 ,255}
\definecolor{diagramGold}{RGB}{255,162,23}
\usepackage{hyperref}
\hypersetup{
    colorlinks = true,
    allcolors = linkColor %set uniform color (def. above) for all links
    }
% We set up nicer (postscript) fonts.
\usepackage{pslatex}

% We set up paragraphs without indentation
\setlength{\parindent}{0pt}

\setcounter{secnumdepth}{1}

% We set up version numbering of drafts
\usepackage{datetime}
\newdateformat{versiondate}{%
\THEMONTH\THEDAY}

% We set up marginal comments for drafts

% We set up Tikz illustration scaling and commutative diagrams
\usepackage{pgf,tikz,environ}
\usetikzlibrary{arrows}
\usetikzlibrary{cd}

% We set the bottom to ragged (fixes too much space in enumerations).
\raggedbottom

% We set up monkey-patching, used to swap theorem head from "Theorem 1.2" to "1.2. Theorem" without changing other defaults
\usepackage{xpatch} 

\usepackage{soul}

% We set up AMS formatting, including seamless paragraph and theorem numbering.
\usepackage{amsthm}
\usepackage{amsmath} 
\usepackage{amsfonts}
\usepackage{amssymb}

\usepackage{sectsty}
\newcommand{\secfont}{\fontfamily{lmss}\selectfont}
\allsectionsfont{\secfont}

\newtheoremstyle{zoltanstyle}
  {1em} % Space above
  {\topsep} % Space below, usually \topsep
  {} % Body font
  {} % Indent amount
  {\bfseries} % Theorem head font \bfseries
  {.} % Punctuation after theorem head
  {.5em} % Space after theorem head
  {} % Theorem head spec (can be left empty, meaning `normal')
\renewcommand{\qed}{\unskip\nobreak\quad\newline\textbf{Qed.}}

\theoremstyle{zoltanstyle}
\swapnumbers
\xpatchcmd\swappedhead{~}{.~}{}{}
\newtheorem{body}{}
\numberwithin{body}{section}

\newtheorem{conjecture}[body]{\secfont Conjecture}
\newtheorem{corollary}[body]{\secfont Corollary}
\newtheorem{definition}[body]{\secfont Definition}
\newtheorem{example}[body]{\secfont Example}

\newtheorem{lemma}[body]{\secfont Lemma}

\newtheorem{proposition}[body]{\secfont Proposition}
\newtheorem{remark}[body]{\secfont Remark}

\newtheorem{theorem}[body]{\secfont Theorem}

% We set up Tikz illustration scaling and commutative diagrams
\usepackage{pgf,tikz,environ}
\usepackage{quiver}
\usetikzlibrary{arrows}
\usetikzlibrary{cd}
\usetikzlibrary{positioning}
\usetikzlibrary{fadings}
\usetikzlibrary{decorations.pathreplacing}
\usetikzlibrary{decorations.pathmorphing}
\tikzset{snake it/.style={decorate, decoration=snake}}

%~~~~~~~~~~~~~~~~~~~~~~~~~~~~~~~~~~~~~~~~~~~~~~~~~~~~~~~~~~~~~~~~~~~~~~~~~
%~~  D    theorem environs and frames  ~~~~~~~~~~~~~~~~~~~~~~~~~~~~~~~~~~~

%--  special styles for bits of text inside theorems etc  ----------------

%--  shaded theorems etc  ------------------------------------------------

\definecolor{parchment}{RGB}{214, 204, 169}

\begin{comment}

\usepackage{mdframed} %for shaded theorems and definitions
\surroundwithmdframed[hidealllines=true,backgroundcolor=parchment!40]{claim}
\surroundwithmdframed[hidealllines=true,backgroundcolor=parchment!40]{conjecture}
\surroundwithmdframed[hidealllines=true,backgroundcolor=parchment!40]{corollary}
\surroundwithmdframed[hidealllines=true,backgroundcolor=parchment!40]{definition}
\surroundwithmdframed[hidealllines=true,backgroundcolor=parchment!40]{example}
\surroundwithmdframed[hidealllines=true,backgroundcolor=parchment!40]{lemma}
\surroundwithmdframed[hidealllines=true,backgroundcolor=parchment!40]{problem}
\surroundwithmdframed[hidealllines=true,backgroundcolor=parchment!40]{proposition}
\surroundwithmdframed[hidealllines=true,backgroundcolor=parchment!40]{remark}
\surroundwithmdframed[hidealllines=true,backgroundcolor=parchment!40]{theorem}

\end{comment}

% We set up "Qed." proof endings.
\expandafter\let\expandafter\oldproof\csname\string\proof\endcsname
\let\oldendproof\endproof

\renewenvironment{proof}[1][\proofname]{%
  \oldproof[\normalfont \bfseries #1.]%
}{\oldendproof}

%to number and reference axioms
\usepackage[inline]{enumitem}
\usepackage{mathptmx}

\usepackage[T1]{fontenc}
\usepackage[utf8]{inputenc}

%%%%%%%%%%%%%%%%%%%%%%%%%%%%%%%%%%%%%%%%%%%%%%%%%%%%%%%%%%%%%%%%%%%%%%%%%%%%%%%%%%%%%%%%%%%%%%%%%%%%%%%%%
%%%%%%%%%%%%%%%%%%%%%%%%%%%%%%%%%%%%%%%%%%%%%%%%%%%%%%%%%%%%%%%%%%%%%%%%%%%%%%%%%%%%%%%%%%%%%%%%%%%%%%%%%
\usetikzlibrary {positioning}
\usetikzlibrary{fadings}
\usetikzlibrary{decorations.pathreplacing}
\usetikzlibrary{decorations.pathmorphing}
\tikzset{snake it/.style={decorate, decoration=snake}}
%

%for NLC commands and to have shorthand commands

%for TikZ drawings
\definecolor{gold}{RGB}{255,215,0}
\definecolor{softBlack}{RGB}{45, 47, 49}
\definecolor{creamWhite}{RGB}{245,244,241}
\definecolor{softGray}{RGB}{220, 216, 214}
\definecolor{brick}{RGB}{232, 48, 48}

%%% ADDED BY BEN: 
\newcommand{\typesetoperator}[1]{\mathsf{#1}}

\usepackage{xspace}
%for TikZ drawings
\definecolor{gold}{RGB}{255,215,0}
\definecolor{softBlack}{RGB}{45, 47, 49}
\definecolor{creamWhite}{RGB}{245,244,241}
\definecolor{softGray}{RGB}{220, 216, 214}
\definecolor{brick}{RGB}{232, 48, 48}

\usepackage{enumitem}
%\usepackage{fullpage} %%%%%%%%<<<<<<<<-----------

%~~~~~~~~~~~~~~~~~~~~~~~~~~~~~~~~~~~~~~~~~~~~~~~~~~~~~~~~~~~~~~~~~~~~~~~~~
%~~  E    mathematical symbols (TYPOGRAPHY)  ~~~~~~~~~~~~~~~~~~~~~~~~~~~~~

%--  modifiers  ----------------------------------------------------------

%--  typesetting commands  -----------------------------------------------
\newcommand{\define}[1]{\textbf{#1}}

\newcommand{\cat}[1]{\mathsf{#1}}

\DeclareMathOperator{\id}{\cat{id}}

\newcommand{\contract}[1]{_{/#1}}

\DeclareMathOperator{\op}{\cat{op}}

\usepackage{adjustbox}

\DeclareMathOperator{\domain}{\typesetoperator{dom}}

\DeclareMathOperator{\mi}{\typesetoperator{mi}}
\DeclareMathOperator{\im}{\typesetoperator{im}}
\DeclareMathOperator{\image}{\typesetoperator{im}}
\DeclareMathOperator{\imageObj}{\typesetoperator{Im}}

\DeclareMathOperator*{\colim}{colim}

%

%--  doublestruck and caligraphic upper letters (first batch)  -----------

\DeclareMathOperator{\Fa}{\mathcal{F}}

\DeclareMathOperator{\La}{\mathcal{L}}  
\DeclareMathOperator{\Ma}{\mathcal{M}}

\DeclareMathOperator{\Qa}{\mathcal{Q}}

%--  doublestruck and caligraphic upper letters (second batch)  ----------

%\newcommand{\Aa}{\mathbb{A}}\newcommand{\aA}{\mathcal{A}}
%\newcommand{\Bb}{\mathbb{B}}\newcommand{\bB}{\mathcal{B}}

%--  sans serif and frak lower letters  ----------------------------------

%%% Lineale stuff %%%

 % discarding map in Set

 % internal hom

%%% Time intervals %%%

%%Added by Fred:

% showkeys switch:
% \usepackage[notref,notcite]{showkeys}

\newcommand{\twoloop}{\text{%
\tikz[baseline]{\fill (0,0.5ex) circle (0.15ex);
\draw (-0.3ex,0.1ex) circle (0.5ex);
\draw (0.3ex,0.9ex) circle (0.5ex);
}}
}

\newcommand{\twodiscrete}{\text{%
\tikz[baseline]{\fill (-0.6ex,0.7ex) circle (0.15ex);
\draw (-0.6ex,0.2ex) circle (0.5ex);
\fill (0.6ex,0.3ex) circle (0.15ex);
\draw (0.6ex,0.8ex) circle (0.5ex);
}}
}

\newcommand{\justanedge}{\text{%
\tikz[baseline]{\fill (0,0) circle (0.15ex);
\draw[-{Stealth[scale=0.7]}] (0,0) -- (0.8ex,0.8ex);
\draw (0.7ex,0.7ex) -- (1ex,1ex);
\fill (1ex,1ex) circle (0.15ex);
}}
}

\newcommand{\edgewithloops}{\text{%
\tikz[baseline]{\draw (-0.4ex,-0.2ex) circle (0.5ex);
\fill (-0.2ex,0.2ex) circle (0.15ex);
\draw[-{Stealth[scale=0.7]}] (-0.2ex,0.2ex) -- (0.78ex,0.62ex);
\draw (0.5ex,0.5ex) -- (1.2ex,0.8ex);
\fill (1.2ex,0.8ex) circle (0.15ex);
\draw (1.4ex,1.2ex) circle (0.5ex);
}}
}

\newcommand{\terminalgraph}{\text{%
\tikz[baseline]{\draw (0.5ex,0.5ex) circle (0.5ex);
\fill (0.5ex,0ex) circle (0.15ex);
}}
}

\newcommand{\fulltwo}{\text{%
\tikz[baseline]{\draw (-0.4ex,-0.2ex) circle (0.5ex);
\fill (-0.2ex,0.2ex) circle (0.15ex);
\draw[-{Stealth[scale=0.7]}] (-0.2ex,0.2ex) arc (180:100:0.8ex);
\draw (-0.2ex,0.2ex) arc (180:45:0.8ex);
\fill (1.2ex,0.8ex) circle (0.15ex);
\draw[-{Stealth[scale=0.7]}] (1.2ex,0.8ex) arc (0:-85:0.8ex);
\draw (1.2ex,0.8ex) arc (0:-135:0.8ex);
\draw (1.4ex,1.2ex) circle (0.5ex);
}}
}

\newcommand{\reflexiveparallel}{\text{%
\tikz[baseline]{\draw (-0.4ex,-0.2ex) circle (0.5ex);
\fill (-0.2ex,0.2ex) circle (0.15ex);
\draw[-{Stealth[scale=0.7]}] (-0.2ex,0.2ex) arc (180:100:0.8ex);
\draw (-0.2ex,0.2ex) arc (180:45:0.8ex);
\fill (1.2ex,0.8ex) circle (0.15ex);
\draw[-{Stealth[scale=0.7]}] (-0.2ex,0.2ex) arc (-135:-30:0.8ex);
\draw (-0.2ex,0.2ex) arc (-135:0:0.8ex);
\draw (1.4ex,1.2ex) circle (0.5ex);
}}
}

\newcommand{\suprisereverseedge}{\text{%
\tikz[baseline]{\draw (-0.4ex,-0.2ex) circle (0.5ex);
\fill (-0.2ex,0.2ex) circle (0.15ex);
\draw[-{Stealth[scale=0.7]}] (-0.2ex,0.2ex) arc (180:100:0.8ex);
\draw (-0.2ex,0.2ex) arc (180:45:0.8ex);
\draw (-0.2ex,0.2ex) -- (1.2ex,0.8ex);
\draw[-{Stealth[scale=0.7]}] (1.2ex,0.8ex) -- (-0.2ex,0.2ex);
\fill (1.2ex,0.8ex) circle (0.15ex);
\draw[-{Stealth[scale=0.7]}] (-0.2ex,0.2ex) arc (-135:-30:0.8ex);
\draw (-0.2ex,0.2ex) arc (-135:0:0.8ex);
\draw (1.4ex,1.2ex) circle (0.5ex);
}}
}

% === END OF PREAMBLE ===

\title{Lassos: Pushing Tree Decompositions Forward Along Homomorphisms}
\author{Benjamin Merlin Bumpus\thanks{Instituto de Matemática e Estatística, Universidade de São Paulo. Rua do Matão, 1010 — 05508–090, São
Paulo, SP, Brasil.} \and James Fairbanks\thanks{University of Florida, Computer \& Information Science \& Engineering, Florida, USA.} \and Will J.\ Turner\thanks{TU Freiberg, Mathematics and Computer Science, Freistaat Sachsen, Deutschland.}}

\date{\today}
%\footnote{These authors received funding from the DARPA ASKEM and Young Faculty Award programs through grants HR00112220038 and W911NF2110323.}

\begin{document}

\maketitle

\begin{abstract}
It is folklore that tree-width is monotone under taking subgraphs (i.e. \textit{injective} graph homomorphisms) and contractions (certain kinds of \textit{surjective} graph homomorphisms). However, although tree-width is obviously not monotone under \textit{any} surjective graph homomorphism, it is not clear whether contractions are canonically the only class of surjections with respect to which it is monotone. Under the requirement that the decomposition shape must be preserved, we prove that this is indeed the case. 

Our results provide a framework for answering questions of this sort for many other kinds of combinatorial data structures (such as directed multigraphs, hypergraphs, Petri nets, circular port graphs, half-edge graphs, databases, simplicial sets etc.) for which natural analogues of tree decompositions can be defined. Furthermore and of independent interest, we prove these results by introducing the notion of a \textit{lasso}, a generalization of contractions of graphs to arbitrary categories with pushouts of monomorphisms.

\let\thefootnote\relax\footnote{-- Authors Bumpus and Fairbanks acknowledge funding from the DARPA ASKEM and Young Faculty Award programs through grants HR00112220038 and W911NF2110323.}\addtocounter{footnote}{-1}\let\thefootnote\svthefootnote
\end{abstract}

\section{Introduction}

\body{Given any graph invariant $\mu$, it is always fruitful to enquire how $\mu$ behaves with respect to graph containment relations. This is often formalized as the question of determining whether the invariant $\mu$ gives rise to an order-preserving (or order-reversing) function $\mu \colon (\mathbb{G}, \triangleleft) \to (\mathbb{N}, \leq)$ from the poset of all graphs $(\mathbb{G}, \triangleleft)$ (viewed under some given order $\triangleleft$) to the natural numbers.}

\body{Graph invariants are often defined in terms of certain families of certifying objects which attest to the fact that a given graph attains some desired property. For instance, for any graph $G$, one has that: (1) $G$ has chromatic number at most $k$ if and only if it admits a proper coloring with at most $k$ colors; or (2) $G$ has tree-width at most $k$ if and only if it admits a tree decomposition of width at most $k$. }

\body{Often the relationship between graph invariants and containment relations can be furthermore carried over to a notion of monotonicity for the certificates for the invariant whereby one has a \textit{function} mapping the certificates of one graph to the certificates on another. For instance, if $H$ is a subgraph of a graph $G$, then: (1) every proper coloring of $G$ induces a proper coloring of $H$ and (2) every tree decomposition of width at most $k$ of $G$ induces on $H$ a decomposition of the same kind. }

\body{Thus the relationship between the sets of certificates of an invariant $\mu$ and some given containment relation $\triangleleft$ is best studied not through morphisms of posets of the form $\mu \colon (\mathbb{G}, \triangleleft) \to (\mathbb{N}, \leq)$, but instead through the notion of a \textit{functor} $M \colon \cat{Grph} \to \cat{Set}$ from the category\footnote{We assume the reader is familiar with the notion of categories, functors and natural transformations. Riehl's textbook~\cite{riehl2017category} is an excellent reference; Bumpus' thesis~\cite[Section 3.2]{bumpus2021generalizing} provides a very basic introduction which only requires minimal background in graph theory.} of graphs and graph homomorphisms $\cat{Grph}$ to the category of sets.}

\body{
In the case of tree decompositions, it has been recently shown~\cite{structured-decompositions} that there is a functor $\mathsf{MDgm} \colon \cat{Grph}^{\op} \to \cat{Set}$ taking each graph to the set of all of its decompositions (we recall this in Lemma~\ref{lemma: monic diagram presheaf}). This functor is contravariant, meaning that it reverses the direction of arrows: given any graph homomorphism \[ G \xrightarrow{f} H\] there is a function \[\mathsf{MDgm}(G) \xleftarrow{\mathsf{MDgm}(f)} \mathsf{MDgm}(H) \] sending each decomposition of $H$ (i.e. an element of $\mathsf{MDgm}(H)$) to a decomposition of $G$ (i.e. an element of $\mathsf{MDgm}(G)$). 
}

\body{
As is well-known in graph theory~\cite{Diestel2010GraphTheory}, it is easy to obtain such a mapping when the morphism $f$ is injective (i.e. $G$ is a subgraph of $H$). To see this, for any decomposition $d$ of $H$, observe that one obtains a decomposition of $G$ by intersecting each bag of $d$ with the image of $G$ under $f$.}

\body{
In contrast, to the best of our knowledge, it was not formalized until recently that one can obtain a decomposition of $G$ from a decomposition of $H$ given \textit{any} morphism $f \colon G \to H$, regardless of whether $f$ is injective or not. This is done by passing from "intersections" to the more general, categorical notion of a \textit{pullback}: given any tree decomposition $d$ of $H$, one can pull $d$ back along \textit{any} morphism $f \colon G \to H$ to obtain a tree decomposition of $G$ of the \textit{same shape} (this result, stated not just for graphs, but in much greater generality, is due to Bumpus, Kocsis, Master and Minichiello~\cite{structured-decompositions} and is also recalled in the present paper as Lemma~\ref{lemma: monic diagram presheaf}) (see also Figure~\ref{fig:example-of-pulling-back-a-decomp}).}

\begin{figure}
    \centering
    \includegraphics[width=.3\textwidth]{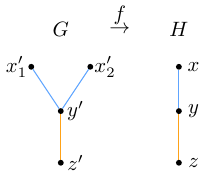}
    \caption{A trivial example of pulling a tree decomposition back along a graph homomorphism which is not injective. We are given two graphs $G$ and $H$, a graph homomorphism $f \colon G \to H$ and a tree decomposition of $H$ highlighted by the two colors of the edges of $H$ (the tree decomposition in question consists of two bags $\{x,y\}$ and $\{y,z\}$ with adhesion $\{y\}$). By pointwise pullback, one obtains a tree decomposition of $G$ consisting of the two bags $\{x'_1, x'_2, y'\}$ and $\{y', z'\}$.}
    \label{fig:example-of-pulling-back-a-decomp}
\end{figure}

\body{
Thus, since one can pull any decomposition of a graph $H$ back along \textit{any} morphism $f \colon G \to H$ to obtain a decomposition of $G$, it is natural to ask whether one can dually push decompositions \textit{forward}. In other words we ask: ``for which morphisms $f \colon G \to H$ can we produce a decomposition for $H$ starting with \textit{any} decomposition of $G$?''
}

\body{
Any graph theorist comfortable with tree decompositions would immediately guess that, if $f$ in the preceding paragraph is a \textit{contraction map}\footnote{Recall that a surjective graph homomorphism $f \colon G \twoheadrightarrow H$ is a \define{contraction map} if the preimage of any vertex in $H$ is a connected subgraph in $G$.} then we can always obtain a decomposition of $H$ from any decomposition $d$ of $G$ and moreover this decomposition will be of the same shape as $d$. This is well-known~\cite{Diestel2010GraphTheory} and the intuitive idea is that one simply identifies any two vertices in any bag of $d$ if those vertices are identified by $f$. 
}

\body{
It is thus natural to ask whether there are any other classes of surjective graph homomorphisms along which one can push a decomposition while leaving the shape of it unaltered. Here we answer this question negatively and show that contraction maps are the canonical class of maps with respect to which one can push decompositions \textit{forwards} while not altering the decomposition shape. Furthermore, we generalize this result far beyond graphs and show that it holds for many other kinds of combinatorial objects (specifically the objects of any topos, quasitopos or any sufficiently well-behaved locally cartesian-closed category). Some examples of these, among others, include: sets, symmetric graphs, directed graphs, directed multigraphs, hypergraphs, Petri nets, databases, simplicial sets, circular port graphs and half-edge graphs.}

\body{
To work at such a level of abstraction, we employ \textit{structured decompositions}~\cite{structured-decompositions}, recent category-theoretic generalization of graph decompositions which can be defined in any category. With lots of hand waving, structured decompositions are straightforward. One starts with a graph $J$ -- the \textit{shape} of the decomposition -- and a category $\cat{C}$ of things one might want to decompose. Then a structured decomposition is an assignment $d$ of objects of $\cat{C}$ to each vertex (a `\textit{bag}' of the decomposition) and edge (an `\textit{adhesion}' of the decomposition) of $J$ together with homomorphisms specifying how the adhesions nestle into their corresponding bags. Waving our hands a little slower, a structured decomposition $d$ is a special kind of diagram in $\cat{C}$ where one thinks of $d$ as \textit{decomposing} an object $c \in \cat{C}$ whenever $\colim d = c$. All of this will be treated formally in Definition~\ref{def:structured-decomposition}; for now however, a picture suffices (q.v. Figure~\ref{fig:str-decomp-example}). %{\color{blue} IMPORTANT WE NEED TO SAY: viewing this as a diagram, we obtain the object as its colimit (otherwise the main theorem could be confusing)}
}

\begin{figure}[h]
    \centering
    \includegraphics[width=.9\textwidth]{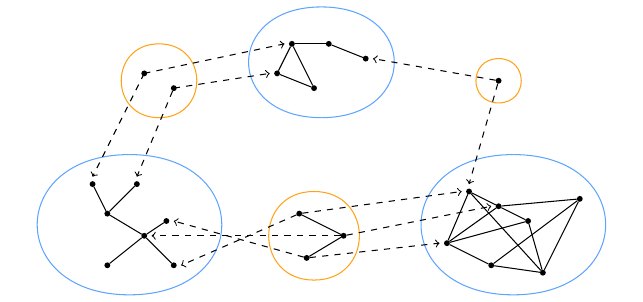}
        \caption{A structured decomposition of graphs whose shape is a triangle (i.e. a complete graph \(K_3\) on three vertices). The bags (circled in blue) and the adhesions (circled in gold) are related by spans of injective graph homomorphisms: these are drawn componentwise where each vertex of an adhesion is sent to a vertex in a relevant bag.}
    \label{fig:str-decomp-example}
\end{figure}

\body{
To obtain our results, we embrace a light category theoretic perspective. Peering through this lens, one finds that \textit{every} contraction map of graphs $f \colon G \twoheadrightarrow H$ arises as a pushout along a subobject of $G$ as in the following diagram. 
\begin{equation}\label{diagram:categorical-def-of-contractions-of-graphs}
% https://q.uiver.app/#q=WzAsNCxbMSwwLCJHIl0sWzEsMSwiSCJdLFswLDAsIksiXSxbMCwxLCJcXG1hdGhzZntjY30oSykiXSxbMCwxLCJmIiwwLHsic3R5bGUiOnsiaGVhZCI6eyJuYW1lIjoiZXBpIn19fV0sWzIsMCwiZyIsMCx7InN0eWxlIjp7InRhaWwiOnsibmFtZSI6Imhvb2siLCJzaWRlIjoidG9wIn19fV0sWzIsMywiIiwwLHsic3R5bGUiOnsiaGVhZCI6eyJuYW1lIjoiZXBpIn19fV0sWzMsMV0sWzEsMiwiIiwxLHsic3R5bGUiOnsibmFtZSI6ImNvcm5lciJ9fV1d
\begin{tikzcd}
	K & G \\
	{\mathsf{cc}(K)} & H
	\arrow["g", hook, from=1-1, to=1-2]
	\arrow[two heads, from=1-1, to=2-1]
	\arrow["f", two heads, from=1-2, to=2-2]
	\arrow[from=2-1, to=2-2]
	\arrow["\lrcorner"{anchor=center, pos=0.125, rotate=180}, draw=none, from=2-2, to=1-1]
\end{tikzcd}
\end{equation}
This is to be read as follows: one first selects a subgraph $K$ of $G$, then one computes its connected components $\mathsf{cc}(K)$ (viewed as a reflexive\footnote{A graph is \define{reflexive} if every vertex has a loop-edge.} discrete graph) and then the pushout constructs $H$ by identifying each connected component of $K$ in $G$. Notice that, generalizing injective maps to \textit{monomorphisms} and surjective maps to \textit{epimorphisms}, the diagram above makes sense in \textit{any category} having enough pushouts. 
}

\body{
This category-theoretic take on the definition of a contraction map suggests a systematic way for \textit{defining} classes of epimorphisms\footnote{Notice that in the special case of sets (or graphs, for that matter) every surjective function can be written as a quotient over the relation given by an injective function. Translated to  ``abstract nonsense'', this is just saying that every epimorphism is \textit{regular} in $\cat{Set}$~\cite[Example 7.72]{adamek1990abstract}.}: one first defines a functor $\La \colon \cat{Grph} \to \cat{Grph}$ taking each graph $K$ to a graph $\La K$ which is to be thought of as playing the role of connected components in the previous paragraph and then one finds a natural transformation $\eta \colon \id_{\cat{Grph}} \to \La$ whose components are epimorphisms of the form $\eta_x \colon X \twoheadrightarrow \La X$. This defines a class of epimorphisms in $\cat{Grph}$ because, since pushouts preserve epimorphisms, one obtains an epimorphism $G \twoheadrightarrow G +_K \La K$ by pushout as we did in Diagram~\ref{diagram:categorical-def-of-contractions-of-graphs}. These observations lead us to the following definition in any category $\cat{C}$ which will be our main subject of study.
}

\begin{definition}\label{def:lasso}
    A \define{lasso} on $\cat{C}$ is a pair $(\La \colon \cat{C} \to \cat{C}, \eta \colon \id_{\cat{C}} \Rightarrow \La)$
    where: 
    \begin{enumerate}[label=\textbf{(L\arabic*)}]
        \item $\La$ is a functor that preserves pushouts of monomorphic spans\footnote{A \define{span} is a diagram of the form $A\leftarrow C\rightarrow B$.}; and \label{cond:lasso-1}
        \item $\eta$ is a natural transformation, all of whose components are epimorphisms. \label{cond:lasso-2}
    \end{enumerate}
\end{definition}

\body{One can verify that connected components are an example of a lasso on the category of graphs (see Section~\ref{sec:cat_of_lassos}). The following is a trivial example of a lasso on any category. }

\begin{example}
    Let $\cat{C}$ be a category. The \define{trivial lasso} on $\cat{C}$ is the lasso $(\id_{\cat{C}}, \id_{\id_{\cat{C}}})$ composed of the identity functor and the identity natural transformation.
\end{example}

\body{
As we have been foreshadowing, one can abstract the category theoretic definition of a contraction of graphs to obtain the notion of a contraction with respect to a choice of lasso.  
}

\begin{definition}\label{def:lasso-contraction}
    A category $\cat{C}$ \define{admits contractions with respect to a lasso} $(\La, \eta)$ if the following pushout exists for all monomorphisms $f \colon X \hookrightarrow Y$ in $\cat{C}$. 
    % https://q.uiver.app/#q=WzAsNCxbMCwwLCJYIl0sWzEsMCwiWSJdLFswLDEsIlxcTGEgWCAiXSxbMSwxLCJZL2YiXSxbMCwxLCJmIiwwLHsic3R5bGUiOnsidGFpbCI6eyJuYW1lIjoiaG9vayIsInNpZGUiOiJ0b3AifX19XSxbMCwyLCJcXGV0YV9YIiwyLHsic3R5bGUiOnsiaGVhZCI6eyJuYW1lIjoiZXBpIn19fV0sWzIsM10sWzEsMywiIiwwLHsic3R5bGUiOnsiaGVhZCI6eyJuYW1lIjoiZXBpIn19fV0sWzMsMCwiIiwxLHsic3R5bGUiOnsibmFtZSI6ImNvcm5lciJ9fV1d
\[\begin{tikzcd}
	X & Y \\
	{\La X } & {Y\contract{f}}
	\arrow["f", hook, from=1-1, to=1-2]
	\arrow["{\eta_X}"', two heads, from=1-1, to=2-1]
	\arrow[from=2-1, to=2-2]
	\arrow[two heads, from=1-2, to=2-2]
	\arrow["\lrcorner"{anchor=center, pos=0.125, rotate=180}, draw=none, from=2-2, to=1-1]
\end{tikzcd}\]
We call the pushout of $f$ and $\eta_X$ the \define{contraction of $Y$ along $f$} and denote it by $Y\contract{f}$. 
\end{definition}

\body{
Definition~\ref{def:lasso-contraction} illuminates why Condition~\ref{cond:lasso-2} is necessary in our definition of lassos (if not absolutely natural). However, we have not yet clarified the reason for the first condition -- namely the preservation of monic pushouts. Spelling this out, Condition~\ref{cond:lasso-1} prescribes that, if we are given any monic span 
% https://q.uiver.app/#q=WzAsMyxbMCwwLCJBIl0sWzIsMCwiQiJdLFsxLDAsIkMiXSxbMiwwLCJhIiwxLHsic3R5bGUiOnsidGFpbCI6eyJuYW1lIjoiaG9vayIsInNpZGUiOiJib3R0b20ifX19XSxbMiwxLCJiIiwxLHsic3R5bGUiOnsidGFpbCI6eyJuYW1lIjoiaG9vayIsInNpZGUiOiJ0b3AifX19XV0=
\(\begin{tikzcd}
	A & C & B
	\arrow["a"{description}, hook', from=1-2, to=1-1]
	\arrow["b"{description}, hook, from=1-2, to=1-3]
\end{tikzcd}\)
with pushout $X$ in~$\cat{C}$, then the pushout of its image % https://q.uiver.app/#q=WzAsMyxbMCwwLCJcXExhIEEiXSxbMiwwLCJcXExhIEIiXSxbMSwwLCJcXExhIEMiXSxbMiwwLCJcXExhIGEiLDFdLFsyLDEsIlxcTGEgYiIsMV1d
\(\begin{tikzcd}
	{\La A} & {\La C} & {\La B}
	\arrow["{\La a}"{description}, from=1-2, to=1-1]
	\arrow["{\La b}"{description}, from=1-2, to=1-3]
\end{tikzcd}\) under $\La$ is $\La X$. Since a structured decomposition is nothing other than a diagram that is drawn by piecing spans of monomorphisms together, it follows by induction that Condition~\ref{cond:lasso-1} implies the preservation of all finite tree-shaped structured decompositions. Now, for any object $C$ in $\cat{C}$, we can contract $C$ along a map $C\hookrightarrow C$ (itself) to obtain $\La C$ as a contraction; hence, without the preservation of monic pushouts, we have fallen at the first hurdle, let alone for more complicated contractions. 
}

\body{In Section~\ref{sec:monotone} (q.v. Paragraph~\ref{para:proof-of-thm:main}) we obtain the following result which shows that decompositions always interact nicely with lasso-contractions whenever one is working in what we call a \textit{mono-strong} category (c.f. Definition~\ref{def: mono-strong}). Examples of such categories include categories of: sets, symmetric graphs, directed graphs, directed multigraphs, hypergraphs, Petri nets, circular port graphs half-edge graphs, databases, simplicial sets etc..}

\begin{theorem}\label{thm:main}
Suppose $C$ is a mono-strong category admitting pullback-stable colimits of shape $\smallint T \to \cat{C}$ for all trees $T$. If $d \colon \smallint T \to \cat{C}$ be a \textit{tree-shaped} structured decomposition with colimit $Y$, then, for any lasso $(\La, \eta)$ on $\cat{C}$ and any monomorphism $f \colon X \hookrightarrow Y$, we have that:
\begin{enumerate}
    \item we can push the diagram $d$ forwards along the $(\La, \eta)$-contraction $Y \twoheadrightarrow Y\contract{f}$ to obtain a diagram $d\contract{f} \colon \smallint T \to \cat{C}$ of the same shape;
    \item the width\footnote{The \textit{width} of a tree decomposition of graphs is the maximum size of any of the pieces (often called \textit{bags} or \textit{parts}) of the decomposition. This notion admits an appropriate categorial generalization as shown in~\cite{structured-decompositions}.} of $d/f$ is at most that of $d$; and
    \item $\colim (d \contract{f}) = (\colim d)\contract{f} = Y\contract{f}$.
\end{enumerate}
\end{theorem}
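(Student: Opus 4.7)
The plan is to build $d\contract{f}$ locally, by first restricting $f$ to each bag and adhesion of $d$ via pullback and then applying the lasso contraction of Definition~\ref{def:lasso-contraction} pointwise. Since $\cat{C}$ is adhesive and $d$ is a tree-shaped structured decomposition, Lemma~\ref{lemma:adhesive-colimits} tells us that the colimit cocone legs $\iota_v \colon d(v) \hookrightarrow Y$ are monic. I pull back $f \colon X \hookrightarrow Y$ along each $\iota_v$ to obtain a mono $f_v \colon X_v \hookrightarrow d(v)$, using that monos are pullback-stable in an adhesive category; and since pullback is functorial on the slice $\cat{C}/Y$, these $X_v$ together with their induced transition maps assemble into a structured decomposition of $X$ of the same shape as $d$ (which is itself an instance of Lemma~\ref{lemma:adhesive-colimits}).

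Next, at each vertex $v$ I declare $d\contract{f}(v) := d(v)\contract{f_v}$, the lasso contraction of $d(v)$ along $f_v$. The naturality of $\eta$ together with the universal property of pushout induces transition maps on the contracted bags and adhesions, so the assignment $v \mapsto d\contract{f}(v)$ extends to a functor $\int J \to \cat{C}$ of the same shape as $d$. This settles claim~(1). Claim~(2) is immediate: the leg $d(v) \twoheadrightarrow d\contract{f}(v)$ is the pushout of $\eta_{X_v}$ along the mono $f_v$, hence epic by Condition~\ref{cond:lasso-2} (and pushout-stability of epis), so each contracted bag is a quotient of the corresponding original bag, and the categorical width from~\cite{structured-decompositions} can only stay the same or decrease.

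The substantive claim is~(3), which I intend to deduce by commuting pushouts with the tree-shaped colimit defining $Y$. Unfolding Definition~\ref{def:lasso-contraction}, $Y\contract{f} = Y +_X \La X$. By the construction above, $X = \colim_v X_v$, and Condition~\ref{cond:lasso-1}, invoked inductively along the tree structure of $J$, gives $\La X = \colim_v \La X_v$; the induction step is a pushout of a monic span, which $\La$ preserves. Finally, since pushouts commute with colimits, the pointwise pushouts $d(v) +_{X_v} \La X_v = d\contract{f}(v)$ reassemble globally into $Y +_X \La X$, yielding $\colim d\contract{f} = Y\contract{f}$. The main obstacle is the bookkeeping of this inductive pass: at every internal node of the tree, I need the relevant spans to remain monic so that Condition~\ref{cond:lasso-1} is applicable, which should go through because adhesive categories guarantee that monicity of colimit legs is preserved under pushout along monos (the van Kampen property).
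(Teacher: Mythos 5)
Your construction of $d\contract{f}$ by pointwise contraction---setting $d\contract{f}(v) := d(v)\contract{f_v}$ where $f_v \colon X_v \hookrightarrow d(v)$ is the pullback of $f$ along the colimit leg---is exactly the naive approach that the paper explicitly rules out in Paragraph~\ref{para:cant_just_contract}. The gap is in claim~(1): the transition maps of the resulting diagram need not be monomorphisms, so $d\contract{f}$ fails to be a structured decomposition in the sense of Definition~\ref{def:structured-decomposition}, hence is not a tree decomposition of $Y\contract{f}$, which is what the theorem is really asserting (and what claim~(2) about width presupposes). Concretely, in $\cat{Grph}$ with the connected-components lasso, take an adhesion $d(e)$ whose intersection $X_e$ with $X$ contains two vertices lying in distinct components of $X_e$ but in the same component of $X_v$ for an incident bag: then $\La X_e \to \La X_v$ identifies them, so the induced map $d(e)\contract{f_e} \to d(v)\contract{f_v}$ is not injective. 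Your colimit computation in~(3) is actually sound---$\colim_v \bigl(d(v) +_{X_v} \La X_v\bigr) = Y +_X \La X$ does hold by commuting colimits, and is essentially Proposition~\ref{prop:local-contractions-give-correct-colimit} with $\La X_v$ in place of the image objects---but object-level correctness of the colimit does not repair the failure of monicity at the level of spans.

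The paper's fix is to replace the local contractions by the \emph{images} of the bags under the single global map $\eta_X^{\#}\Lambda_v \colon d(v) \to Y\contract{f}$ (Theorem~\ref{thm:decomp-contraction} and Corollary~\ref{cor:main}): since every bag of that diagram comes equipped with a monomorphism into $Y\contract{f}$, its spans are automatically monic by the mono-triangle observation of Paragraph~\ref{para:mono-triangle}, and Lemma~\ref{lemma:colimit-preseved-by-images} guarantees the colimit is unchanged. If you want to salvage your argument you could post-compose your pointwise construction with this image-factorization step, but you would then have to prove that the images of the global cocone have the correct colimit and assemble functorially---which is precisely the content of Section~\ref{sec:monotone} that your proposal bypasses.
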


\begin{remark}
    Theorem~\ref{thm:main} can be adapted to structured decompositions of \textit{arbitrary shape} by imposing additional requirements on the definition of a lasso: in addition to preserving monic pushouts, one furthermore demands the preservation of colimits (qv. the notion of \textit{strong lassos} given in Definition~\ref{def:strong-lasso} and Proposition~\ref{prop:local-contractions-give-correct-colimit}).  
\end{remark}

\body{
Our categorical treatment allows us to employ elementary arguments to conclude the following canonicity result. We will detail the arguments in Section~\ref{sec:cat_of_lassos} (q.v. Paragraph~\ref{para:proof-of-thm:canonicity} in particular). In turn, this implies canonicity of the notion of contraction maps (Corollary~\ref{corollary:contraction-canonical}).
}

\begin{theorem}\label{thm:canonicity}
The connected component lasso is the only nontrivial lasso on the category $\cat{Grph}$ of directed multigraphs.
\end{theorem}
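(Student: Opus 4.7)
The plan is to leverage that $\cat{Grph}$ is a presheaf category, so that (i) its epimorphisms are exactly the graph homomorphisms that are surjective on both vertices and edges, and (ii) every multigraph $G$ admits a canonical presentation as an iterated pushout of monomorphic spans built from three elementary objects: the single-vertex graph $V$, the free arrow $E = \bullet \to \bullet$, and the self-loop $L$. Concretely, one first forms the discrete graph on $V(G)$ as a coproduct of copies of $V$ (a pushout over the initial object, whose legs are trivially monic), then glues in each non-loop edge as a pushout along $V + V \hookrightarrow E$, and each loop as a pushout along $V \hookrightarrow L$. By condition~\ref{cond:lasso-1}, this forces $\La$ to be determined up to isomorphism by the triple $(\La V, \La E, \La L)$, and naturality then pins down each $\eta_G$ via the colimit's universal property.

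Next, I enumerate admissible triples using condition~\ref{cond:lasso-2}. Since each component of $\eta$ must be a graph homomorphism that is surjective on both vertices and edges, a direct cardinality and edge-preservation argument forces $\La V \cong V$ and $\La L \cong L$. For $\La E$, the same constraints leave exactly two options: either $\La E \cong E$ with $\eta_E = \id_E$, or $\La E \cong L$ with $\eta_E$ the canonical collapse $E \twoheadrightarrow L$ (other candidates, such as $V+V$, $V$, or $L+V$, are ruled out because they either lack an edge to receive the edge of $E$, or fail to be surjective on vertices while still forming a graph homomorphism).

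Finally, I apply condition~\ref{cond:lasso-1} to the iterated presentation of $G$ in each case. If $\La E \cong E$, every gluing step is preserved on the nose, giving $\La = \id_{\cat{Grph}}$, the trivial lasso. If $\La E \cong L$, then replacing each pushout along $V + V \hookrightarrow E$ by a pushout along $V + V \to L$ forces the two endpoints of every non-loop edge of $G$ to be identified in $\La G$, with the edge itself becoming a loop on the identified vertex. Iterating this across all edges of $G$ yields precisely the connected-component quotient $\La G = \mathsf{cc}(G)$, with $\eta_G$ the associated quotient map. The main obstacle I expect is making the iterated-pushout presentation of an arbitrary (potentially infinite) multigraph rigorous, and then verifying from the universal property that $\eta_G$ is uniquely determined by naturality -- essentially bookkeeping, but requiring care for graphs with many parallel edges or loops.
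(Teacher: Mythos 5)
Your route is genuinely different from the paper's. The paper never presents a graph by generators and relations; instead it probes an \emph{arbitrary} graph $G$ with morphisms to and from a handful of small test graphs and lets naturality of $\eta$ do the work: mapping the single-edge graph onto each edge of $G$ forces identifications from below, while mapping $G$ onto the one-vertex-two-loops graph (resp.\ onto the disjoint union of two looped vertices) shows that identifying two edges (resp.\ two vertices in distinct components) would force $\La$ to destroy a monic pushout it is required to preserve (Lemmas~\ref{lemma:condition_for_double_loop_particle}, \ref{lemma:oh_no}, \ref{lemma:contraction_terminal} and~\ref{lemma:basically_cc}). Your generators-and-relations argument is attractive and, for \emph{finite} multigraphs, essentially sound: binary coproducts are pushouts of monic spans over the initial object, so condition~\ref{cond:lasso-1} together with naturality really does propagate the forced values $\La V\cong V$, $\La L\cong L$ and the two admissible choices of $\eta_E$ through a finite iterated-pushout presentation, and your enumeration of the epimorphic quotients of the single-vertex, single-edge and single-loop graphs is complete. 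Where it works, your argument is arguably more conceptual than the paper's case analysis.

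However, the obstacle you flag for infinite multigraphs is a genuine gap, not bookkeeping. The category $\cat{Grph}=\cat{Set}^{\cat{SGr}}$ contains infinite graphs, and condition~\ref{cond:lasso-1} only guarantees preservation of \emph{single} pushouts of monic spans; it gives you neither preservation of infinite coproducts (needed already for the discrete graph on an infinite vertex set) nor preservation of transfinite composites of pushouts. So for infinite $G$ your presentation simply does not determine $\La G$ or $\eta_G$, and uniqueness is unproved precisely on that class of objects. The repair is to abandon the presentation of $G$ itself and instead probe an arbitrary $G$ by naturality: use the maps $E\to G$ hitting each edge to force endpoint identifications from below, and the maps from $G$ to small graphs such as $L+L$ (a binary coproduct, hence preserved, so $\La(L+L)=L+L$) to bound the identifications from above --- but at that point you have essentially reconstructed the paper's proof.
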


\begin{corollary}\label{corollary:contraction-canonical}
In $\cat{Grph}$ the class of all epimorphisms with respect to which tree-width is monotone is precisely the class of contractions. 
\end{corollary}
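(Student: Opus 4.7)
The plan is to deduce the corollary directly from Theorem~\ref{thm:main} and Theorem~\ref{thm:canonicity}. For the easy inclusion, I first observe that the connected components assignment $\mathsf{cc} \colon \cat{Grph} \to \cat{Grph}$, sending each graph to the reflexive discrete graph on its connected components, together with the canonical quotient $\eta_G \colon G \twoheadrightarrow \mathsf{cc}(G)$, forms a nontrivial lasso in the sense of Definition~\ref{def:lasso}; this is the prototypical example discussed in Section~\ref{sec:cat_of_lassos}. With respect to this lasso, the contraction $G\contract{f}$ along a subgraph inclusion $f \colon K \hookrightarrow G$ (Definition~\ref{def:lasso-contraction}) reduces to the classical graph-theoretic operation of identifying each connected component of $K$ to a point. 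Applying Theorem~\ref{thm:main} then shows that every tree decomposition of $G$ pushes forward to a shape-preserving tree decomposition of $G\contract{f}$ of at most the same width, whence $\tw(G\contract{f}) \leq \tw(G)$ and so tree-width is monotone along all contractions.

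For the converse, I would let $\mathcal{E}$ denote a class of epimorphisms in $\cat{Grph}$ along which tree-width is monotone in the structured pushforward sense of Theorem~\ref{thm:main}, and show that $\mathcal{E}$ must coincide with the contraction class. Since $\cat{Grph}$ is adhesive and every epimorphism of graphs is regular, each $f \in \mathcal{E}$ realises as a pushout of a monic span on its domain. The monotonicity hypothesis, applied to canonical ``test'' decompositions (for instance the one-bag decomposition whose single bag is the entire domain), pins down these pointwise pushouts to be functorial and to preserve monic pushouts, so they organise into a functor $\La \colon \cat{Grph} \to \cat{Grph}$ and a natural transformation $\eta \colon \id_{\cat{Grph}} \Rightarrow \La$ satisfying conditions~\ref{cond:lasso-1} and~\ref{cond:lasso-2}: in other words, $\mathcal{E}$ is the contraction class of some lasso on $\cat{Grph}$. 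Invoking Theorem~\ref{thm:canonicity}, the only nontrivial such lasso is the connected components one, whose contractions are precisely the classical graph contractions; hence $\mathcal{E}$ consists exactly of contractions.

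I expect the main obstacle to be this ``assembly'' step: verifying that the monotonicity condition on $\mathcal{E}$ genuinely forces the pointwise pushouts to be functorial, natural, and compatible with monic pushouts, rather than merely an ad hoc collection of maps that happen to decrease the numerical invariant $\tw$. The cleanest route should be to exploit adhesivity of $\cat{Grph}$ together with the explicit pushforward formula of Theorem~\ref{thm:main} as a reverse-engineering tool that reconstructs $\La$ and $\eta$ from the class $\mathcal{E}$; once this is in hand, Theorem~\ref{thm:canonicity} delivers the result immediately.
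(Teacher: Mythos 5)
Your forward direction matches the paper's intent exactly: the connected-components lasso exists on $\cat{Grph}$, its contractions along subgraph inclusions are the classical graph contractions, and Theorem~\ref{thm:main} pushes any tree decomposition forward with the same shape and non-increasing width. The paper gives no explicit proof of this corollary beyond ``combine Theorem~\ref{thm:main} with Theorem~\ref{thm:canonicity}'', and that is also the skeleton of your argument, so at that level you are aligned.

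The trouble is the converse, and you have correctly located the soft spot but not repaired it. If one reads ``the class of all epimorphisms with respect to which tree-width is monotone'' literally --- any class $\mathcal{E}$ of surjections $f$ with $\tw(\codomain f)\le\tw(\domain f)$ --- the statement is false as a set-theoretic identity: the class of all surjections onto forests is tree-width-monotone in this numerical sense and is much larger than the class of contractions. The corollary only makes sense read the way the abstract phrases it: classes of epimorphisms that \emph{preserve tree decompositions and the shape of the decomposition tree}, i.e.\ classes arising as $(\La,\eta)$-contractions for some lasso. Under that reading the converse is immediate from Theorem~\ref{thm:canonicity} and there is nothing left to ``assemble''. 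Your proposed reverse-engineering step --- that monotonicity applied to test decompositions forces the pointwise pushouts to organise into a functor $\La$ with a natural epimorphic $\eta$ preserving monic pushouts --- is a hope rather than an argument, and I do not see how to make it one: naturality of $\eta$ is a coherence condition relating $\eta_G$ and $\eta_H$ across \emph{every} homomorphism $G\to H$, and no amount of width bookkeeping on individual decompositions of individual graphs imposes it. The fix is not to close that gap but to state the corollary as quantifying over lasso-contraction classes, after which Theorems~\ref{thm:main} and~\ref{thm:canonicity} finish the job exactly as in your first paragraph.
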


\section{Preliminaries}
\body{
In this section we will recall some basic categorical notions which we will need in the rest of the article. These include  image factorizations in arbitrary categories, the epi- and mono-triangle lemmas and a brief review of the category of graphs viewed as a functor category. Finally we will recall some examples of categories that admit pullback-stable colimits and show how many familiar kinds of combinatorial data structures (e.g. categories of: sets, symmetric graphs, directed graphs, directed multigraphs, hypergraphs, Petri nets, circular port graphs half-edge graphs, databases, simplicial sets etc.) assemble into such categories.
}

\body{
As is common in category theory, we view graphs as functors \(G \colon \cat{SGr} \to \cat{Set}\) where \(\cat{SGr}\) is a category with only two objects called \(E\) and \(V\) and two non-identity morphisms \(s, t \colon E \to V\) (the `source' and `target' maps). The functor category \(\cat{Set}^{\cat{SGr}}\) -- which we denote as $\cat{Grph}$ for convenience -- has directed graphs (allowing loops and parallel edges) as objects and graph homomorphisms as arrows. To spell this out, notice that a functor \(G \colon \cat{SGr} \to \cat{Set}\) consists of: 
\begin{itemize}
    \item a set of vertices \(G(V)\) and a set of edges \(G(E)\)
    \item a source function $G(s) \colon G(E) \to G(V)$ and a target function $G(s) \colon G(E) \to G(V)$ which map each edge of $G$ to its source and target vertices. 
\end{itemize}
Being a functor category, the arrows in $\cat{Set}^{\cat{SGr}}$ are natural transformations. It is easy to verify that a natural transformation \(\eta \colon G \Rightarrow H\) defines a graph homomorphism from $G$ to $H$. 
}\label{para: def graph}

\body{
As pointed out by Spivak~\cite{SPIVAK201231}, a functor category $\cat{Set}^\cat{C}$ (where $\cat{C}$ is finite) can be seen as a category of combinatorial data: the category $\cat{C}$ is playing the role of a schema that defines the kind of data structure one might be interested in (just as the category $\cat{SGr}$ was used to define $\cat{Grph}$ in the previous paragraph). For example, the following is the schema category used to define a category of Petri nets. 
% https://q.uiver.app/#q=WzAsNSxbMSwwLCJcXG1hdGhzZntJbnB1dH0iXSxbMSwxLCJcXG1hdGhzZntTcGVjaWVzfSJdLFsxLDIsIlxcbWF0aHNme091dHB1dH0iXSxbMiwxLCJcXG1hdGhzZntUcmFuc2l0aW9ufSJdLFswLDEsIlxcbWF0aHNme1Rva2VufSJdLFswLDFdLFsyLDFdLFs0LDFdLFswLDNdLFsyLDNdXQ==&macro_url=https%3A%2F%2Fq.uiver.app%2F%23q%3DWzAsNSxbMCwxLCJGXzFeMTo9XFx7YV8xLCBhXzIsIGIsIGNcXH0iXSxbNCwxLCJGXzNeMzo9XFx7YV9cXHN0YXIsYicsIGMnXFx9Il0sWzEsMCwiRl8xXjIgPSBcXHthXzEsIGFfMixjXFx9Il0sWzIsMSwiRl8yXjI6PVxce2FfXFxzdGFyLGInLGNcXH0iXSxbMywwLCJGXzJeMyA9IFxce2FfXFxzdGFyLCBiJ1xcfSJdLFsyLDAsImZfezEsMn1eMSJdLFsyLDMsImZfezEsMn1eMiIsMl0sWzQsMywiZl97MiwzfV4yIl0sWzQsMSwiZl97MiwzfV4zIiwyXV0%3D
\[\begin{tikzcd}[row sep = small]
	& {\mathsf{Input}} \\
	{\mathsf{Token}} & {\mathsf{Species}} & {\mathsf{Transition}} \\
	& {\mathsf{Output}}
	\arrow[from=1-2, to=2-2]
	\arrow[from=1-2, to=2-3]
	\arrow[from=2-1, to=2-2]
	\arrow[from=3-2, to=2-2]
	\arrow[from=3-2, to=2-3]
\end{tikzcd}\]
Such functor categories (also known as `$\cat{C}$-sets'~\cite{SPIVAK201231, Patterson2022categoricaldata}) all share certain properties in common with the category of graphs and in fact they all form \textit{presheaf toposes}. Thus, rather than proving our results for each of these categories in turn, we will argue at the level of what we will call \textit{mono-strong} categories (c.f. Definition~\ref{def: mono-strong}). Roughly, one should think of such a category as one where pushouts and pullbacks behave nicely with each other. 
}\label{para: def petri}

\begin{example}\label{example:pushout}
Recall that a \textit{pushout} in a category $\cat{C}$ is a colimit of a diagram of shape $\bullet \leftarrow \bullet \rightarrow \bullet$. Roughly this is to be thought of as the operation of `gluing' two objects (the feet of the span) together along a third mediating object (the apex of the span). In the case of graphs, the pushout of a span $G_1 \xleftarrow{f_1} G_{12} \xrightarrow{f_2} G_2$ is the cospan $G_1 \rightarrow G_1 +_{G_{12}} G_{2} \leftarrow G_2$ where the graph $G_1 +_{G_{12}} G_{2}$ is defined as the quotient of the disjoint union $G_1 + G_2$ of $G_1$ and $G_2$ under the equivalence relation which identifies two vertices (resp. edges) $x$ and $y$  of $G_1 + G_2$ whenever there is a vertex (resp. edge) $w$ of $G_{12}$ such that $f_1(w) = x$ and $f_2(w) = y$. 
\end{example}

\body{
Pushouts are special cases of more general constructions called \textit{colimits}. The interaction of pushouts (or colimits more generally) with pullbacks is not always easy to determine. However, in many familiar cases such as toposes (e.g. categories of (co)presheaves c.f Paragraphs~\ref{para: def graph} and~\ref{para: def petri}), quasitoposes and even more generally any locally cartesian-closed category, one has a strong property of pullback-stable colimits given below.
}

\begin{definition}\label{def:pullback-stable}
    Let $\mathcal{J}$ be a class of diagrams in a category $\cat{C}$ with pullbacks and colimits of all diagrams in $\mathcal{J}$. One says that a $\cat{C}$ has pullback-stable colimits of type $\mathcal{J}$ if for any diagram $d$ in the class and any morphism $f \colon x \to \colim(d)$, the diagram $d'$ obtained by taking pointwise pullbacks of the colimit cocone of $d$ and $f$ satisfies $\colim(d') \cong x$.
\end{definition}

\body{
Structured decompositions, the category theoretic generalization of graph decompositions which we will use throughout this article, are most well-behaved in categories that have enough pullback-stable colimits.  
}

\begin{definition}[\cite{structured-decompositions}]\label{def:structured-decomposition}
For any graph $J$ one can define a \textit{category} $\int J$ as follows:
\begin{enumerate}
    \item each vertex $x$ of $J$ gives rise to an object of $\int J$;
    \item each edge $e$ of $J$ gives rise to an object of $\int J$;
    \item each edge $e = (x,y)$ in $J$ gives rise to precisely two arrows in $\int J$: one from $e$ to $x$ and one from $e$ to $y$ (arrows of this kind are the only non-identity arrows in $\int J$).
\end{enumerate} Fixing a base category $\cat{C}$, a $\cat{C}$-valued \define{structured decomposition} of shape $J$ is a diagram of the form \(d \colon \int J \to \cat{C}\) which assigns to each edge $e = (x,y)$ in $J$ a span $d(x) \hookleftarrow d(e) \hookrightarrow d(y)$ of monomorphisms (cf. Figure~\ref{fig:str-decomp-example} where $J$ is taken to be the graph $K_3$). 
$\cat{C}$-valued structured decompositions (of all shapes) assemble into a \define{category $\mathfrak{D}(\cat{C})$} where a morphism from a structured decomposition $d_1 \colon \int J_1 \to \cat{C}$ to a structured decomposition $d_2 \colon \int J_2 \to \cat{C}$ is a pair $(f,\gamma)$ of a functor $f$ and an natural transformation $\gamma\colon d_1\Rightarrow d_2f$ as in the following triangle.
% https://q.uiver.app/#q=WzAsNCxbMiwxXSxbMCwwLCJcXGludCBKXzEiXSxbMiwwLCJcXGludCBKXzIiXSxbMSwyLCJcXGNhdHtDfSJdLFsxLDMsImRfMSIsMl0sWzIsMywiZF8yIl0sWzEsMiwiZiJdLFs0LDIsIlxcZ2FtbWEiLDAseyJzaG9ydGVuIjp7InNvdXJjZSI6MzAsInRhcmdldCI6MzB9fV1d
\[\begin{tikzcd}
	{\int J_1} && {\int J_2} \\
	&& {} \\
	& {\cat{C}}
	\arrow["f", from=1-1, to=1-3]
	\arrow[""{name=0, anchor=center, inner sep=0}, "{d_1}"', from=1-1, to=3-2]
	\arrow["{d_2}", from=1-3, to=3-2]
	\arrow["\gamma", shorten <=13pt, shorten >=13pt, Rightarrow, from=0, to=1-3]
\end{tikzcd}\]
\end{definition}

\body{To aid intuition, we make an analogy to graph decompositions. Fix any decomposition $d \colon \int J \to \cat{C}$. For any vertex $x$ in $J$, think of the object $d(x)$ in $\cat{C}$ as a \textit{bag} of the decomposition. Given any edge $e = (x,y)$ of $G$, think of \(d(e)\) as an \textit{adhesion}  of the decomposition. The span $d(x) \hookleftarrow {d (e)} \hookrightarrow d(y)$ specifies how the adhesion fits into its corresponding bags. One says that $d$ \define{decomposes} an object $x$ if $\colim d = x$.
}

\body{
Other than categories with pullback-stable colimits and structured decompositions, we will also frequently use image factorizations. These are the categorical generalization to arbitrary categories of the notion of factorizing a function of sets (or a group homomorphism, etc.) through its image. 
}

\begin{definition}\label{def:image-factorization}
By a \define{factorization} of a morphism $X \xrightarrow{f} Y$, we mean any pair of composable morphisms $X \xrightarrow{g} Z \xrightarrow{h} Y$ such that $hg = f$. We will say that the factorization is \define{mono} if $h$ is a monomorphism. The \define{image factorization} of a morphism $f \colon X \to Y$, if it exists, is the mono-factorization denoted $X \xrightarrow{\mi f} \imageObj f \xrightarrow{\image f} Y$ which is initial in the category of mono-factorizations. 
\end{definition}

\body{
Definition~\ref{def:image-factorization} can be unpacked as follows: a mono-factorization $X \xrightarrow{\mi f} \imageObj f \xrightarrow{\image f} Y$ of a morphism $f$ is its \textit{image factorization} if, for any other mono-factorization $X \to Z \hookrightarrow Y$ of $f$ there is a \textit{unique} morphism $\imageObj f \to Z$ making the following diagram commute.
    % https://q.uiver.app/#q=WzAsNCxbMCwwLCJYIl0sWzIsMCwiWSJdLFsxLDEsIlxcaW1hZ2VPYmogZiJdLFsxLDIsIlxcZm9yYWxsIFoiXSxbMCwxLCJmIl0sWzIsMSwiXFxpbSBmIiwwLHsic3R5bGUiOnsidGFpbCI6eyJuYW1lIjoiaG9vayIsInNpZGUiOiJ0b3AifX19XSxbMywxLCIiLDIseyJzdHlsZSI6eyJ0YWlsIjp7Im5hbWUiOiJob29rIiwic2lkZSI6InRvcCJ9fX1dLFswLDIsIlxcbWkgZiJdLFswLDNdLFsyLDMsIlxcZXhpc3RzISIsMCx7InN0eWxlIjp7ImJvZHkiOnsibmFtZSI6ImRhc2hlZCJ9fX1dXQ==
\[\begin{tikzcd}
	X && Y \\
	& {\imageObj f} \\
	& {\forall Z}
	\arrow["f", from=1-1, to=1-3]
	\arrow["{\image f}", hook, from=2-2, to=1-3]
	\arrow[hook, from=3-2, to=1-3]
	\arrow["{\mi f}", from=1-1, to=2-2]
	\arrow[from=1-1, to=3-2]
	\arrow["{\exists!}", dashed, from=2-2, to=3-2]
\end{tikzcd}\]
}

\body{
It is a folkloric fact (q.v. Riehl~\cite{riehl2017category}) that, if a category has all equalizers, then image factorizations consist of an epimorphism and a monomorphism respectively (i.e. they are epi-mono-factorizations). This is the case in the category of sets, the category of graphs, any functor category $\cat{Set}^\cat{J}$ and indeed any topos. Another feature common to many of these examples is that of being \textit{balanced}.
}

\begin{definition}\label{def:balanced}
    A category is \define{balanced} if, for any morphism, being both monomorphic and epimorphic implies being an isomorphim.
\end{definition}

\body{
It will often be very useful in the proofs of the next section to be able to deduce, given a commutative triangle, if one of the three arrows is an epimorphism (or a monomorphism) when it is known that the other two also are. Thus we recall the following folkloric lemma.  
}

\begin{lemma}[epi-triangle lemma]\label{lemma:epi-triangle}
In any category, suppose that $yx$ is a factorisation of some epimorphism $f$. Then $y$ is an epic.
% https://q.uiver.app/#q=WzAsMyxbMCwxLCJcXGJ1bGxldCJdLFsxLDAsIlxcYnVsbGV0Il0sWzIsMSwiXFxidWxsZXQiXSxbMCwyLCJmIiwwLHsic3R5bGUiOnsiaGVhZCI6eyJuYW1lIjoiZXBpIn19fV0sWzEsMiwieSJdLFswLDEsIngiXV0=
\[\begin{tikzcd}
	& \bullet \\
	\bullet && \bullet
	\arrow["y", from=1-2, to=2-3]
	\arrow["x", from=2-1, to=1-2]
	\arrow["f", two heads, from=2-1, to=2-3]
\end{tikzcd}\]
\end{lemma}
\begin{proof}
For all $p,q \colon Y \to A$, if $py=qy$ then $pf = pyx = qyx = qf$. Since $y$ is epic, $p = q$.
\end{proof}

\body{\label{para:mono-triangle}
For ease of reference we note that by categorical duality Lemma~\ref{lemma:epi-triangle} also implies the following: if a monomorphism $f$ factors as $xy$, then $y$ is monic.
% https://q.uiver.app/#q=WzAsMyxbMCwxLCJcXGJ1bGxldCJdLFsxLDAsIlxcYnVsbGV0Il0sWzIsMSwiXFxidWxsZXQiXSxbMSwwLCJ4IiwyXSxbMiwxLCJ5IiwyXSxbMiwwLCJmIiwyLHsic3R5bGUiOnsidGFpbCI6eyJuYW1lIjoiaG9vayIsInNpZGUiOiJ0b3AifX19XV0=
\[\begin{tikzcd}
	& \bullet \\
	\bullet && \bullet
	\arrow["x"', from=1-2, to=2-1]
	\arrow["y"', from=2-3, to=1-2]
	\arrow["f"', hook, from=2-3, to=2-1]
\end{tikzcd}\]
}

\begin{lemma}\label{image_epi_precomp}
    Let $A\xrightarrow{g}B\xrightarrow{f}C$ be two arrows in a balanced category  $\cat{C}$ admitting image factorisations. If $g$ is epic, then $\imageObj f\circ g$ and $\imageObj f$ are isomorphic.
\end{lemma}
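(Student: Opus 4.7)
The plan is to construct a canonical comparison morphism $\phi\colon \imageObj(fg) \to \imageObj f$ using the universal property of the image factorisation, and then to show that $\phi$ is simultaneously monic and epic; since $\cat{C}$ is balanced, this will force $\phi$ to be an isomorphism.

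First I would observe that the composite $A \xrightarrow{\mi f \circ g} \imageObj f \xrightarrow{\image f} C$ is a mono-factorisation of $fg$, simply because $\image f$ is a monomorphism. By the initiality clause in Definition~\ref{def:image-factorization} applied to the image factorisation $A \xrightarrow{\mi(fg)} \imageObj(fg) \xrightarrow{\image(fg)} C$, there is then a unique morphism $\phi\colon \imageObj(fg)\to \imageObj f$ which satisfies both $\image f \circ \phi = \image(fg)$ and $\phi \circ \mi(fg) = \mi f \circ g$.

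I would then read off monicity and epicity of $\phi$ from the two triangles in which it sits. The triangle $\image f \circ \phi = \image(fg)$ has both legs monic, so the mono-triangle dual of Lemma~\ref{lemma:epi-triangle} (recorded in Paragraph~\ref{para:mono-triangle}) yields that $\phi$ is monic. The triangle $\phi \circ \mi(fg) = \mi f \circ g$ has both legs epic: $\mi(fg)$ is epic because, under the paper's standing assumption, image factorisations are epi-mono, and $\mi f \circ g$ is the composition of two epimorphisms ($\mi f$ by the same fact, $g$ by hypothesis). Hence Lemma~\ref{lemma:epi-triangle} shows that $\phi$ is also epic.

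Finally, since $\cat{C}$ is balanced (Definition~\ref{def:balanced}), $\phi$ is an isomorphism, so $\imageObj(fg)\cong \imageObj f$. The only delicate point in this plan is the justification that $\mi f$ and $\mi(fg)$ are epimorphisms; this rests entirely on the epi-mono property of image factorisations in the ambient setting, which the paper has already invoked in the paragraph preceding Lemma~\ref{lemma:epi-triangle}.
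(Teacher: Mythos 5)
Your proposal is correct and matches the paper's own proof essentially step for step: both construct the canonical comparison morphism from the universal property of the image, deduce monicity and epicity from the two commuting triangles via Lemma~\ref{lemma:epi-triangle} and Paragraph~\ref{para:mono-triangle} (using the standing epi--mono assumption on image factorisations, which you rightly flag), and conclude by balancedness. The only (immaterial) difference is the direction of the comparison map: you build $\phi\colon\imageObj(fg)\to\imageObj f$, which is in fact the direction that the initiality clause of Definition~\ref{def:image-factorization} directly yields, whereas the paper's diagram draws its $q$ the other way.
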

\begin{proof}
    By the universal property of images, we get a unique map $q\colon\imageObj f\rightarrow\imageObj f\circ g$ making the following commute.
% https://q.uiver.app/#q=WzAsNSxbMCwwLCJBIl0sWzEsMCwiQiJdLFszLDAsIkMiXSxbMiwyLCJcXGltYWdlT2Jqe2ZcXGNpcmMgZ30iXSxbMiwxLCJcXGltYWdlT2JqIGYiXSxbMSwyLCJmIl0sWzAsMSwiZyIsMCx7InN0eWxlIjp7ImhlYWQiOnsibmFtZSI6ImVwaSJ9fX1dLFswLDMsIlxcbWkgZlxcY2lyYyBnIiwyLHsic3R5bGUiOnsiaGVhZCI6eyJuYW1lIjoiZXBpIn19fV0sWzMsMiwiXFxpbSBmXFxjaXJjIGciLDIseyJzdHlsZSI6eyJ0YWlsIjp7Im5hbWUiOiJob29rIiwic2lkZSI6InRvcCJ9fX1dLFsxLDQsIlxcbWkgZiIsMix7InN0eWxlIjp7ImhlYWQiOnsibmFtZSI6ImVwaSJ9fX1dLFs0LDIsIlxcaW0gZiIsMCx7InN0eWxlIjp7InRhaWwiOnsibmFtZSI6Imhvb2siLCJzaWRlIjoidG9wIn19fV0sWzQsMywiXFxleGlzdHMgISBxIiwxLHsic3R5bGUiOnsiYm9keSI6eyJuYW1lIjoiZGFzaGVkIn19fV1d
\[\begin{tikzcd}
	A & B && C \\
	&& {\imageObj f} \\
	&& {\imageObj{f\circ g}}
	\arrow["g", two heads, from=1-1, to=1-2]
	\arrow["{\mi f\circ g}"', two heads, from=1-1, to=3-3]
	\arrow["f", from=1-2, to=1-4]
	\arrow["{\mi f}"', two heads, from=1-2, to=2-3]
	\arrow["{\im f}", hook, from=2-3, to=1-4]
	\arrow["{\exists ! q}"{description}, dashed, from=2-3, to=3-3]
	\arrow["{\im f\circ g}"', hook, from=3-3, to=1-4]
\end{tikzcd}\]
    Applying Results~\ref{lemma:epi-triangle}, \ref{para:mono-triangle}, yields that $q$ is an isomorphism.
\end{proof}

\body{
Throughout the rest of the article we will be always working with categories which we call $\mathcal{J}$-strong. These include all toposes and hence all of the combinatorial examples we mentioned earlier. 
}

\begin{definition}\label{def: mono-strong}
    Let $\mathcal{J}$ be a class of diagrams in a category $\cat{C}$, we say that $\cat{C}$ is \define{$\mathcal{J}$-strong} if the following conditions are satisfied: 
    \begin{enumerate}
        \item $\cat{C}$ is balanced,
        \item $\cat{C}$ has pullbacks and equalizers and
        \item $\cat{C}$ has pullback-stable colimits of type $\mathcal{J}$.
    \end{enumerate}
    We say that $\cat{C}$ is \define{mono-strong} if $\mathcal{J}$ is the class of all monic diagrams (i.e. diagrams whose image consists only of monomorphisms).
\end{definition}

\body{
Notice that, by having all equalizers, \define{$\mathcal{J}$-strong} categories always admit epi-mono-factorizations. 
}

\section{The category of lassos}
\label{sec:cat_of_lassos}
\body{
To study the space of possible lassos on a given category, we introduce the following notion.
}
\begin{definition}[The category of lassos]
    Let $\cat{C}$ be a category. The \define{category of lassos on $\cat{C}$} -- denoted $\cat{Lasso}(\cat{C})$ -- has as objects the lassos on $\cat{C}$ and its morphisms $f:(\La,\eta)\rightarrow(\La',\eta')$ are given by collections of epimorphisms\footnote{Here we specify that the maps $f_A$ have to be epimorphisms but observe that, by Lemma~\ref{lemma:epi-triangle}, this is already guaranteed.} $(f_A \colon \La A \twoheadrightarrow \La' A)_{A\in\cat{C}}$ indexed by the objects of $\cat{C}$ making the following triangle commute.
    % https://q.uiver.app/#q=WzAsMyxbMSwwLCJBIl0sWzAsMSwiXFxMYSBBIl0sWzIsMSwiXFxMYSdBIl0sWzAsMSwiXFxldGFfQSIsMCx7InN0eWxlIjp7ImhlYWQiOnsibmFtZSI6ImVwaSJ9fX1dLFswLDIsIlxcZXRhJ19BIiwyLHsic3R5bGUiOnsiaGVhZCI6eyJuYW1lIjoiZXBpIn19fV0sWzEsMiwiZl9BIiwyLHsic3R5bGUiOnsiaGVhZCI6eyJuYW1lIjoiZXBpIn19fV1d
\[\begin{tikzcd}
	& A \\
	{\La A} && {\La'A}
	\arrow["{\eta_A}", two heads, from=1-2, to=2-1]
	\arrow["{\eta'_A}"', two heads, from=1-2, to=2-3]
	\arrow["{f_A}"', two heads, from=2-1, to=2-3]
\end{tikzcd}\]
    The composition of morphisms is given by the composition of the underlying maps.
\end{definition}

\body{
    Let $(\La,\eta)$ and $(\La',\eta')$ be lassos on some category $\cat{C}$. Let $\eta'\circ \eta\colon\id_\cat{C}\Rightarrow\La'\circ\La$ be the vertical composition of $\eta'$ after $\eta$. That is, elementwise we have $(\eta'\circ\eta)_X=(\eta')_{\La X}\circ(\eta)_X$. It is easy to check that the pair $(\La'\circ\La,\eta'\circ\eta)$ is a lasso on $\cat{C}$. In this context, we write $(\La',\eta')\circ(\La,\eta):=(\La'\circ\La,\eta'\circ\eta)$ and call it the \define{composition} of $(\La,\eta)$ and $(\La',\eta')$. 
}

\body{
    Observe that the collection $\eta_{\La X}$ is a morphism in the category of lassos from $(\La,\eta)$ to $(\La',\eta')\circ(\La,\eta)$. Further observe that the trivial lasso is the identity with respect to the composition of lassos. In Appendix~\ref{appendix:double}, we prove that the category of lassos can be viewed as a single object double category.
}

\body{
    It is easy to verify that, for a given lasso $(\La,\eta)$, the collection $\eta_X$ is the unique morphism from the trivial lasso to $(\La,\eta)$ in the category of lassos. Hence the trivial lasso is the initial object in the category of lassos.
}

\subsection{The special case of Graphs: canonicity of contractions.}

\body{
In this section, we will see that there is exactly one nontrivial lasso on $\cat{Grph}$. Firstly, we prove a few results that let us map out the category of lassos.
}

\begin{proposition}\label{prop:cc_coco}
There is a cocontinuous functor $\cat{cc} \colon \cat{Grph} \to \cat{Grph}$ which maps any graph $G$ to the graph $\cat{cc}(G)$ obtained by quotienting the vertex set of $G$ by the connectivity relation.
\end{proposition}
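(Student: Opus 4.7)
The plan is to factor \(\cat{cc}\) as a composite of two left adjoints, whereupon cocontinuity is automatic. I would define
\[
\cat{cc} := I \circ \pi_0 \colon \cat{Grph} \xrightarrow{\pi_0} \cat{Set} \xrightarrow{I} \cat{Grph},
\]
where \(\pi_0\) is the colimit functor on the presheaf category \(\cat{Grph} = \cat{Set}^{\cat{SGr}}\), and \(I\) is the \emph{reflexive discrete graph} functor sending a set \(X\) to the graph with vertex set \(X\), edge set \(X\), and both source and target maps equal to \(\id_X\).

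First, I would check that this composite does what the statement claims. Since \(\cat{SGr}\) consists of two objects with parallel arrows \(s, t \colon E \to V\), the colimit of \(G \colon \cat{SGr} \to \cat{Set}\) is the coequalizer of \(G(s)\) and \(G(t)\), which is precisely the set \(G(V)\) quotiented by the equivalence relation generated by \(G(s)(e) \sim G(t)(e)\) for every edge \(e \in G(E)\)---i.e.\ the connectivity relation. Applying \(I\) afterwards yields a reflexive discrete graph whose vertex set is this quotient, matching the description in the proposition.

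Next I would establish cocontinuity by showing each factor is a left adjoint. The key observation is that \(I\) coincides with the diagonal functor \(\Delta \colon \cat{Set} \to \cat{Set}^{\cat{SGr}}\) sending a set \(X\) to the constant functor at \(X\); the standard colimit/diagonal adjunction in any presheaf category then gives \(\pi_0 \dashv I\). For the outer factor, I would verify that \(I \dashv L\), where \(L \colon \cat{Grph} \to \cat{Set}\) sends a graph \(G\) to its set of self-loops \(\{e \in G(E) : G(s)(e) = G(t)(e)\}\): a graph homomorphism \(I(X) \to G\) consists of a pair of functions \(X \to G(V)\) and \(X \to G(E)\) compatible with source and target, and since both of these structure maps on \(I(X)\) are \(\id_X\), such a datum collapses to a single function \(X \to L(G)\), naturally in both arguments.

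Because composites of left adjoints are left adjoints and left adjoints are cocontinuous, \(\cat{cc} = I \circ \pi_0\) preserves all colimits. The only step with any real content is the hom-set calculation establishing \(I \dashv L\); the identification of \(\pi_0\) with the connected-components functor and of \(I\) with the diagonal is structural bookkeeping about \(\cat{Set}^{\cat{SGr}}\), so I anticipate no serious obstacles.
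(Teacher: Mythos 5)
There is a genuine gap here, and it is not in the adjunction bookkeeping (which is all correct: $\pi_0\dashv\Delta=I$ and $I\dashv L$ both check out, so $I\circ\pi_0$ really is cocontinuous). The problem is that $I\circ\pi_0$ is not the functor $\cat{cc}$ of the proposition. The statement asks for the graph obtained by quotienting the \emph{vertex set} of $G$ by connectivity; the paper's construction (and everything downstream of it) keeps the edge set of $\cat{cc}(G)$ equal to $G(E)$, with every edge becoming a loop at its component's representative. Your composite instead replaces the edge set by $\pi_0(G)$, one loop per component. These differ already on $\twoloop$ (the one-vertex, two-loop graph): the intended $\cat{cc}(\twoloop)$ is $\twoloop$ itself, while $I(\pi_0(\twoloop))$ is the terminal graph. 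The discrepancy is fatal for the role this proposition plays in the paper: the canonical map $G\to I(\pi_0(G))$ fails to be an epimorphism whenever $G$ has an edgeless component (the new loop there is not in the image), so $(I\circ\pi_0,\,\pi)$ violates condition (L2) and is not a lasso at all; and since it identifies edges, it would contradict Lemma~\ref{lemma:cant_identify_edges}, which forces every lasso on $\cat{Grph}$ to be edge-trivial. So you have proved cocontinuity of a different, unusable functor.

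The good news is that your strategy -- exhibit a right adjoint rather than check coproducts and coequalizers by hand as the paper does -- can be salvaged for the correct $\cat{cc}$, and arguably gives a cleaner proof. Define $R\colon\cat{Grph}\to\cat{Grph}$ by $R(H)(V)=H(V)$ and $R(H)(E)=\{e\in H(E): H(s)(e)=H(t)(e)\}$, i.e.\ the subgraph of $H$ on all vertices but only the loop edges. A homomorphism $G\to R(H)$ is a pair $(\phi,\psi)$ with every $\psi(e)$ a loop based at $\phi(G(s)(e))=\phi(G(t)(e))$; the latter condition forces $\phi$ to be constant on connected components, so it factors uniquely through $\pi_0(G)$, and one gets a natural bijection $\homset(G,R(H))\cong\homset(\cat{cc}(G),H)$ where now $\cat{cc}(G)$ has vertex set $\pi_0(G)$ and edge set $G(E)$ as intended. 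Hence the correct $\cat{cc}$ is a left adjoint and in particular cocontinuous. If you rewrite your argument around this adjunction, you will have a complete proof that is genuinely different from (and tighter than) the paper's direct verification.
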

\body{\label{para:coequaliser_dfn}
    In the proof of the above proposition, we require the coequaliser of graphs. A \define{coequaliser} is the colimit of a diagram of the form
    % https://q.uiver.app/#q=WzAsMixbMCwwLCJIIl0sWzEsMCwiRyJdLFswLDEsImEiLDAseyJjdXJ2ZSI6LTF9XSxbMCwxLCJiIiwyLHsiY3VydmUiOjF9XV0=
    \[\begin{tikzcd}
	H & G
	\arrow["a", curve={height=-6pt}, from=1-1, to=1-2]
	\arrow["b"', curve={height=6pt}, from=1-1, to=1-2]
    \end{tikzcd}\]
    For graphs $H$ and $G$, along with a pair of graph homomorphisms $a,b\colon H\to G$, the coequaliser is a graph $C$ obtained from $G$ by the following identifications: vertices~$u$ and~$v$ are identified if they are the images $a(w)$ and $b(w)$ of a single vertex $w$ in $H$. Edges $e$ and $f$ are identified if they are the images $a(g)$ and $b(g)$ of a single edge $g$ in $H$.
}
\begin{proof}[Proof of Proposition~\ref{prop:cc_coco}]
Let $\cat{cc}$ denote the map that sends each graph to the graph obtained by quotienting its vertex set by the connectivity relation. Since graph homomorphisms preserve the connectivity relation, we can extend $\cat{cc}$ to a functor by sending each graph homomorphism $f\colon G\to H$ to the map from $\cat{cc}(G)$ to $\cat{cc}(H)$ that sends each vertex to the the representative of its equivalence class in the image of $f$. This map $\cat{cc}(f)$ acts on the edge set of $\cat{cc}(G)$, which is the same as the edge set of $G$, identically to $f$. Given a graph $G$, let $\pi_G:G\rightarrow \cat{cc}(G)$ be the map corresponding to the `action of $\cat{cc}$'. That is, the map sending each vertex to its representative in the quotient and acting as the identity on edges. Observe that the maps $\pi_G$ assemble into a natural transformation $\pi\colon\id_\cat{Grph}\Rightarrow\cat{cc}$.

We are required to show that $\cat{cc}$ preserves all coproducts and coequalisers. For coproducts, this follows immediately because they cannot change the connectivity relation. For coequalisers, let $a,b:H\rightarrow G$ be an arbitrary pair of graph homomorphisms. Consider the diagram $\cat{cc}(a),\cat{cc}(b)\colon\cat{cc}(H)\to\cat{cc}(G)$. Observe that if a pair of vertices $u$ and $v$ in $G$ are the images $a(w)$ and $b(w)$ of a single vertex in $H$, then $\pi_G(u)$ and $\pi_G(v)$ in $\cat{cc}(G)$ are the images of $\pi_G(a(w))$ and $\pi_G(b(w))$, which, by the naturality of $\pi$, are in turn equal to $\cat{cc}(a)(\pi_H(w))$ and $\cat{cc}(b)(\pi_H(w))$, respectively. Hence the identification of $u$ and $v$ in the coequaliser of $a,b:H\rightarrow G$ implies the identification of $\pi_G(u)$ and $\pi_G(v)$ in the image of the coequaliser under $\cat{cc}$. For the converse, suppose that $\pi_G(x)$ and $\pi_G(y)$ are the images of a single vertex $\pi_G(z)$ (for some choice of representative $z$ from $H$). Then there are vertices $x',y'$ and $z'$ in the connected components containing $x,y$ and $z$, respectively, so that $z'=a(x')=b(y')$. Since graph homomorphisms preserve the connectivity relation, we get that there is one component of the coequaliser of $a,b:H\rightarrow G$ containing the vertices $a(x)$ and $b(y)$. Hence these have the same image in the image of the coequaliser under $\cat{cc}$. This completes our claim that $\cat{cc}$ preserves coequalisers.
\end{proof}

\body{
We call the lasso defined by the above the \define{connected components lasso} on $\cat{Grph}$ and we denote its associated natural transformation by $\pi\colon\id_{\cat{Grph}}\Rightarrow\cat{cc}$.
}

\begin{figure}
    \centering
    \includegraphics[width=.5\textwidth]{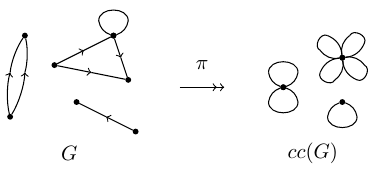}
    \caption{An illustration of the action of the connected components lasso on a particular graph $G$}
    \label{fig:illustration_of_lasso}
\end{figure}

\begin{lemma}\label{lemma:condition_for_double_loop_particle}
    Let $\Fa$ be a functor from $\cat{Grph}$ to $\cat{Grph}$ along with a natural transformation $\alpha:\cat{id}_{\cat{Grph}}\Rightarrow \Fa$ whose components are epic. Let $\twoloop$ be the graph with one vertex and two loops. If for some graph $G$, we have that $\alpha_G$ identifies a pair of edges in $G$, then $\alpha_{\twoloop}$ maps $\twoloop$ to the terminal graph\footnote{The terminal graph is the terminal object in $\cat{Grph}$, consisting of one vertex with a single loop.}.
\end{lemma}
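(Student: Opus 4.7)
The plan is to exhibit a convenient graph homomorphism $f\colon G\to\twoloop$ and then exploit naturality of $\alpha$ to transfer the identification from $\alpha_G$ to $\alpha_{\twoloop}$.

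First, I would fix the two edges $e_1\neq e_2$ of $G$ that are identified by $\alpha_G$, and let $\ell_1,\ell_2$ denote the two loops of $\twoloop$ at its unique vertex $v$. Because $\twoloop$ has only one vertex, the source/target constraints for a graph homomorphism out of $G$ are automatically satisfied on vertices, so one can define a graph homomorphism $f\colon G\to\twoloop$ by sending every vertex of $G$ to $v$, sending $e_1\mapsto \ell_1$, $e_2\mapsto \ell_2$, and sending all remaining edges of $G$ to (say) $\ell_1$. This $f$ is a genuine homomorphism for free since all endpoint conditions collapse to $v=v$.

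Next I would invoke the naturality square
\[
\begin{tikzcd}
G \arrow[r,"f"]\arrow[d,"\alpha_G"'] & \twoloop \arrow[d,"\alpha_{\twoloop}"]\\
\Fa(G) \arrow[r,"\Fa(f)"'] & \Fa(\twoloop)
\end{tikzcd}
\]
Evaluating at $e_1$ gives $\alpha_{\twoloop}(\ell_1)=\Fa(f)(\alpha_G(e_1))$, and evaluating at $e_2$ gives $\alpha_{\twoloop}(\ell_2)=\Fa(f)(\alpha_G(e_2))$. Since $\alpha_G(e_1)=\alpha_G(e_2)$ by hypothesis, the two right-hand sides coincide, so $\alpha_{\twoloop}(\ell_1)=\alpha_{\twoloop}(\ell_2)$.

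Finally I would conclude by using that $\alpha_{\twoloop}$ is epic, hence surjective on both vertices and edges (epimorphisms in $\cat{Grph}=\cat{Set}^{\cat{SGr}}$ are computed pointwise). Surjectivity on vertices forces $\Fa(\twoloop)$ to have a single vertex $\alpha_{\twoloop}(v)$; surjectivity on edges together with the equality $\alpha_{\twoloop}(\ell_1)=\alpha_{\twoloop}(\ell_2)$ forces $\Fa(\twoloop)$ to have a single edge, which must be a loop at that single vertex. Therefore $\Fa(\twoloop)$ is the terminal graph, as claimed. The only place requiring any thought is constructing $f$, and even that is immediate because $\twoloop$'s unique vertex trivializes the endpoint conditions; everything else is a one-line application of naturality.
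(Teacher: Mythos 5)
Your proof is correct and follows the same route as the paper: construct a homomorphism $G\to\twoloop$ collapsing all vertices and separating the two identified edges into the two loops, then combine naturality of $\alpha$ with pointwise surjectivity of the epic component $\alpha_{\twoloop}$ to force $\Fa(\twoloop)$ to be terminal. You simply spell out the naturality-square computation that the paper leaves implicit.
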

\begin{proof}
Suppose there is a graph $G$ containing a pair of edges $e$ and $e'$ such that $e$ and $e'$ are identified by $\alpha_G\colon G\to\Fa(G)$. Then let $f \colon G \to \twoloop$ denote a homomorphism which identifies all vertices in $G$ and maps $e$ and $e'$ to different loops in $\twoloop$. By the functoriality of $\Fa$ and the fact that the components of $\alpha$ are epic, we have that $\Fa(\twoloop)$ is the terminal graph. This situation is depicted in Figure~\ref{fig:loop_particle}.
\begin{figure}[h]
    \centering
    \includegraphics[width=.5\textwidth]{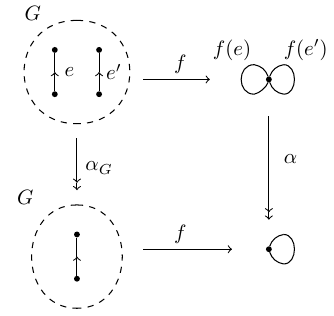}
    \caption{A depiction of the proof of Lemma~\ref{lemma:condition_for_double_loop_particle}}
    \label{fig:loop_particle}
\end{figure}
\end{proof}
\begin{lemma}\label{lemma:oh_no}
    Let $\Fa$ be a functor from $\cat{Grph}$ to $\cat{Grph}$ and suppose there is a natural transformation $\alpha:\cat{id}_{\cat{Grph}}\Rightarrow \Fa$ whose components are epic. Let $\twoloop$ be the graph with one vertex and two loops. If $F$ maps $\twoloop$ to the terminal graph, then $F$ does not preserve monic pushouts.
\end{lemma}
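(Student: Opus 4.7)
The plan is to exhibit one explicit monic pushout that $\Fa$ fails to preserve. Let $V$ denote the graph with a single vertex and no edges, and let $\iota\colon V\hookrightarrow \terminalgraph$ be the (unique) vertex inclusion; this is clearly a monomorphism. I would first observe that $\twoloop$ is the pushout of the monic span
\[\terminalgraph \hookleftarrow V \hookrightarrow \terminalgraph\]
whose two legs are both $\iota$. Indeed, since $\cat{Grph}=\cat{Set}^{\cat{SGr}}$ is a presheaf category, this pushout is computed vertex-wise and edge-wise: on vertices $\{*\}\sqcup_{\{*\}}\{*\}=\{*\}$, and on edges $\{\ell_1\}\sqcup_{\emptyset}\{\ell_2\}=\{\ell_1,\ell_2\}$, which is exactly $\twoloop$.

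Next, I would pin down $\Fa$ on the two graphs in that span. Because $\cat{Grph}$ is a presheaf topos, its epimorphisms are precisely the graph homomorphisms that are surjective on both vertices and edges. Applying this to the epic components $\alpha_V$ and $\alpha_\terminalgraph$, and using that $V$ has one vertex and no edges while $\terminalgraph$ has one vertex and one loop, one immediately concludes $\Fa(V)=V$ and $\Fa(\terminalgraph)=\terminalgraph$. Since both $V$ and $\terminalgraph$ have a unique endomorphism, these two components $\alpha_V$ and $\alpha_\terminalgraph$ must themselves be identities. Naturality of $\alpha$ at $\iota$ then forces $\Fa(\iota)=\iota$, so the $\Fa$-image of the chosen span is literally the same span $\terminalgraph \hookleftarrow V \hookrightarrow \terminalgraph$, whose pushout is again $\twoloop$. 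On the other hand, by hypothesis $\Fa(\twoloop)=\terminalgraph\not\cong\twoloop$, so $\Fa$ cannot preserve this monic pushout.

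The only real -- and very mild -- obstacle is justifying the identifications $\Fa(V)=V$ and $\Fa(\terminalgraph)=\terminalgraph$ from the epimorphism hypothesis on $\alpha$. This is simply the standard description of epimorphisms in a presheaf topos as pointwise surjections together with the trivial cardinality arithmetic of the vertex and edge sets of $V$ and $\terminalgraph$. Once these two values are settled, naturality of $\alpha$ computes $\Fa$ on the rest of the span essentially for free, and the failure of pushout-preservation follows by direct comparison with $\Fa(\twoloop)=\terminalgraph$.
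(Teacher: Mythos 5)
Your proof is correct and follows essentially the same route as the paper: exhibit $\twoloop$ as the monic pushout of $\terminalgraph \hookleftarrow V \hookrightarrow \terminalgraph$, use the epicness of $\alpha_V$ and $\alpha_{\terminalgraph}$ to force $\Fa$ to fix that span, and compare the resulting pushout $\twoloop$ with $\Fa(\twoloop)=\terminalgraph$. Your write-up just spells out in more detail the step the paper compresses into ``the diagram $d$ is unchanged under $F$.''
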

\begin{proof}
    Let $d$ be the diagram consisting of the monic span of two single vertices with loops over a vertex with no loop, as in Figure~\ref{fig:not_right_colim_1}. Observe that $\twoloop$ is the colimit of $d$. That is, a monic pushout. Further observe that, since the components of $\alpha$ are epic, the diagram $d$ is unchanged under $F$. Hence the colimit of $F(d)$ is $\twoloop$, which is not $F(\twoloop)$.
    \begin{figure}[h]
    \centering
    \includegraphics[width=.7\textwidth]{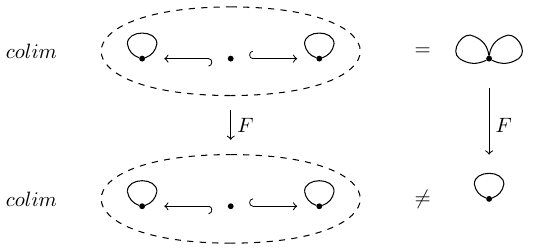}
    \caption{A depiction of the counterexample from Lemma~\ref{lemma:oh_no}}
    \label{fig:not_right_colim_1}
\end{figure}
\end{proof}
\body{
We say that a graph homomorphism is \define{vertex-trivial} if its vertex map is a bijection. We say that a lasso is \define{vertex-trivial} if all of the components of its natural transformation are vertex-trivial. We define \define{edge-triviality} analogously.
}

\begin{lemma}\label{lemma:cant_identify_edges}
    All lassos on $\cat{Grph}$ are edge-trivial.
\end{lemma}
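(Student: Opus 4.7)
My plan is to proceed by contraposition using the two preceding lemmas as the main engines. Suppose that $(\La,\eta)$ is a lasso on $\cat{Grph}$ that is \emph{not} edge-trivial. Then there exists some graph $G$ such that the edge component of $\eta_G \colon G \twoheadrightarrow \La G$ is not a bijection. Since $\cat{Grph}$ is a presheaf category, epimorphisms are precisely the componentwise surjections; hence the edge component of $\eta_G$ is surjective. Failing to be a bijection then forces $\eta_G$ to identify some pair of distinct edges $e \neq e'$ of $G$.

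Having produced such a $G$, I would invoke Lemma~\ref{lemma:condition_for_double_loop_particle} (applied to $\Fa := \La$ and $\alpha := \eta$, whose components are epic by condition~\ref{cond:lasso-2}) to conclude that $\eta_{\twoloop} \colon \twoloop \twoheadrightarrow \La(\twoloop)$ maps $\twoloop$ onto the terminal graph, i.e.\ $\La(\twoloop) \cong \terminalgraph$. Then I would immediately apply Lemma~\ref{lemma:oh_no} to conclude that $\La$ does not preserve monic pushouts, directly contradicting condition~\ref{cond:lasso-1} of the definition of a lasso. The contradiction forces every lasso on $\cat{Grph}$ to be edge-trivial.

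There is no real obstacle in this argument: both the identification step and the failure-of-pushout-preservation step have been isolated in the two prior lemmas. The only point that deserves a line of justification is the claim that epimorphisms in $\cat{Grph} = \cat{Set}^{\cat{SGr}}$ are componentwise surjective, which follows from the standard fact that (co)limits and epi/mono factorizations in a presheaf category are computed pointwise.
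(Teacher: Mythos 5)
Your proposal is correct and follows essentially the same route as the paper: negate edge-triviality to obtain a component $\eta_G$ identifying two distinct edges, then chain Lemma~\ref{lemma:condition_for_double_loop_particle} and Lemma~\ref{lemma:oh_no} to contradict the preservation of monic pushouts. The only addition is your explicit justification that epimorphisms in the presheaf category $\cat{Grph}$ are componentwise surjective, a point the paper leaves implicit.
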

\begin{proof}
Let $(\La,\eta)$ be a lasso on $\cat{Grph}$ and suppose for a contradiction that for some graph $G$, the map $\eta_G$ identifies a pair of edges in $G$. By combining Lemma~\ref{lemma:condition_for_double_loop_particle} and Lemma~\ref{lemma:oh_no}, we get that $\La$ does not preserve monic pushouts, a contradiction.
\end{proof}

\begin{corollary}\label{cor:must_identify_vertices}
    The trivial lasso is the unique vertex-trivial lasso on $\cat{Grph}$. \qed
\end{corollary}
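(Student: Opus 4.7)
The plan is to combine vertex-triviality with edge-triviality (which is automatic by Lemma~\ref{lemma:cant_identify_edges}) to deduce that every component of the natural transformation of a vertex-trivial lasso is a graph isomorphism, and then observe that this promotes the natural transformation itself to an isomorphism in $\cat{Lasso}(\cat{Grph})$ from the trivial lasso.

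First I would fix an arbitrary vertex-trivial lasso $(\La, \eta)$ on $\cat{Grph}$. By hypothesis, every $\eta_G$ is a bijection on vertex sets; by Lemma~\ref{lemma:cant_identify_edges}, every lasso on $\cat{Grph}$ is edge-trivial, so every $\eta_G$ is also a bijection on edge sets. Since $\cat{Grph} = \cat{Set}^{\cat{SGr}}$ and isomorphisms in functor categories are detected pointwise, a graph homomorphism whose vertex and edge maps are both bijections admits a componentwise inverse which is automatically natural in the source/target maps of $\cat{SGr}$. Hence each $\eta_G$ is a graph isomorphism, and so $\eta \colon \id_{\cat{Grph}} \Rightarrow \La$ is a natural isomorphism.

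Next I would unpack what a morphism in $\cat{Lasso}(\cat{Grph})$ out of the trivial lasso $(\id_{\cat{Grph}}, \id_{\id_{\cat{Grph}}})$ amounts to: it is a family $(f_G)_{G\in\cat{Grph}}$ of epimorphisms making the triangle
\[
\begin{tikzcd}
	& G \\
	G && {\La G}
	\arrow["{\id_G}", two heads, from=1-2, to=2-1]
	\arrow["{\eta_G}"', two heads, from=1-2, to=2-3]
	\arrow["{f_G}"', two heads, from=2-1, to=2-3]
\end{tikzcd}
\]
commute. Commutativity forces $f_G = \eta_G$, so $\eta$ itself is the unique such morphism. Because each $\eta_G$ is a graph isomorphism (by the previous paragraph), this morphism is an isomorphism in $\cat{Lasso}(\cat{Grph})$, so $(\La,\eta)$ is (uniquely) isomorphic to the trivial lasso.

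I do not expect a serious obstacle here: the substantive work is already contained in Lemma~\ref{lemma:cant_identify_edges}, which prevents $\eta$ from collapsing edges, and the remaining argument is a direct application of the fact that $\cat{Grph}$ is a presheaf category so that pointwise bijections are isomorphisms. The only small subtlety worth highlighting in the write-up is that ``unique'' is to be read up to unique isomorphism in $\cat{Lasso}(\cat{Grph})$, since a vertex-trivial $\La$ need not equal $\id_{\cat{Grph}}$ on the nose, only be naturally isomorphic to it via the canonical witness $\eta$.
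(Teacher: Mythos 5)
Your proposal is correct and follows essentially the same route the paper intends: the corollary is stated with no written proof precisely because it is meant to follow immediately from Lemma~\ref{lemma:cant_identify_edges}, exactly as you argue (vertex-triviality plus edge-triviality makes every component $\eta_G$ a pointwise bijection, hence an isomorphism in the presheaf category $\cat{Grph}$, so $(\La,\eta)$ is the trivial lasso up to the canonical isomorphism $\eta$). Your closing remark that ``unique'' should be read up to isomorphism in the category of lassos is a reasonable clarification of what the paper leaves implicit.
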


\begin{lemma}\label{lemma:contraction_terminal}
    The connected components lasso $(\cat{cc},\pi)$ is the terminal object in the category of lassos on $\cat{Grph}$.
\end{lemma}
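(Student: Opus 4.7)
The plan is to show that for every lasso $(\La,\eta)$ on $\cat{Grph}$ there is a unique morphism of lassos $(\La,\eta) \to (\cat{cc},\pi)$. Uniqueness is immediate: any such morphism is a family of arrows $(f_A)$ satisfying $f_A \circ \eta_A = \pi_A$, and since $\eta_A$ is epic this equation determines $f_A$; epi-ness of $f_A$ is then automatic by Lemma~\ref{lemma:epi-triangle}, and naturality of $(f_A)$ in $A$ is verified by precomposing the candidate naturality square with the epic $\eta$ and using naturality of $\eta$ and $\pi$.

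The substantive content is existence. By the commuting triangle, it suffices to show that for every graph $A$ the equivalence relation on $V(A) \sqcup E(A)$ induced by $\eta_A$ refines the one induced by $\pi_A$. Edges cause no trouble by Lemma~\ref{lemma:cant_identify_edges}, so what remains is the key claim: whenever $x, y \in V(A)$ satisfy $\eta_A(x) = \eta_A(y)$, the vertices $x$ and $y$ lie in the same connected component of $A$.

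I would prove this claim by contradiction using the two-vertex discrete graph $\twodiscrete$. Supposing $x, y$ lie in distinct components, build a graph homomorphism $\varphi \colon A \to \twodiscrete$ sending the component of $x$ to one vertex and the rest of $A$ to the other; since there are no edges across this partition, $\varphi$ is well-defined. Naturality of $\eta$ at $\varphi$ then forces $\eta_{\twodiscrete}$ to identify the two vertices of $\twodiscrete$. On the other hand, $\twodiscrete$ is the pushout of a monic span from $\emptyset$ into two copies of the one-vertex graph, and $\La$ preserves monic pushouts (Definition~\ref{def:lasso}): edge-triviality together with $\eta$ being epic at the one-vertex graph forces $\La$ to fix it, while $\eta_\emptyset$ being epic out of the initial object forces $\La(\emptyset)$ to be subterminal, which in $\cat{Grph}$ means $\La(\emptyset) = \emptyset$. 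Hence $\La(\twodiscrete) = \twodiscrete$, so $\eta_{\twodiscrete}$ is an epic endomorphism of $\twodiscrete$ that nonetheless identifies both its vertices -- impossible.

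The main obstacle is recognizing $\twodiscrete$ as a monic pushout and pinning down $\La(\emptyset) = \emptyset$; once those are in place, the contradiction at $\twodiscrete$ closes the key claim, and both existence and uniqueness of the morphism to $(\cat{cc},\pi)$ fall out from routine diagram chases using naturality and the epic-ness of $\eta$.
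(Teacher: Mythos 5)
Your proof is correct and follows essentially the same route as the paper's: uniqueness and existence reduce to a kernel-refinement condition via the epi-ness of $\eta$, and the key step is the contradiction at $\twodiscrete$, realized as the monic pushout (coproduct over the empty graph) of two copies of the terminal graph, exactly as in the paper. The only nitpicks are cosmetic: the cleanest justification for $\La(\emptyset)=\emptyset$ is that epimorphisms in $\cat{Grph}$ are componentwise surjections (rather than an appeal to subterminality), and the two cofactors of the coproduct should be the one-vertex graph \emph{with a loop} so that the pushout really is $\twodiscrete$.
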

\begin{proof}
    Let $(\La,\eta)$ be a lasso on $\cat{Grph}$. We are required to show that there is a unique map from $(\La,\eta)$ to $(\cat{cc},\pi)$ in the category of lassos on $\cat{Grph}$.
    Suppose first that for every graph $G$ and every pair of vertices $u$ and $v$ in $G$ that lie in distinct connected components, we have that $\eta_G(u)$ and $\eta_G(v)$ are distinct. Then for each vertex $w$ in $\La G$, there is a unique vertex $f_G(w)$ in $\cat{cc}\;G$ so that for all $u\in\eta_G^{-1}(w)$ we have $\pi_G(u)=f_G(w)$. The function $f_G$ extends uniquely to a homomorphism~$\La G\twoheadrightarrow\cat{cc}\; G$ by sending every edge $xy$ to the single loop at the vertex $f_G(x)=f_G(y)$.

    Now suppose for a contradiction that there is some graph $G$ and a pair of vertices $x$ and $y$ in different connected components of $G$ so that $\eta_G$ identifies $x$ and $y$. Let $\twodiscrete$ be the graph with two vertices and a loop at each vertex. Let $h\colon G\rightarrow \twodiscrete$ be a homomorphism sending each of $x$ and $y$ to separate vertices. By the naturality of $\eta$ and the fact that~$\eta_{\twodiscrete}$ is epic, we have that $\La \twodiscrete$ must be a graph consisting of a single vertex. However, $\twodiscrete$ is the coproduct of two copies of a single vertex with a loop. Since coproducts are monic pushouts over an empty graph, we get a contradiction to the fact that $\La$ preserves monic pushouts.
\end{proof}

\begin{lemma}\label{lemma:basically_cc}
    Suppose that $(\La,\eta)$ is a lasso on $\cat{Grph}$ which is not vertex-trivial. Let $G$ be an arbitrary graph. Then $\La G$ has one vertex for every connected component of $G$ and $\eta_G\colon G\to\La G$ maps each vertex to the representative of its component. Furthermore, $\La G$ has no edges between distinct vertices.
\end{lemma}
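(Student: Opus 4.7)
The plan is to show that under the non-vertex-triviality hypothesis, $\eta_G$ identifies two vertices of $G$ precisely when they lie in the same connected component, and then to deduce the three clauses of the lemma from this. Non-vertex-triviality provides some graph $G_0$ and vertices $u_0\neq v_0$ with $\eta_{G_0}(u_0)=\eta_{G_0}(v_0)$. As a first movement, I would replay the argument from the proof of Lemma~\ref{lemma:contraction_terminal} to show that $u_0$ and $v_0$ must lie in the same connected component of $G_0$: were they separated, a homomorphism $G_0\to\twodiscrete$ together with naturality and epicness of $\eta_{\twodiscrete}$ would force $\La\twodiscrete$ to consist of a single vertex, contradicting the preservation of the monic pushout $\twodiscrete=T\sqcup T$ combined with the easy fact that $\La T\cong T$.

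The crux is to show that $\eta_{\justanedge}$ identifies the two vertices of the single-edge graph $\justanedge$. I would argue this by contradiction: suppose $\eta_{\justanedge}$ does not, so that it is an isomorphism (epic on two vertices and, by Lemma~\ref{lemma:cant_identify_edges}, bijective on the single edge). The analogous epic/edge-triviality analysis gives $\La V\cong V$ and $\La T\cong T$, where $V$ is the one-vertex, no-edge graph. Every finite graph can be assembled from copies of the three atoms $V$, $\justanedge$ and $T$ by iterated monic pushouts along single vertices: first take the disjoint union of one $V$ per vertex, one $\justanedge$ per non-loop edge, and one $T$ per loop, then iteratively glue pieces at common vertices, each gluing being a monic pushout over $V$. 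Since $\La$ preserves monic pushouts and fixes each atom up to isomorphism, it follows by induction that $\La G\cong G$ for every finite graph $G$; naturality together with the fact that an epic endomorphism of a finite graph is an isomorphism then yields that $\eta_G$ is iso on every finite graph. If $G_0$ is finite, this already contradicts $\eta_{G_0}(u_0)=\eta_{G_0}(v_0)$.

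Given the crux, the remaining claims follow by naturality. For any edge $e$ with endpoints $a,b$ in any graph $G$, the classifying homomorphism $f\colon \justanedge\to G$ yields
\[\eta_G(a)=(\La f)(\eta_{\justanedge}(0))=(\La f)(\eta_{\justanedge}(1))=\eta_G(b).\]
Transitive iteration along paths shows $\eta_G$ identifies any two same-component vertices; combined with the first movement, $\eta_G$ identifies exactly the same-component pairs, whence $\La G$ has one vertex per connected component of $G$ and $\eta_G$ is the canonical component-projection. Finally, since $\eta_G$ is bijective on edges by Lemma~\ref{lemma:cant_identify_edges}, every edge of $\La G$ is the image $\eta_G(e)$ of some edge $e$ of $G$; its endpoints $\eta_G(a)$ and $\eta_G(b)$ coincide by what has just been shown, so $\eta_G(e)$ is a loop and $\La G$ has no edges between distinct vertices.

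The hard part is the monic-pushout decomposition in the second movement: one must take care that each gluing step is indeed a monic pushout, so loops are attached via $V\to T$ and the two endpoints of a non-loop edge are glued via two successive $V\to\justanedge$ pushouts (not a single $V+V\to\justanedge$, which would fail monicity on the $G$-side when the endpoints happen to coincide). The case of an infinite $G_0$ in the contradiction additionally requires establishing that $\La$ sends vertex-inclusions to vertex-injections -- which, together with our identity-on-finites conclusion, would reduce it to the finite case -- and this is expected to follow from a closer analysis of $\La$'s interaction with monomorphisms in the adhesive category $\cat{Grph}$.
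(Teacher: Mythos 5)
Your skeleton agrees with the paper's: everything reduces to showing that $\eta$ collapses the two endpoints of the single-edge graph $\justanedge$, after which naturality along classifying maps $\justanedge\to G$, the no-cross-component-identification argument of Lemma~\ref{lemma:contraction_terminal}, and epicness/edge-triviality finish the proof exactly as you describe. Where you diverge is the crux itself. The paper argues directly: the witness of non-vertex-triviality maps into $\fulltwo$, forcing $\La\fulltwo$ to have a single vertex, and expressing $\fulltwo$ as an explicit monic pushout with $\justanedge$ as a bag then forces $\La\justanedge$ to be terminal. You instead argue by contradiction that if $\eta_{\justanedge}$ were vertex-injective then $\La$ would fix every finite graph. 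That route is viable in principle, but as written it has two genuine gaps.

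First, your monic-pushout assembly of a finite graph from the atoms $V$, $\justanedge$, $T$ is incorrectly described. Attaching a non-loop edge between two \emph{existing} distinct vertices $a\neq b$ cannot be done by ``two successive $V\to\justanedge$ pushouts'': the first pushout leaves a pendant edge with a fresh endpoint, and identifying that fresh vertex with $b$ is a pushout along the non-monic fold map $V+V\to V$, which $\La$ need not preserve (this also blocks closing any cycle). The construction you explicitly reject --- the span $G\hookleftarrow V+V\hookrightarrow\justanedge$ --- is the correct one: both legs are monic precisely because the edge is a non-loop and $a\neq b$, loops being handled separately via $V\hookrightarrow T$ as you say. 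Second, and more seriously, your contradiction only fires when the witness $G_0$ is finite; you defer the infinite case to an unproven claim that $\La$ sends monomorphisms to vertex-injections, which does not obviously follow from preservation of monic pushouts. This gap is closable with a tool you already use: a homomorphism $G_0\to\fulltwo$ separating the two identified vertices transfers, by naturality, the identification to $\eta_{\fulltwo}$, contradicting your conclusion that $\eta$ is an isomorphism on all finite graphs. Without some such reduction the proof is incomplete; the paper's direct route avoids both issues by exhibiting one concrete monic pushout.
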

\begin{proof}
    Let $G$ be a graph so that $\eta_G$ identifies some pair $x$ and $y$ of its vertices.
    Let~$\fulltwo$ be the graph consisting of two vertices, each having a loop, along with an edge in each direction between the two. Then there is a morphism $f\colon G\to \fulltwo$ sending each of $x$ and $y$ to a distinct vertex of $\fulltwo$. By naturality of $\eta$, we conclude that $\La \fulltwo$ is a graph consisting of a single vertex (but its number of loops is yet undetermined).

    Let $\justanedge$ be the graph consisting of a single edge. Observe that $\fulltwo$ can be constructed as the colimit of the following structured decomposition.
    \begin{center}
        \includegraphics[width=.7\textwidth]{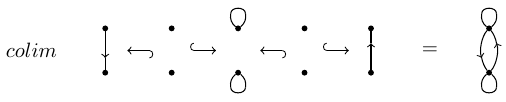}
    \end{center}
    Since $\La$ preserves monic pushouts, and hence preserves structured decompositions, the image of the above diagram under $\Fa$ should have $\La \fulltwo$ as a colimit. By Lemma~\ref{lemma:contraction_terminal}, the functor $\La$ cannot map the graphs having two disconnected vertices to a graph with only one vertex. Hence, in order to produce the right colimit,~$\La \justanedge$ must be the terminal graph.
    
    Now let $G$ be an arbitrary graph and observe that for every edge $e$ of $G$ there is a morphism $f\colon \justanedge\to G$ which maps $\justanedge$ onto $e$. By naturality, we must have that the endvertices of $e$ are identified by $\eta_G$. Since $e$ was chosen arbitrarily, all pairs of endvertices of edges are identified. By Lemma~\ref{lemma:contraction_terminal}, no vertices between connected components are identified by $\eta_G$ and we conclude that $\eta_G$ maps each vertex to a representative of its connected component.
    Finally, we note that there are no edges between vertices of $\La G$ since $\eta_G$ is epic and this would imply edges between connected components of $G$.
\end{proof}

\body{Now we prove Theorem~\ref{thm:canonicity}, which states that the connected components lasso is the only nontrivial lasso on the category of graphs.}

\body{
\begin{proof}[Proof of Theorem~\ref{thm:canonicity}]
    Let $(\La,\eta)$ be a nontrivial lasso on $\cat{Grph}$. By Lemma~\ref{lemma:cant_identify_edges}, the lasso~$(\La,\eta)$ is edge-trivial but not vertex-trivial. By Lemma~\ref{lemma:basically_cc}, we conclude that~$(\La,\eta)$ is the connected components lasso.
\end{proof}}\label{para:proof-of-thm:canonicity}

\body{
By the above analysis, the category of lassos on $\cat{Grph}$ is simply the following category of two objects.
% https://q.uiver.app/#q=WzAsMixbMSwwLCIoXFxjYXR7Y2N9LFxccGkpIl0sWzAsMCwiKFxcaWQsXFxpZCkiXSxbMSwwXV0=
\[\begin{tikzcd}
	{(\id,\id)} & {(\cat{cc},\pi)}
	\arrow[from=1-1, to=1-2]
\end{tikzcd}\]

\begin{remark}
    The category $\cat{Grph}$ is the category of directed multigraphs. If we restrict our attention to the category of undirected multigraphs, it is easy to see that all of the above results specialize and hence the category of lassos is essentially identical to the above.
\end{remark}

\subsection{Examples of lassos beyond $\cat{Grph}$}

\subsubsection{Reflexive graphs}

\body{
There are two equivalent ways to define the category of reflexive graphs. Firstly, one may take the objects to be the all graphs, but allow the morphisms to map edges to vertices. Secondly, we can modify the schema that defines $\cat{Grph}$ so that every vertex comes with a distinguished loop. We will use this second formulation. The category
% https://q.uiver.app/#q=WzAsMixbMCwwLCJFIl0sWzAsMSwiViJdLFswLDEsInQiLDAseyJjdXJ2ZSI6LTJ9XSxbMCwxLCJzIiwyLHsiY3VydmUiOjJ9XSxbMSwwLCJsIl1d
\[\begin{tikzcd}
	E \\
	V
	\arrow["t", curve={height=-12pt}, from=1-1, to=2-1]
	\arrow["s"', curve={height=12pt}, from=1-1, to=2-1]
	\arrow["l", from=2-1, to=1-1]
\end{tikzcd}\]
which we will denote by $\cat{SRGr}$, is endowed with the equations
\[
    s\circ l=\id_V\text{ and } t\circ l=\id_V.
\]
We abbreviate the category $\mathbf{Set}^{\cat{SRGr}}$ of reflexive graphs by $\cat{RGrph}$. We will see that $\cat{RGrph}$ admits a richer category of lassos than $\cat{Grph}$. Indeed, we obtain the category depicted in Figure~\ref{fig:rgrph_lassos}. Since its explication requires a few pages, we leave the details to Appendix~\ref{appendix:reflexive_graphs}.
}

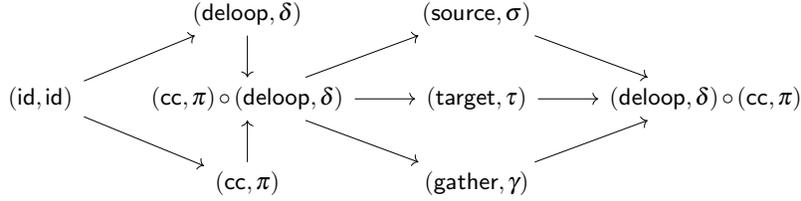
\begin{figure}
    \centering
    {\footnotesize
    % https://q.uiver.app/#q=WzAsOCxbMCwxLCIoXFxpZCxcXGlkKSJdLFsxLDEsIihcXGNhdHtjY30sXFxwaSlcXGNpcmMoXFxjYXR7ZGVsb29wfSxcXGRlbHRhKSJdLFsxLDAsIihcXGNhdHtkZWxvb3B9LFxcZGVsdGEpIl0sWzEsMiwiKFxcY2F0e2NjfSxcXHBpKSJdLFsyLDAsIihcXGNhdHtzb3VyY2V9LFxcc2lnbWEpIl0sWzIsMSwiKFxcY2F0e3RhcmdldH0sXFx0YXUpIl0sWzMsMSwiKFxcY2F0e2RlbG9vcH0sXFxkZWx0YSlcXGNpcmMoXFxjYXR7Y2N9LFxccGkpIl0sWzIsMiwiKFxcY2F0e2dhdGhlcn0sXFxnYW1tYSkiXSxbMCwyXSxbMCwzXSxbMiwxXSxbMywxXSxbMSw0XSxbMSw1XSxbNCw2XSxbNSw2XSxbMSw3XSxbNyw2XV0=
\[\begin{tikzcd}
	& {(\cat{deloop},\delta)} & {(\cat{source},\sigma)} \\
	{(\id,\id)} & {(\cat{cc},\pi)\circ(\cat{deloop},\delta)} & {(\cat{target},\tau)} & {(\cat{deloop},\delta)\circ(\cat{cc},\pi)} \\
	& {(\cat{cc},\pi)} & {(\cat{gather},\gamma)}
	\arrow[from=1-2, to=2-2]
	\arrow[from=1-3, to=2-4]
	\arrow[from=2-1, to=1-2]
	\arrow[from=2-1, to=3-2]
	\arrow[from=2-2, to=1-3]
	\arrow[from=2-2, to=2-3]
	\arrow[from=2-2, to=3-3]
	\arrow[from=2-3, to=2-4]
	\arrow[from=3-2, to=2-2]
	\arrow[from=3-3, to=2-4]
\end{tikzcd}\]
    }
    \caption{The category of lassos on $\cat{RGrph}$}
    \label{fig:rgrph_lassos}
\end{figure}

\begin{remark}
    It is also interesting to mention that, unlike the other lassos we've seen so far, some of the lassos in Figure~\ref{fig:rgrph_lassos} are not idempotent. In particular, there are some lassos for which performing the action of the lasso twice is equivalent to the action of the terminal lasso, and not the original lasso.
\end{remark}

\subsubsection{Edge-coloured graphs}

\body{
Whereas $\cat{Grph}$ and $\cat{RGrph}$ are defined by the schema $\cat{SGr}$ and $\cat{SRGr}$, respectively, we can represent $k$-edge-coloured graphs using following schema. Denote by $\cat{CGr}_k$ the following category.
{\footnotesize
% https://q.uiver.app/#q=WzAsNCxbMCwwLCJFXzEiXSxbMSwwLCJcXGxkb3RzIl0sWzIsMCwiRV9rIl0sWzEsMSwiViJdLFswLDMsInNfMSIsMix7Im9mZnNldCI6Mn1dLFswLDMsInRfMSIsMCx7Im9mZnNldCI6LTF9XSxbMiwzLCJzX2siLDIseyJvZmZzZXQiOjF9XSxbMiwzLCJ0X2siLDAseyJvZmZzZXQiOi0yfV1d
\[\begin{tikzcd}
	{E_1} & \ldots & {E_k} \\
	& V
	\arrow["{s_1}"', shift right=2, from=1-1, to=2-2]
	\arrow["{t_1}", shift left, from=1-1, to=2-2]
	\arrow["{s_k}"', shift right, from=1-3, to=2-2]
	\arrow["{t_k}", shift left=2, from=1-3, to=2-2]
\end{tikzcd}\]
}
Then the objects of $\mathbf{Set}^{\cat{CGr}_k}$ can be interpreted as graphs whose edges recieve one of $k$ colours. The morphisms between them are exactly the graph homomorphisms which preserve the colour of edges.
}

\body{
For each colour $i=1,\ldots,k$, we obtain a lasso $(\La_i,\eta^i)$ which acts on $k$-edge-coloured graphs by identifying vertices which are connected by edges of colour $i$. Given lassos $(\La_i,\eta_i)$ and $(\La_j,\eta_j)$, we obtain a lasso $(\La_{i,j},\eta_{i,j})$ by composing the underlying maps. We remark that it doesn't matter the order in which we do this. The category of lassos on $\mathbf{Set}^{\cat{CGr}_k}$ is generated by composing the basic lassos $(\La_i,\eta_i)$. That is, it is the directed $k$-cube.
}

\begin{example}
    The category of lassos on $\mathbf{Set}^{\cat{CGr}_3}$, the category of $3$-edge-coloured graphs, is the following cube.

{\footnotesize
% https://q.uiver.app/#q=WzAsOCxbMSwwLCIoXFxpZCxcXGlkKSJdLFswLDEsIihcXExhXzEsXFxldGFeMSkiXSxbMSwxLCIoXFxMYV8yLFxcZXRhXjIpIl0sWzIsMSwiKFxcTGFfMyxcXGV0YV4zKSJdLFswLDIsIihcXExhX3sxLDJ9LFxcZXRhXnsxLDJ9KSJdLFsxLDIsIihcXExhX3sxLDN9LFxcZXRhXnsxLDN9KSJdLFsyLDIsIihcXExhX3syLDN9LFxcZXRhXnsyLDN9KSJdLFsxLDMsIihcXExhX3sxLDIsM30sXFxldGFeezEsMiwzfSk9KFxcY2F0e2NjfSxcXHBpKSJdLFswLDFdLFswLDJdLFsxLDVdLFsyLDRdLFsxLDRdLFswLDNdLFsyLDZdLFszLDVdLFszLDZdLFs2LDddLFs1LDddLFs0LDddXQ==
\[\begin{tikzcd}
	& {(\id,\id)} \\
	{(\La_1,\eta^1)} & {(\La_2,\eta^2)} & {(\La_3,\eta^3)} \\
	{(\La_{1,2},\eta^{1,2})} & {(\La_{1,3},\eta^{1,3})} & {(\La_{2,3},\eta^{2,3})} \\
	& {(\La_{1,2,3},\eta^{1,2,3})=(\cat{cc},\pi)}
	\arrow[from=1-2, to=2-1]
	\arrow[from=1-2, to=2-2]
	\arrow[from=1-2, to=2-3]
	\arrow[from=2-1, to=3-1]
	\arrow[from=2-1, to=3-2]
	\arrow[from=2-2, to=3-1]
	\arrow[from=2-2, to=3-3]
	\arrow[from=2-3, to=3-2]
	\arrow[from=2-3, to=3-3]
	\arrow[from=3-1, to=4-2]
	\arrow[from=3-2, to=4-2]
	\arrow[from=3-3, to=4-2]
\end{tikzcd}\]
}

\end{example}

\subsubsection{simplicial sets}
\body{
The category of \define{simplicial sets} extends the category of graphs. We define it to be $\mathbf{Set}^{\Delta^{\op}}$, the category of presheaves over the simplex category $\Delta$ (c.f. \cite[Ex. 1.3.7. pt. vi]{riehl2017category}).
}

\body{
Given $n\ge 1$ and a simplicial set $D$, let $\sim_n$ be the relation on vertices of $D$ given by being contained in the same $n$-dimensional face and let $\approx_n$ be its transitive closure. For a graph $G$, viewed as a simplicial set, the relation $\approx_1$ is simply the connectivity relation on vertices.
}

\body{
For each $n\ge 1$, the relation $\approx_n$ defines a lasso $(\cat{cc}_n,\eta^n)$. Indeed, its action identifies all related vertices. For each pair $i,j$, the action of $\cat{cc}_i$ commutes with the action of $\cat{cc}_j$. Furthermore, lassos with a lower index are strictly stronger than those with a higher index, in the sense that the composition of a pair of such lassos is equal to the one with a lower index. This is shown below. However, we refrain from providing an exhaustive characterization of the category of lassos of simplicial sets (like we have done for $\cat{Grph}$ and $\cat{RGrph}$). Instead this is left as future work.
% https://q.uiver.app/#q=WzAsNSxbNCwwLCIoXFxjYXR7Y2N9XzEsXFxldGFeMSkiXSxbMywwLCIoXFxjYXR7Y2N9XzIsXFxldGFeMikiXSxbMiwwLCIoXFxjYXR7Y2N9XzMsXFxldGFeMykiXSxbMSwwLCJcXGxkb3RzIl0sWzAsMCwiKFxcaWQsXFxpZCkiXSxbNCwzXSxbMywyXSxbMiwxXSxbMSwwXV0=
\[\begin{tikzcd}
	{(\id,\id)} & \ldots & {(\cat{cc}_3,\eta^3)} & {(\cat{cc}_2,\eta^2)} & {(\cat{cc}_1,\eta^1)}
	\arrow[from=1-1, to=1-2]
	\arrow[from=1-2, to=1-3]
	\arrow[from=1-3, to=1-4]
	\arrow[from=1-4, to=1-5]
\end{tikzcd}\]
}

\section{Lifting Contractions to Diagrams}
\label{sec:monotone}
\body{
In this section we will show how to push decompositions forward along lasso contractions. The picture to keep in mind is the following. 
\begin{center}
    \includegraphics[width=.4\textwidth]{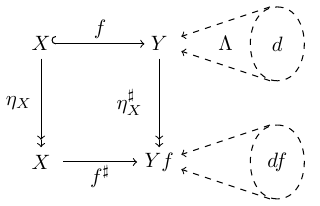}
\end{center}
Here we are given a lasso contraction $Y \twoheadrightarrow Y\contract{f}$ along a subobject $X \hookrightarrow Y$. The goal is to be able to push any structured decomposition $d$ whose colimit is the object $Y$ forward along the contraction map $\eta_X^\#$ (by image factorizations) to obtain a structured decomposition $d\contract{f}$ whose colimit is $Y\contract{f}$.
}

\body{
    It turns out to be slicker to prove something slightly different to our main result. Indeed, we make the following definition of `strong lassos' and work with them for the rest of the section. The proof of our main result can easily be derived from the proofs of the following (as we will explain at the end of this section).
}

\begin{definition}[Strong lasso]\label{def:strong-lasso}
    Let $(\La,\eta)$ be a lasso on a category $\cat{C}$. We say that $(\La,\eta)$ is \define{strong} if the functor $\La$ preserves the colimits of all monic diagrams (on top of preserving monic pushouts).
\end{definition}

\body{\label{para:cant_just_contract}
    One might think it would be possible to build our required diagram simply by contracting each individual object in $d$ along its `intersection' with $X$. However the following example for graphs shows that this is not the case.
    \begin{center}
    \includegraphics[width=.9\textwidth]{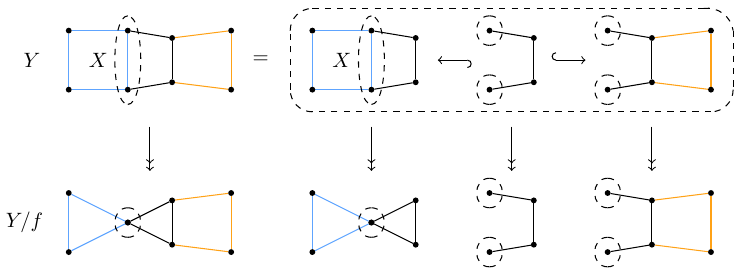}
\end{center}
    Indeed, the diagram of three graphs in the top right builds $Y$ as a colimit, but we cannot assemble the three graphs in the bottom right into a tree decomposition which builds $Y/f$ (indeed note that one no longer has a span of monomorphism). The graphs in the bottom right are obtained by contracting the subgraphs corresponding to $X$ in each of the three top-right graphs. All of this is to say that it is not enough to simply apply the lasso contraction point-wise. Instead we will make use of image factorizations of the global lasso contraction.
}

\body{
We now seek to introduce the notion of a \textit{diagram of images} (Definition~\ref{def:diagram-of-images}). To define it formally, we will first need to establish Lemma~\ref{lemma:images_map} below. In the meantime, as a first pass, we remark that a diagram of images is exactly what it sounds like: it is the diagram $\imageObj(\Omega, d)$ one obtains by point-wise image factorizations of a cocone $\Omega$ sitting above a given diagram $d\colon\cat{J}\to\cat{C}$. This is visualized as follows. %{\color{magenta} we should call $\imageObj \Omega$ something different. At first I thought we could call it $\imageObj(d)$, but ain't good (since the image of $d$ is already a thing). So perhaps we need new notation; how about $\imageObj(\Omega, d)$?} {\color{blue} Yes. Seems good. there are two places we use the notation I believe: here and in the first diagram of this section}
\begin{center}
{\footnotesize
    \includegraphics[width=.7\textwidth]{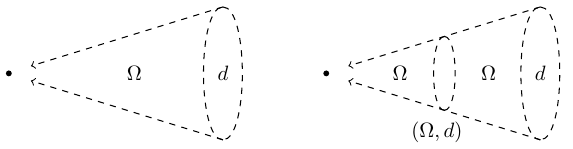}
    }
\end{center}

}\label{para:diagram-of-images-figure}

\begin{lemma}\label{lemma:images_map}
Consider the following commutative diagram in a category $\cat{C}$ which admits all image factorizations.
% https://q.uiver.app/#q=WzAsNCxbMSwyLCJHIl0sWzAsMSwiQSJdLFsyLDAsIkIiXSxbNCwyLCJIIl0sWzAsMywiZiIsMl0sWzEsMCwiYSIsMl0sWzIsMCwiYiIsMix7ImxhYmVsX3Bvc2l0aW9uIjozMH1dLFsxLDIsInEiXV0=
\[\begin{tikzcd}[row sep=small]
	&& B \\
	A \\
	& G &&& H
	\arrow["b"'{pos=0.3}, from=1-3, to=3-2]
	\arrow["q", from=2-1, to=1-3]
	\arrow["a"', from=2-1, to=3-2]
	\arrow["f"', from=3-2, to=3-5]
\end{tikzcd}\]
If $bq = a$, then there exists a unique monomorphism $u \colon \image fa\rightarrow\image fb$ that makes the following diagram commute. 
% https://q.uiver.app/#q=WzAsNixbMSwyLCJHIl0sWzAsMSwiQSJdLFsyLDAsIkIiXSxbNCwyLCJIIl0sWzMsMSwiXFxpbWFnZU9iaiAoZmEpIl0sWzUsMCwiXFxpbWFnZU9iaiAoZmIpIl0sWzAsMywiZiIsMl0sWzEsMCwiYSIsMl0sWzIsMCwiYiIsMix7ImxhYmVsX3Bvc2l0aW9uIjozMH1dLFs0LDMsIlxcaW1hZ2UgKGZhKSIsMCx7ImNvbG91ciI6WzMwNCw1MywzMF0sInN0eWxlIjp7InRhaWwiOnsibmFtZSI6Imhvb2siLCJzaWRlIjoidG9wIn19fSxbMzA0LDUzLDMwLDFdXSxbNSwzLCJcXGltYWdlIChmYikiLDAseyJjb2xvdXIiOlszMDQsNTMsMzBdLCJzdHlsZSI6eyJ0YWlsIjp7Im5hbWUiOiJob29rIiwic2lkZSI6InRvcCJ9fX0sWzMwNCw1MywzMCwxXV0sWzIsNSwiXFxtaSAoZmIpIiwwLHsiY29sb3VyIjpbMzA0LDUzLDMwXX0sWzMwNCw1MywzMCwxXV0sWzEsNCwiXFxtaSAoZmEpIiwyLHsibGFiZWxfcG9zaXRpb24iOjcwLCJjb2xvdXIiOlszMDQsNTMsMzBdfSxbMzA0LDUzLDMwLDFdXSxbMSwyLCJxIl0sWzQsNSwiXFxleGlzdHMgISAgdSIsMCx7ImNvbG91ciI6WzAsNjAsNjBdLCJzdHlsZSI6eyJ0YWlsIjp7Im5hbWUiOiJob29rIiwic2lkZSI6InRvcCJ9LCJib2R5Ijp7Im5hbWUiOiJkYXNoZWQifX19LFswLDYwLDYwLDFdXV0=
\[\begin{tikzcd}[row sep=small]
	&& B &&& {\imageObj (fb)} \\
	A &&& {\imageObj (fa)} \\
	& G &&& H
	\arrow["{\mi (fb)}", color={rgb,255:red,117;green,36;blue,112}, from=1-3, to=1-6]
	\arrow["b"'{pos=0.3}, from=1-3, to=3-2]
	\arrow["{\image (fb)}", color={rgb,255:red,117;green,36;blue,112}, hook, from=1-6, to=3-5]
	\arrow["q", from=2-1, to=1-3]
	\arrow["{\mi (fa)}"'{pos=0.7}, color={rgb,255:red,117;green,36;blue,112}, from=2-1, to=2-4]
	\arrow["a"', from=2-1, to=3-2]
	\arrow["{\exists !  u}", color={rgb,255:red,214;green,92;blue,92}, dashed, hook, from=2-4, to=1-6]
	\arrow["{\image (fa)}", color={rgb,255:red,117;green,36;blue,112}, hook, from=2-4, to=3-5]
	\arrow["f"', from=3-2, to=3-5]
\end{tikzcd}\]
\end{lemma}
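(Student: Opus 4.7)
The plan is to exhibit a suitable mono factorization of $fa$ through $\imageObj(fb)$ and then invoke the universal property of $\imageObj(fa)$.

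First I would observe that the hypothesis $bq = a$ gives $fa = fbq = \image(fb) \circ \mi(fb) \circ q$. Since $\image(fb)$ is a monomorphism by definition of image factorization, the pair of composable arrows
\[
A \xrightarrow{\;\mi(fb) \circ q\;} \imageObj(fb) \xrightarrow{\;\image(fb)\;} H
\]
is a mono factorization of $fa$ in the sense of Definition~\ref{def:image-factorization}.

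Next, I would invoke the universal property of $\imageObj(fa)$. Because $\imageObj(fa)$, equipped with $\mi(fa)$ and $\image(fa)$, is the initial object in the category of mono factorizations of $fa$, there exists a unique morphism $u \colon \imageObj(fa) \to \imageObj(fb)$ such that
\[
u \circ \mi(fa) = \mi(fb) \circ q \qquad \text{and} \qquad \image(fb) \circ u = \image(fa).
\]
These two equations are precisely the commutativity conditions required by the diagram in the statement, so the only remaining task is to show that $u$ is monic.

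Finally, to verify that $u$ is a monomorphism, I would apply the mono-triangle lemma (Paragraph~\ref{para:mono-triangle}) to the commutative triangle
\[\begin{tikzcd}
	& {\imageObj(fb)} \\
	{\imageObj(fa)} && H
	\arrow["{\image(fb)}", hook, from=1-2, to=2-3]
	\arrow["u", from=2-1, to=1-2]
	\arrow["{\image(fa)}"', hook, from=2-1, to=2-3]
\end{tikzcd}\]
Since both $\image(fa)$ and $\image(fb)$ are monomorphisms by construction, the mono-triangle lemma forces $u$ to be monic as well. I do not anticipate a real obstacle here: the argument is purely diagrammatic and relies only on the universal property of image factorizations and the (dualized) epi-triangle lemma already proved in the preliminaries.
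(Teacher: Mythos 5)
Your proposal is correct and follows essentially the same route as the paper: both recognize $\image(fb)\circ(\mi(fb)\circ q)$ as a mono factorization of $fa=fbq$, obtain $u$ from the initiality of $\imageObj(fa)$ among mono factorizations, and conclude that $u$ is monic via the mono-triangle lemma of Paragraph~\ref{para:mono-triangle}. Your write-up is in fact slightly more explicit than the paper's about the two commutativity conditions the universal property delivers.
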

\begin{proof}
Consider the path of morphisms $A \xrightarrow{q} B \xrightarrow{b} G \xrightarrow{f} H$. Taking two image factorizations and applying the universal property of $\imageObj(fbq)$, one finds a unique morphism $u \colon \imageObj(fbq) \to \imageObj(fb)$ making the following commute.
% https://q.uiver.app/#q=WzAsNixbMywwLCJHIl0sWzAsMCwiQSJdLFsxLDAsIkIiXSxbNSwwLCJIIl0sWzMsMSwiXFxpbWFnZU9iaiAoZmIpIl0sWzMsMiwiXFxpbWFnZU9iaihmYnEpIl0sWzAsMywiZiJdLFsyLDAsImIiLDAseyJsYWJlbF9wb3NpdGlvbiI6MzAsInN0eWxlIjp7InRhaWwiOnsibmFtZSI6Imhvb2siLCJzaWRlIjoidG9wIn19fV0sWzQsMywiXFxpbWFnZSAoZmIpIiwyLHsic3R5bGUiOnsidGFpbCI6eyJuYW1lIjoiaG9vayIsInNpZGUiOiJ0b3AifX19XSxbMiw0LCJcXG1pIChmYikiLDJdLFsxLDIsInEiXSxbMSw1LCIiLDIseyJjdXJ2ZSI6NX1dLFs1LDMsIiIsMCx7ImN1cnZlIjo1LCJzdHlsZSI6eyJ0YWlsIjp7Im5hbWUiOiJob29rIiwic2lkZSI6InRvcCJ9fX1dLFs1LDQsIlxcZXhpc3RzICEgdSIsMCx7InN0eWxlIjp7ImJvZHkiOnsibmFtZSI6ImRhc2hlZCJ9fX1dXQ==
\[\begin{tikzcd}[row sep=small]
	A & B && G && H \\
	&&& {\imageObj (fb)} \\
	&&& {\imageObj(fbq)}
	\arrow["q", from=1-1, to=1-2]
	\arrow[curve={height=30pt}, from=1-1, to=3-4]
	\arrow["b"{pos=0.3}, hook, from=1-2, to=1-4]
	\arrow["{\mi (fb)}"', from=1-2, to=2-4]
	\arrow["f", from=1-4, to=1-6]
	\arrow["{\image (fb)}"', hook, from=2-4, to=1-6]
	\arrow[curve={height=30pt}, hook, from=3-4, to=1-6]
	\arrow["{\exists ! u}", dashed, from=3-4, to=2-4]
\end{tikzcd}\]
This concludes the proof since $fbq = fa$ and since $u$ is monic (q.v. Paragraph~\ref{para:mono-triangle}).
\end{proof}

\begin{definition}\label{def:diagram-of-images}
    Given a diagram $d \colon \cat{J} \to \cat{C}$ and a cocone $\Omega$ over $d$, we can define a diagram $\imageObj(\Omega, d) \colon\cat{J} \to \cat{C}$ whose objects come from taking the image factorisation of each $\Omega_i$ and whose maps arise as in Lemma~\ref{lemma:images_map}. We call the diagram obtained in this way the \define{diagram of images obtained from the cocone $\Omega$} (cf. Paragraph~\ref{para:diagram-of-images-figure}).
\end{definition}

\body{
Using the language of Definition~\ref{def:diagram-of-images}, we can now state Theorem~\ref{thm:decomp-contraction} which establishes the desired diagram corresponding to a lasso contraction.  
}

\begin{theorem}\label{thm:decomp-contraction}
    Let $(\La,\eta)$ be a strong lasso on a mono-strong category $\cat{C}$. Let Y be an object of $\cat{C}$ which arises as a colimit of a monic diagram $d:\cat{J} \to \cat{C}$. Denoting by~$\Lambda$ the cocone sitting above $d$, for any monomorphism $f \colon X \hookrightarrow Y$, let $d\contract{f} \colon \cat{J} \rightarrow \cat{C}$ denote the diagram of images $\imageObj(\eta_X^\sharp \Lambda, d) \colon\cat{J} \to \cat{C}$ obtained from the cocone $\eta_X^\sharp \Lambda$. That is, the cocone whose components are the composition of the components of $\Lambda$ with the contraction map~$\eta_X^\sharp$. Then $d\contract{f}$ has colimit $Y\contract{f}$.
\end{theorem}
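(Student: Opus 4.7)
The plan is to verify the universal property of the cocone $\Omega\contract{f}$ whose components are the monic legs $\image(\eta_X^\sharp \Lambda_i) \colon d\contract{f}(i) \hookrightarrow Y\contract{f}$ of the image factorizations of $\eta_X^\sharp \Lambda$; that these assemble into a cocone over $d\contract{f}$ follows from Lemma~\ref{lemma:images_map} together with the cocone condition on $\Lambda$. Given a competing cocone $\Psi \colon d\contract{f} \to Z$, precomposing each $\Psi_i$ with the epic leg $\mi(\eta_X^\sharp \Lambda_i)$ produces a cocone over $d$; since $\colim d = Y$, this yields a unique $\phi \colon Y \to Z$ with $\phi \Lambda_i = \Psi_i \mi(\eta_X^\sharp \Lambda_i)$. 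Because $Y\contract{f}$ is the pushout of $f$ along $\eta_X$, factoring $\phi$ through $\eta_X^\sharp \colon Y \twoheadrightarrow Y\contract{f}$ reduces to producing a map $\hat\phi \colon \La X \to Z$ with $\hat\phi \eta_X = \phi f$.

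To construct $\hat\phi$ I would exploit both adhesivity and strongness of the lasso. Pulling $f$ back pointwise along the cocone $\Lambda$ gives a monic subdiagram $X_\bullet \hookrightarrow d$ with $X_\bullet(i) = X \times_Y d(i)$; by adhesivity $\colim X_\bullet = X$, and by Definition~\ref{def:strong-lasso} applied to the monic diagram $X_\bullet$ one obtains $\colim \La X_\bullet = \La X$, with $\eta_X$ arising as the colimit of the components $\eta_{X_\bullet(i)}$. It therefore suffices to assemble a cocone $\xi$ over $\La X_\bullet$ satisfying $\xi_i \eta_{X_\bullet(i)} = \phi f \iota_i$, where $\iota_i \colon X_\bullet(i) \hookrightarrow d(i)$ is the pullback leg. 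Setting $h_i := \mi(\eta_X^\sharp \Lambda_i) \iota_i$, a short chase using the pullback defining $X_\bullet(i)$, the pushout square for $Y\contract{f}$ (whose leg $\La X \hookrightarrow Y\contract{f}$ is monic by adhesivity), and the naturality of $\eta$ shows that $\image(\eta_X^\sharp \Lambda_i) h_i = (\La X \hookrightarrow Y\contract{f})(\La X_\bullet(i) \to \La X) \eta_{X_\bullet(i)}$. Combining this identity with Lemma~\ref{image_epi_precomp} (so that $X_\bullet(i)$ and $\La X_\bullet(i)$ share the same image in $Y\contract{f}$, which sits inside $d\contract{f}(i)$) produces a unique $\tilde g_i \colon \La X_\bullet(i) \to d\contract{f}(i)$ with $\tilde g_i \eta_{X_\bullet(i)} = h_i$. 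I then set $\xi_i := \Psi_i \tilde g_i$; the cocone compatibility of $\xi$ follows from that of $\Psi$ by epic cancellation against $\eta_{X_\bullet(i)}$.

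This yields $\hat\phi \colon \La X \to Z$ and hence the sought-after $u \colon Y\contract{f} \to Z$ via the pushout universal property. The identity $u \image(\eta_X^\sharp \Lambda_i) = \Psi_i$ and the uniqueness of $u$ both reduce to cancelling the epimorphisms $\mi(\eta_X^\sharp \Lambda_i)$ and $\eta_X^\sharp$ respectively. The main obstacle I anticipate is the construction of $\tilde g_i$ in the middle paragraph: this is the only point where strongness of the lasso, adhesivity (notably for the monicity of $\La X \hookrightarrow Y\contract{f}$), and the image-factorization lemmas of Section~2 must all be orchestrated together; the remaining steps are essentially formal manipulations of universal properties.
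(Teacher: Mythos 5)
Your proof is correct, but it is organized quite differently from the paper's. The paper deliberately takes an indirect route: it first builds an auxiliary diagram $h$ by \emph{pointwise pushout} of the span $(\Qa_X\circ\Ma(f))(d)\leftarrow\Ma(f)(d)\rightarrow d$, proves $\colim h=Y\contract{f}$ (Theorem~\ref{thm:local-contractions-give-correct-colimit}), passes to the diagram of images of $h$ via Lemma~\ref{lemma:colimit-preseved-by-images}, and only then identifies that diagram with $\imageObj(\eta_X^\sharp\Lambda,d)$ using the epicness of the pushout legs $u_i\colon d(i)\to h(i)$ and Lemma~\ref{image_epi_precomp}. You instead verify the colimit universal property \emph{directly} on the diagram of images: given a competing cocone $\Psi$, you recover $\phi\colon Y\to Z$ by precomposing with the epis $\mi(\eta_X^\sharp\Lambda_i)$, and you manufacture the second pushout leg $\hat\phi\colon\La X\to Z$ by exhibiting a cocone over $\La X_\bullet$, whose colimit is $\La X$ by strongness. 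The core ingredients are identical to the paper's ($\colim X_\bullet=X$ from Lemma~\ref{lemma:adhesive-colimits}, $\colim\La X_\bullet=\La X$ from strongness, and the defining pushout $Y\contract{f}=Y+_X\La X$), but your factorization $\tilde g_i\colon\La X_\bullet(i)\to d\contract{f}(i)$ plays the role of the paper's maps $v_i\colon q(i)\to h(i)$, and I checked that it does exist: since $\eta_{X_\bullet(i)}$ is epic, Lemma~\ref{image_epi_precomp} and the initiality of the image force $\imageObj\bigl((\La X\to Y\contract{f})\circ\La\lambda_i\bigr)$ to factor monically through $d\contract{f}(i)$. What the paper's detour buys is the explicit description of $d\contract{f}$ as a pushout of diagrams (used elsewhere, e.g.\ in Proposition~\ref{prop:local-contractions-give-correct-colimit}) and a one-line conceptual argument when all colimits exist; what yours buys is a self-contained proof that never mentions the auxiliary diagram $h$ or Lemma~\ref{lemma:colimit-preseved-by-images}. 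Two small notational slips to fix: as written $\phi f\iota_i$ does not typecheck, since your $\iota_i$ lands in $d(i)$ rather than $X$ --- the intended compatibility is $\xi_i\eta_{X_\bullet(i)}=\phi\Lambda_i\iota_i=\phi f\lambda_i$ with $\lambda_i\colon X_\bullet(i)\to X$ the other pullback leg; and the uniqueness of $u$ uses the joint epicness of the colimit legs $\Lambda_i$ in addition to cancelling $\eta_X^\sharp$.
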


\body{Rather than giving a proof of Theorem~\ref{thm:decomp-contraction} (we defer it to later on in this section, q.v. Paragraph~\ref{para:proof-of-thm:decomp-contraction}), we will instead construct the required diagram $d\contract{f}$ as a pushout of diagrams (Proposition~\ref{prop:local-contractions-give-correct-colimit} and Theorem~\ref{thm:local-contractions-give-correct-colimit}) and then prove this construction is equivalent to that of Theorem~\ref{thm:decomp-contraction}. This indirect proof approach of ours has a two benefits: (1) it clarifies the cryptomorphic relationship between the two constructions and (2) it allows for a short and conceptual proof (see Proposition~\ref{prop:local-contractions-give-correct-colimit}).
}

\body{\label{para:construction-subobject}
Building towards this more conceptual proof, consider now a lasso contraction $Y\contract{f}$ of an object $Y$ with respect to a subobject $f \colon X \hookrightarrow Y$. Given any $\cat{J}$-shaped diagram $d$ with colimit $(Y, \Lambda)$, one can define a family of objects $X_i := X\times_Y d(i)$ indexed by  $i\in\cat{J}$ by point-wise pullback of $f$ and $\Lambda_i$. 
% https://q.uiver.app/#q=WzAsNixbMSwwLCJYX2k9WFxcdGltZXNfWSBkKGkpIl0sWzAsMSwiWCJdLFsyLDEsImQoaSkiXSxbMSwxLCJZIl0sWzEsMiwiWVxcY29udHJhY3R7Zn0iXSxbMCwyLCJcXExhIFgiXSxbMSwzLCJmIiwwLHsic3R5bGUiOnsidGFpbCI6eyJuYW1lIjoiaG9vayIsInNpZGUiOiJ0b3AifX19XSxbMiwzLCJcXExhbWJkYV9pIiwyLHsic3R5bGUiOnsidGFpbCI6eyJuYW1lIjoiaG9vayIsInNpZGUiOiJ0b3AifX19XSxbMCwxLCJcXGxhbWJkYV9pIiwyLHsic3R5bGUiOnsidGFpbCI6eyJuYW1lIjoiaG9vayIsInNpZGUiOiJ0b3AifX19XSxbMCwyLCJmX2kiLDAseyJzdHlsZSI6eyJ0YWlsIjp7Im5hbWUiOiJob29rIiwic2lkZSI6InRvcCJ9fX1dLFsxLDUsIlxcZXRhX1giLDAseyJzdHlsZSI6eyJoZWFkIjp7Im5hbWUiOiJlcGkifX19XSxbNSw0LCJmXlxcc2hhcnAiXSxbMyw0LCJcXGV0YV9YXlxcc2hhcnAiLDAseyJzdHlsZSI6eyJoZWFkIjp7Im5hbWUiOiJlcGkifX19XSxbMCwzLCIiLDAseyJzdHlsZSI6eyJuYW1lIjoiY29ybmVyIn19XSxbNCwxLCIiLDAseyJzdHlsZSI6eyJuYW1lIjoiY29ybmVyIn19XV0=
\[\begin{tikzcd}[row sep = small]
	& {X_i=X\times_Y d(i)} \\
	X & Y & {d(i)} \\
	{\La X} & {Y\contract{f}}
	\arrow["{\lambda_i}"', hook, from=1-2, to=2-1]
	\arrow["\lrcorner"{anchor=center, pos=0.125, rotate=-45}, draw=none, from=1-2, to=2-2]
	\arrow["{f_i}", hook, from=1-2, to=2-3]
	\arrow["f", hook, from=2-1, to=2-2]
	\arrow["{\eta_X}", two heads, from=2-1, to=3-1]
	\arrow["{\eta_X^\sharp}", two heads, from=2-2, to=3-2]
	\arrow["{\Lambda_i}"', hook, from=2-3, to=2-2]
	\arrow["{f^\sharp}", from=3-1, to=3-2]
	\arrow["\lrcorner"{anchor=center, pos=0.125, rotate=180}, draw=none, from=3-2, to=2-1]
\end{tikzcd}\]
When $\cat{C}$ is mono-strong, the family $X_i$ assembles into a $\cat{J}$-shaped diagram whose colimit object is $X$. Moreover, this can be done functorially: the following  result, whose proof we defer to the apendix, is a corollary of theorem of  Bumpus, Kocsis,  Master and Minichiello~\cite{structured-decompositions}.
}

\begin{corollary}\label{cor:submon_extension}
    The functor $\mathsf{MDgm}:\cat{C}^{\op} \to\mathbf{Set}$ in Lemma~\ref{lemma: monic diagram presheaf}, which sends objects to their sets of structured decompositions, extends to a functor \[\Ma:\cat{C}^{\op}\to \mathbf{Cat}.\]
\end{corollary}
\begin{proof}
    See Appendix~\ref{appendix:laxity}.
\end{proof}

\body{\label{para:defining-Q}
We have already seen in Paragraph~\ref{para:cant_just_contract} that we cannot naively construct our new diagram by individually contracting each $d(i)$ along the subobjects $f_i \colon X_i \hookrightarrow d(i)$ as defined in Paragraph~\ref{para:construction-subobject}. Instead, we want each object to respect the global changes that happen in the contraction along $f$ that might not be respected locally in a contraction along each $f_i$. To that end, we define a diagram whose objects are the images of each $X_i$ under the whole contraction map. The first step is to show that any diagram has the same colimit as its associated diagram of images. 
}

\begin{lemma}\label{lemma:colimit-preseved-by-images}
    Let $\cat{C}$ be a category with image factorizations and colimits of shape~$\cat{J}$. If $d:\cat{J}\rightarrow\cat{C}$ is a diagram with colimit cocone $\Lambda$, then $\colim \imageObj(\Lambda, d) \cong \colim d$. %denote by $h:\cat{J}\rightarrow \cat{C}$ its associated diagram of images (i.e. $h :i\mapsto \imageObj \Lambda_i$). Then $\colim h \cong \colim d$.
\end{lemma}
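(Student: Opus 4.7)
The plan is to exploit the fact that in an adhesive category, each component $\Lambda_i \colon d(i) \to Y$ of the colimit cocone admits an epi–mono factorization $\Lambda_i = \image\Lambda_i \circ \mi\Lambda_i$, and the arrows of the diagram $\imageObj(\Lambda,d)$ are by construction (via Lemma~\ref{lemma:images_map}) the unique fillers that make both of the resulting families behave coherently. Concretely, for each morphism $\phi \colon i \to j$ in $\cat{J}$, applying Lemma~\ref{lemma:images_map} to the triangle $\Lambda_j \circ d(\phi) = \Lambda_i$ yields a unique $\imageObj d(\phi) \colon \imageObj\Lambda_i \to \imageObj\Lambda_j$ satisfying \emph{both} $\imageObj d(\phi) \circ \mi\Lambda_i = \mi\Lambda_j \circ d(\phi)$ and $\image\Lambda_j \circ \imageObj d(\phi) = \image\Lambda_i$. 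The first identity says that $\mi\Lambda \colon d \Rightarrow \imageObj(\Lambda,d)$ is a natural transformation with epic components; the second says that the family $(\image\Lambda_i)_{i}$ is a cocone from $\imageObj(\Lambda,d)$ to $Y$.

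Given this setup, I would prove that $(Y,\image\Lambda)$ is \emph{universal} among cocones over $\imageObj(\Lambda,d)$ by pulling cocones back and forth along $\mi\Lambda$. For existence: take any cocone $(Z,\psi)$ over $\imageObj(\Lambda,d)$; then $(Z,\psi \circ \mi\Lambda)$ is a cocone over $d$, so the universal property of $Y = \colim d$ produces a unique $h \colon Y \to Z$ with $h \circ \Lambda_i = \psi_i \circ \mi\Lambda_i$ for all $i$. Rewriting this as $(h \circ \image\Lambda_i) \circ \mi\Lambda_i = \psi_i \circ \mi\Lambda_i$ and invoking the fact that $\mi\Lambda_i$ is epic forces $h \circ \image\Lambda_i = \psi_i$, so $h$ mediates the new cocone. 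For uniqueness: any mediator $h'$ satisfying $h' \circ \image\Lambda_i = \psi_i$ automatically satisfies $h' \circ \Lambda_i = \psi_i \circ \mi\Lambda_i$, and hence equals $h$ by the universality of $Y$.

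The main obstacle, such as it is, amounts to checking that the two coherence conditions mentioned in the first paragraph really do both follow from the single uniqueness clause of Lemma~\ref{lemma:images_map}; once this bookkeeping is in place, the universality argument is purely formal and proceeds by epi-cancellation. Note that adhesivity of $\cat{C}$ is used only insofar as it guarantees that image factorizations are epi–mono (as recalled in the preliminaries); the rest of the argument only requires the existence of image factorizations and of the colimit $\colim d$.
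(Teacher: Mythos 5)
Your proposal is correct and follows essentially the same route as the paper: both arguments transfer cocones back and forth along the epimorphic components $\mi\Lambda_i$ and conclude by epi-cancellation (the paper packages this as an isomorphism between the categories of cocones over $d$ and over $\imageObj(\Lambda,d)$, while you verify the universal property of $(Y,\image\Lambda)$ directly, which is an equivalent presentation). Your preliminary check that Lemma~\ref{lemma:images_map} supplies both coherence identities (naturality of $\mi\Lambda$ and the cocone condition for $\image\Lambda$) is exactly the bookkeeping the paper's construction relies on, and your remark that only the epi--mono form of image factorizations is needed, not full adhesivity, is accurate.
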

\begin{proof}
We will show that the category of cocones over $d$ is isomorphic to the category of cocones over $\imageObj(\Lambda, d)$. As such, the two categories will have the same terminal object: i.e. $\colim d \cong \colim \imageObj(\Lambda, d)$. 

To see that, suppose that $(z_i)_{i \in J}$ is a family of cocone arrows over $d$ with apex $Z$. Let $f \colon i \to j$ be a morphism in $\cat{J}$ and consider the following diagram. 
% https://q.uiver.app/#q=WzAsNixbMCwwLCJkKGkpIl0sWzAsMiwiZChqKSJdLFsyLDAsIlxcaW1hZ2VPYmooXFxMYW1iZGEsIGQpKGkpIl0sWzIsMiwiXFxpbWFnZU9iaihcXExhbWJkYSwgZCkoaikiXSxbMywxLCJcXGNvbGltIGQiXSxbNCwxLCJaIl0sWzAsMSwiZChmKSAiXSxbMiwzLCJcXGltYWdlT2JqKFxcTGFtYmRhLCBkKShmKSJdLFswLDIsIlxcbWkgXFxMYW1iZGFfaSIsMSx7InN0eWxlIjp7ImhlYWQiOnsibmFtZSI6ImVwaSJ9fX1dLFsxLDMsIlxcbWkgXFxMYW1iZGFfaiIsMSx7InN0eWxlIjp7ImhlYWQiOnsibmFtZSI6ImVwaSJ9fX1dLFsyLDQsIlxcaW0gXFxMYW1iZGFfaSIsMSx7InN0eWxlIjp7InRhaWwiOnsibmFtZSI6Imhvb2siLCJzaWRlIjoidG9wIn19fV0sWzMsNCwiXFxpbSBcXExhbWJkYV9qIiwxLHsic3R5bGUiOnsidGFpbCI6eyJuYW1lIjoiaG9vayIsInNpZGUiOiJ0b3AifX19XSxbMCw1LCJ6X2kiLDEseyJjdXJ2ZSI6LTV9XSxbMSw1LCJ6X2oiLDEseyJjdXJ2ZSI6NX1dLFs0LDUsInciLDFdXQ==
\[\begin{tikzcd}[row sep=small]
	{d(i)} && {\imageObj(\Lambda, d)(i)} \\
	&&& {\colim d} & Z \\
	{d(j)} && {\imageObj(\Lambda, d)(j)}
	\arrow["{\mi \Lambda_i}"{description}, two heads, from=1-1, to=1-3]
	\arrow["{z_i}"{description}, curve={height=-30pt}, from=1-1, to=2-5]
	\arrow["{d(f) }", from=1-1, to=3-1]
	\arrow["{\im \Lambda_i}"{description}, hook, from=1-3, to=2-4]
	\arrow["{\imageObj(\Lambda, d)(f)}", from=1-3, to=3-3]
	\arrow["w"{description}, from=2-4, to=2-5]
	\arrow["{z_j}"{description}, curve={height=30pt}, from=3-1, to=2-5]
	\arrow["{\mi \Lambda_j}"{description}, two heads, from=3-1, to=3-3]
	\arrow["{\im \Lambda_j}"{description}, hook, from=3-3, to=2-4]
\end{tikzcd}\]
Since $((z_i)_{i \in J}, Z)$ is a cocone, we have that $z_i = z_j \circ d(f)$ and
so \[w \circ \im(\Lambda_i) \circ \mi(\Lambda_i) = z_i = z_j\circ d(f) =  w \circ \im(\Lambda_j) \circ \imageObj(\Lambda, d)(f) \circ \mi(\Lambda_i)\] and hence, since $\mi(\Lambda_i)$ is an epimorphism, we have that \[w \circ \im(\Lambda_i) = w \circ \im(\Lambda_j) \circ \imageObj(\Lambda, d)(f).\]
In other words, we have that $((w \circ \im(\Lambda_i))_{i \in \cat{J}} , Z)$ is a cocone over $\imageObj(\Lambda, d)$. 

Conversely, suppose that $(z_i)_{i \in J}$ is a family of cocone arrows over $\imageObj(\Lambda, d)$ with apex $Z$ as shown in the following diagram. 
% https://q.uiver.app/#q=WzAsNixbMCwwLCJkKGkpIl0sWzAsMiwiZChqKSJdLFsyLDAsIlxcaW1hZ2VPYmooXFxMYW1iZGEsIGQpKGkpIl0sWzIsMiwiXFxpbWFnZU9iaihcXExhbWJkYSwgZCkoaikiXSxbMywxLCJcXGNvbGltIGQiXSxbNCwxLCJaIl0sWzAsMSwiZChmKSAiXSxbMiwzLCJcXGltYWdlT2JqKFxcTGFtYmRhLCBkKShmKSJdLFswLDIsIlxcbWkgXFxMYW1iZGFfaSIsMSx7InN0eWxlIjp7ImhlYWQiOnsibmFtZSI6ImVwaSJ9fX1dLFsxLDMsIlxcbWkgXFxMYW1iZGFfaiIsMSx7InN0eWxlIjp7ImhlYWQiOnsibmFtZSI6ImVwaSJ9fX1dLFsyLDQsIlxcaW0gXFxMYW1iZGFfaSIsMSx7InN0eWxlIjp7InRhaWwiOnsibmFtZSI6Imhvb2siLCJzaWRlIjoidG9wIn19fV0sWzMsNCwiXFxpbSBcXExhbWJkYV9qIiwxLHsic3R5bGUiOnsidGFpbCI6eyJuYW1lIjoiaG9vayIsInNpZGUiOiJ0b3AifX19XSxbNCw1LCJ3IiwxXSxbMiw1LCJ6X2kiLDEseyJjdXJ2ZSI6LTV9XSxbMyw1LCJ6X2oiLDEseyJjdXJ2ZSI6NX1dXQ==
\[\begin{tikzcd}[row sep=small]
	{d(i)} && {\imageObj(\Lambda, d)(i)} \\
	&&& {\colim d} & Z \\
	{d(j)} && {\imageObj(\Lambda, d)(j)}
	\arrow["{\mi \Lambda_i}"{description}, two heads, from=1-1, to=1-3]
	\arrow["{d(f) }", from=1-1, to=3-1]
	\arrow["{\im \Lambda_i}"{description}, hook, from=1-3, to=2-4]
	\arrow["{z_i}"{description}, curve={height=-30pt}, from=1-3, to=2-5]
	\arrow["{\imageObj(\Lambda, d)(f)}", from=1-3, to=3-3]
	\arrow["w"{description}, from=2-4, to=2-5]
	\arrow["{\mi \Lambda_j}"{description}, two heads, from=3-1, to=3-3]
	\arrow["{\im \Lambda_j}"{description}, hook, from=3-3, to=2-4]
	\arrow["{z_j}"{description}, curve={height=30pt}, from=3-3, to=2-5]
\end{tikzcd}\]
Since $\imageObj(\Lambda, d)(f) \circ \mi(\Lambda_i) = \mi(\Lambda_j) \circ d(f)$, one has that $((z_i \circ \mi(\Lambda_i)_{i \in \cat{J}}, Z)$ is a cocone over $d$, as desired. 

Thus we have shown that every cocone over $d$ yields a cocone over $\imageObj(\Lambda, d)$ and vice-versa. This can be easily seen to yield an isomorphism of categories $\mathsf{Cocones}(d) \cong \mathsf{Cocones}(\imageObj(\Lambda, d))$.
\end{proof}

\begin{comment}
\body{
The above shows that the diagram of images under a colimit cocone inherits the colimit. However, it is also a little more general. Indeed, Lemma~\ref{lemma:colimit-preseved-by-images} makes no assumptions about the morhpisms in the diagram $h$, only that they commute appropriately.
}
\end{comment}

\begin{lemma}\label{def_Q}
    Let $(\La, \eta)$ be a strong lasso on an mono-strong category $\cat{C}$ with all colimits. Given any monic diagram $d$ with colimit cocone $(\Lambda,X)$, let 
    \[\Qa_X \colon d \mapsto \imageObj(\eta_X\Lambda, d)\] be the function taking $d$ to the diagram of images obtained from $d$ under the cocone $\eta_X\Lambda$. Then for each such $d$, the colimit of $\Qa_X(d)$ is $\La X$.
\end{lemma}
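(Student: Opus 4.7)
The strategy is to route the argument through the diagram $\La d \colon \cat{J} \to \cat{C}$ and then invoke Lemma~\ref{lemma:colimit-preseved-by-images}. Because $(\La,\eta)$ is a strong lasso and $d$ is monic, the functor $\La$ preserves the colimit of $d$, so $(\La\Lambda, \La X)$ is a colimit cocone above $\La d$, where $\La\Lambda$ denotes the cocone whose $i$-th leg is $\La(\Lambda_i)$. Applying Lemma~\ref{lemma:colimit-preseved-by-images} to $\La d$ equipped with its colimit cocone $\La\Lambda$ yields $\colim \imageObj(\La\Lambda, \La d) \cong \La X$. It therefore suffices to construct a natural isomorphism of diagrams $\Qa_X(d) \cong \imageObj(\La\Lambda, \La d)$.

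For the objectwise piece, fix $i \in \cat{J}$. Naturality of $\eta$ at the arrow $\Lambda_i \colon d(i) \to X$ gives the identity $\eta_X \circ \Lambda_i = \La(\Lambda_i) \circ \eta_{d(i)}$. Since $\eta_{d(i)}$ is epic (condition~\ref{cond:lasso-2}) and adhesive categories are balanced (Lemma~\ref{lemma:epi_mono_iso}), Lemma~\ref{image_epi_precomp} supplies an isomorphism $\varphi_i \colon \imageObj(\eta_X \circ \Lambda_i) \xrightarrow{\sim} \imageObj(\La \Lambda_i)$, that is $\varphi_i \colon \Qa_X(d)(i) \xrightarrow{\sim} \imageObj(\La\Lambda, \La d)(i)$. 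By construction $\varphi_i$ is compatible with the $\image$-monomorphisms into $\La X$ on both sides.

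The key step will be to check that the collection $(\varphi_i)_i$ is natural in $i$: for each morphism $f\colon i \to j$ in $\cat{J}$, the square formed by $\varphi_i$, $\varphi_j$, $\Qa_X(d)(f)$ and $\imageObj(\La\Lambda, \La d)(f)$ must commute. Both composites $\varphi_j \circ \Qa_X(d)(f)$ and $\imageObj(\La\Lambda, \La d)(f) \circ \varphi_i$ are monomorphisms from $\imageObj(\eta_X \circ \Lambda_i)$ to $\imageObj(\La \Lambda_j)$, and when post-composed with $\image(\La\Lambda_j)$ they produce the same morphism into $\La X$ (namely the one induced by $\La(\Lambda_j) \circ \La(d(f)) \circ \eta_{d(i)}$, which by naturality equals $\eta_X \circ \Lambda_j \circ d(f) = \eta_X \circ \Lambda_i$). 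The uniqueness clause of the universal property in Definition~\ref{def:image-factorization}, together with the mono-triangle observation of Paragraph~\ref{para:mono-triangle}, forces the two composites to agree.

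The verification of this naturality square is the main technical hurdle; the rest is bookkeeping with universal properties. Once the natural isomorphism $\Qa_X(d) \cong \imageObj(\La\Lambda, \La d)$ is established, it follows that $\colim \Qa_X(d) \cong \colim \imageObj(\La\Lambda, \La d) \cong \La X$, completing the proof.
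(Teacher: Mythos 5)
Your proof is correct and follows essentially the same route as the paper's: both use Lemma~\ref{image_epi_precomp} together with the naturality identity $\eta_X\Lambda_i = \La\Lambda_i\circ\eta_{d(i)}$ to identify $\Qa_X(d)(i)$ with $\imageObj(\La\Lambda_i)$, then apply Lemma~\ref{lemma:colimit-preseved-by-images} and the strong-lasso preservation of monic colimits to conclude $\colim\Qa_X(d)\cong\colim\La d = \La X$. The only difference is that you spell out the naturality of the objectwise isomorphisms $\varphi_i$, a detail the paper's terser proof leaves implicit.
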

\begin{proof}
By Lemma~\ref{image_epi_precomp} and the fact that $\eta_X\circ\Lambda_i=\La\Lambda_i\circ\eta_{X_i}$ we have $\Qa_X(d)(i)\cong \imageObj{\La\Lambda_i}$.
Now we may apply Lemma~\ref{lemma:colimit-preseved-by-images} to the diagrams $\Qa_X(d)$ and $\La X_i$, and this gives us that that $\colim \Qa_X(d) \cong \colim_i \La X_i$. Since $\La$ preserves monic pushouts, we have that $\colim_i\La X_i=\La X$, as desired.
\end{proof}

\begin{lemma}\label{lemma:definition-of-Q}
The functions in Lemma~\ref{def_Q} assemble into a \textit{lax} natural transformation
% https://q.uiver.app/#q=WzAsMixbMCwwLCJcXGNhdHtDfV57b3B9Il0sWzIsMCwiXFxjYXR7Q2F0fSJdLFswLDEsIlxcTWEiLDAseyJjdXJ2ZSI6LTJ9XSxbMCwxLCJcXE1hIFxcY2lyYyBcXExhXntvcH0iLDIseyJjdXJ2ZSI6Mn1dLFsyLDMsIlxcUWEiLDAseyJzaG9ydGVuIjp7InNvdXJjZSI6MjAsInRhcmdldCI6MjB9fV1d
\[\begin{tikzcd}
	{\cat{C}^{\op}} && {\cat{Cat}}
	\arrow[""{name=0, anchor=center, inner sep=0}, "\Ma", curve={height=-12pt}, from=1-1, to=1-3]
	\arrow[""{name=1, anchor=center, inner sep=0}, "{\Ma \circ \La^{\op}}"', curve={height=12pt}, from=1-1, to=1-3]
	\arrow["\Qa", shorten <=3pt, shorten >=3pt, Rightarrow, from=0, to=1]
\end{tikzcd}\]
\end{lemma}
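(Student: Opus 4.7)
The plan is to produce the three pieces of data demanded by a lax natural transformation between functors into $\cat{Cat}$: the component functors $\Qa_X$, the laxity 2-cells indexed by morphisms of $\cat{C}$, and a verification of the two coherence axioms. Lemma~\ref{def_Q} already supplies the action of $\Qa_X$ on objects, so the first task is to promote this to a functor $\Qa_X \colon \Ma(X) \to \Ma(\La X)$. A morphism $(f,\gamma)\colon d_1 \to d_2$ in $\Ma(X)$ (cf.\ Definition~\ref{def:structured-decomposition}) consists of a functor $f$ between indexing categories and a natural transformation $\gamma$ whose triangles express that the two diagrams share the colimit $X$. Applied index-wise, Lemma~\ref{lemma:images_map} produces unique monomorphisms between the corresponding image objects of $\Qa_X(d_1)$ and $\Qa_X(d_2)$, and functoriality follows from the uniqueness clauses of the universal property of images.

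Next, I would construct the laxity 2-cell $\Qa_g\colon \Qa_Y\circ\Ma(g) \Rightarrow \Ma(\La g)\circ\Qa_X$ associated to a morphism $g\colon Y \to X$ of $\cat{C}$. Given $d \in \Ma(X)$ with colimit cocone $\Lambda$, the left-hand composite first pulls $\Lambda$ back along $g$ (producing a new cocone into $Y$, as in Paragraph~\ref{para:construction-subobject}) and then takes images along $\eta_Y$; the right-hand composite takes images along $\eta_X$ first and then pulls back along $\La g$. To compare the two at each index $i$, I would set up the commutative cube whose vertical faces are the naturality squares of $\eta$ at $d(i)$ and $X$, and whose bottom face is the pullback square defining $\Ma(\La g)$ applied to the image object; a repeated application of Lemma~\ref{lemma:images_map}, together with the naturality identity $\eta_X \circ \Lambda_i = \La \Lambda_i \circ \eta_{d(i)}$ (already used in the proof of Lemma~\ref{def_Q}), then yields a canonical monomorphism in $\cat{C}$ that assembles into the required morphism of diagrams in $\Ma(\La Y)$. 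Naturality of $\Qa_g$ as $d$ varies is again a uniqueness argument pushed through the universal property of images.

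Finally I would check the unit and associativity axioms. The unit law amounts to showing that $\Qa_{\id_X}$ is the identity 2-cell, which is immediate since pulling back along an identity preserves the cocone and the induced universal morphism on images is the identity. Associativity reduces to showing that for $h\circ g$ the constructed comparison agrees with the pasting of the comparisons for $g$ and $h$; this follows from the pasting law for pullbacks together with the uniqueness part of the image universal property. The principal obstacle will be the construction of the laxity 2-cells, and in particular verifying that the comparison monomorphism produced by Lemma~\ref{lemma:images_map} really does assemble into a morphism of diagrams rather than merely a pointwise family: this requires showing that the naturality squares on $\eta$ and the pullback squares along $g$ and $\La g$ interact coherently with the transition morphisms of the underlying diagrams, which is precisely the reason the resulting transformation is only lax and not strict.
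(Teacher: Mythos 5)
Your proposal matches the paper's own proof in all essentials: you promote $\Qa_X$ to a functor by applying Lemma~\ref{lemma:images_map} index-wise and using the uniqueness clause of the image universal property for functoriality, you build the laxity $2$-cell for each morphism of $\cat{C}$ by first obtaining a comparison monomorphism between images (via the naturality identity $\eta_X\circ\Lambda_i = \La\Lambda_i\circ\eta_{d(i)}$ and the universal property of images) and then invoking the universal property of the pullback defining $\Ma(\La g)$, and you prove naturality in $d$ by the same uniqueness arguments. The only difference is that you additionally sketch the unit and composition coherence axioms for lax naturality, which the paper's proof leaves unverified; this is a welcome completion rather than a divergence.
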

\begin{proof}
    See Appendix~\ref{appendix:laxity}.
\end{proof}

\body{
For any morphism $f \colon X \to Y$, Lemma~\ref{lemma: monic diagram presheaf} and Lemma~\ref{lemma:definition-of-Q} establish the following pair of composable functors: $\Ma \La X \xleftarrow{\Qa_X} \Ma X \xleftarrow{\Ma f} \Ma Y$. To recall and clarify how these functors operate, it is useful to consider their action on any fixed $d \in \Ma Y$ (i.e. for any diagram $d \colon \cat{J} \to \cat{C}$ with colimit $Y$). To that end, consider the following diagram which shows how one constructs $\Ma(f)(d)$ (resp. $(\Qa_X \circ\Ma(f))(d)$) by pointwise pullbacks (resp. pointwise image factorizations). 
\[
\includegraphics[width=.7\textwidth]{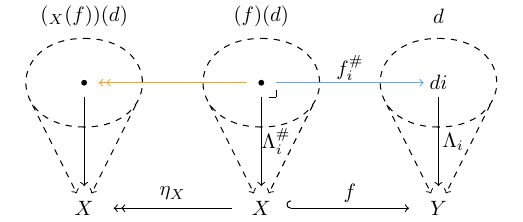}
\]
These constructions yield two families of morphisms. The first is the family \[\Bigl(\Ma(f)(d)(j) {\color{diagramBlue}\xrightarrow{f^{\#}_i}} d(j)\Bigr)_{j \in \cat{J}},\] while the second is the family \[\Bigl(\Ma(f)(d)(j) {\color{diagramGold}\xrightarrow{\mi(\eta_X \Lambda_i^\#)}} (\Qa_X \circ\Ma(f))(d)(j)\Bigr)_{j \in \cat{J}}.\] Viewed as morphisms in $\cat{C}^\cat{J}$, these assemble into the following span:
\begin{equation}\label{eqn:span-of-diagrams}
    (\Qa_X \circ\Ma(f))(d) {\color{diagramGold}\longleftarrow} \Ma(f)(d) {\color{diagramBlue}\longrightarrow} d.
\end{equation}
The following result shows that the pushout of this span is precisely the diagram we have been seeking to construct. 
}

%\begin{figure}[h]\centering\tikzfig{figures/qmfd_etc}\caption{The functor $\Ma f$ constructs a diagram with colimit $X$ from each diagram $d \colon \cat{J} \to \cat{C}$ with colimit $Y$. The diagram $(\Ma f)(d)$ is obtained by pointwise pullback of $f$ and the colimit cocone sitting above $d$ (recall Lemma~\ref{lemma: monic diagram presheaf}) and thus one furthermore finds that these pointwise pullbacks yield a family of morphisms which specify a morphism $(\Ma f)(d) \to d$ in $\cat{C}^\cat{J}$ (cf. the right hand square in Figure~\ref{fig:the-span-of-diagrams}). On the other hand, since $\Qa_X$ is defined in terms of diagrams of images (cf. Lemma~\ref{def_Q}), one has a morphism $(\Ma f)(d) \to (\Qa_X \Ma(f))(d)$ in $\cat{C}^\cat{J}$ (cf. the left hand square in Figure~\ref{fig:the-span-of-diagrams}).}\label{fig:the-span-of-diagrams}\end{figure}

\begin{proposition}\label{prop:local-contractions-give-correct-colimit}
    Let $(\La, \eta)$ be a strong lasso on a mono-strong category $\cat{C}$ with all colimits. Suppose $f \colon X \hookrightarrow Y$ is a monomorphism in $\cat{C}$ and that $d \in \Ma Y$ is a monic diagram of shape $\cat{J}$ whose colimit is $Y$. Then, working in $\cat{C}^\cat{J}$, where pushouts are computed pointwise, the pushout of the span of Equation~\eqref{eqn:span-of-diagrams} is a diagram with colimit $Y \contract{f}$.
\end{proposition}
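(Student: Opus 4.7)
The plan is to exploit the fact that $\colim \colon \cat{C}^{\cat{J}} \to \cat{C}$ is left adjoint to the constant diagram functor $\Delta \colon \cat{C} \to \cat{C}^{\cat{J}}$ and therefore preserves pushouts. Consequently, applying $\colim$ to the pushout square in $\cat{C}^{\cat{J}}$ built on the span~\eqref{eqn:span-of-diagrams} yields a pushout square in $\cat{C}$ whose corner object is exactly the colimit of the pushout diagram we wish to compute. It thus suffices to identify the span in $\cat{C}$ obtained by applying $\colim$ to~\eqref{eqn:span-of-diagrams} and to check that its pushout is $Y\contract{f}$.

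To that end, the first step is to record the colimits of the three diagrams involved: $\colim d = Y$ by hypothesis, $\colim \Ma(f)(d) = X$ by Lemma~\ref{lemma:adhesive-colimits}, and $\colim (\Qa_X \circ \Ma(f))(d) = \La X$ by Lemma~\ref{def_Q}. The second step is to identify the two morphisms in the resulting colimit span. For the blue arrow, whose components $f_i^{\#} \colon \Ma(f)(d)(i) \to d(i)$ are the pullback projections constructed in Lemma~\ref{lemma:adhesive-colimits}, the defining pullback squares give $\Lambda_i \circ f_i^{\#} = f \circ \Lambda_i^{\#}$ (where $\Lambda_i^{\#}$ are the colimit cocone components of $\Ma(f)(d)$ with apex $X$); so by the universal property the induced arrow on colimits is $f$. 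For the gold arrow, whose components are $\mi(\eta_X \Lambda_i^{\#})$, the companion monics $\im(\eta_X \Lambda_i^{\#})$ form the colimit cocone of $\Qa_X(\Ma(f)(d))$ with apex $\La X$ (this is essentially what the proof of Lemma~\ref{def_Q} establishes). Because their composite satisfies $\im(\eta_X \Lambda_i^{\#}) \circ \mi(\eta_X \Lambda_i^{\#}) = \eta_X \circ \Lambda_i^{\#}$, the universal property of $X$ as a colimit forces the induced arrow on colimits to be $\eta_X$.

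Putting these pieces together, $\colim$ sends the span~\eqref{eqn:span-of-diagrams} to the span $\La X \xleftarrow{\eta_X} X \xrightarrow{f} Y$ in $\cat{C}$, whose pushout is, by Definition~\ref{def:lasso-contraction}, precisely $Y\contract{f}$. Combining this with the preservation of pushouts by $\colim$ yields the stated conclusion.

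The main obstacle in this plan is the careful identification of the two arrows induced on colimits; this requires coordinating the defining pullback squares of Lemma~\ref{lemma:adhesive-colimits} with the image factorizations hard-wired into $\Qa_X$ via Lemma~\ref{def_Q}, and in particular using that each $\mi(\eta_X \Lambda_i^{\#})$ is epic so that the triangle $\im \circ \mi = \eta_X \circ \Lambda_i^{\#}$ determines the induced colimit arrow unambiguously. Once this bookkeeping is complete, the proposition reduces to the general fact that left adjoints preserve pushouts together with the definition of lasso contractions.
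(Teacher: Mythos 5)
Your proposal is correct and follows essentially the same route as the paper, which simply invokes the fact that colimits commute with colimits to reduce the pushout in $\cat{C}^{\cat{J}}$ to the pushout of $\La X \leftarrow X \rightarrow Y$ and then cites Lemmas~\ref{lemma:adhesive-colimits} and~\ref{def_Q} to identify the three apex objects. You go one useful step further than the paper's terse proof by explicitly checking that the two induced arrows on colimits are $\eta_X$ and $f$ (so that the resulting pushout really is $Y\contract{f}$ in the sense of Definition~\ref{def:lasso-contraction}), a verification the paper leaves implicit.
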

\begin{proof}
Colimits commute with each other, thus we have that:
\begin{align*}
    \colim &\bigl ((\Qa_X \circ\Ma(f))(d) +_{\Ma(f)(d)} d \bigr ) = \\
    &= \colim \bigl((\Qa_X \circ\Ma(f))(d)\bigr) +_{\colim \Ma(f)(d)} \colim d  \\
    &= \La X +_{\colim \Ma(f)(d)} \colim d &\text{(Lemma~\ref{lemma:definition-of-Q})}\\
    &= \La X +_{X} \colim d &\text{(Lemma~\ref{lemma: monic diagram presheaf})}\\
    &= \La X +_{X} Y &\text{(definition of } d)\\
    &= Y \contract{f} &\text{(definition of } Y \contract{f}).
\end{align*}
\end{proof}

\body{
We can actually prove a stronger result than Proposition~\ref{prop:local-contractions-give-correct-colimit}. Indeed, we can forgo the assumption that the category $\cat{C}$ has all colimits.
For the proof of this result, we have to manually show that our construction satisfies the definition of a colimit.
}

\begin{theorem}[Upgrade of Proposition~\ref{prop:local-contractions-give-correct-colimit}]\label{thm:local-contractions-give-correct-colimit}
    Let $(\La, \eta)$ be a strong lasso on a mono-strong category $\cat{C}$. Suppose $f \colon X \hookrightarrow Y$ is a monomorphism in $\cat{C}$ and that $d \in \Ma Y$ is a monic diagram whose colimit is $Y$. Then the span Equation~\eqref{eqn:span-of-diagrams} has~$Y \contract{f}$ as its colimit.
\end{theorem}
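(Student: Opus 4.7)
The plan is to verify directly that $Y\contract{f}$ carries a universal cocone over the span of Equation~\eqref{eqn:span-of-diagrams}, sidestepping the appeal to colimit commutation in $\cat{C}^\cat{J}$ used in Proposition~\ref{prop:local-contractions-give-correct-colimit}. The strategy is to bootstrap through three colimits we already possess: $\colim d = Y$ by hypothesis, $\colim \Ma(f)(d) = X$ with cocone arrows $\lambda_j \colon X_j \to X$ obtained by pulling $f$ back along $\Lambda$ (Lemma~\ref{lemma:adhesive-colimits}), and $\colim \Qa_X \Ma(f)(d) = \La X$ with cocone arrows $\im(\eta_X \lambda_j) \colon \Qa_X \Ma(f)(d)(j) \hookrightarrow \La X$ (Lemma~\ref{def_Q}). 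These three colimits, glued through the pushout square $Y\contract{f} = \La X +_X Y$ of Definition~\ref{def:lasso-contraction}, should assemble into the desired universal property.

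\textbf{Constructing the canonical cocone.} For each $j \in \cat{J}$, I would define the three components of the cocone by sending $d(j)$ to $Y\contract{f}$ via $\eta_X^\sharp \circ \Lambda_j$, and sending both $X_j = \Ma(f)(d)(j)$ and $\Qa_X \Ma(f)(d)(j)$ into $Y\contract{f}$ via $f^\sharp$, along $f^\sharp \circ \eta_X \circ \lambda_j$ and $f^\sharp \circ \im(\eta_X \lambda_j)$ respectively. Commutativity of each span at index $j$ reduces to (i) the pullback identity $f \lambda_j = \Lambda_j f_j$ together with the pushout relation $\eta_X^\sharp f = f^\sharp \eta_X$; and (ii) the image factorization $\eta_X \lambda_j = \im(\eta_X \lambda_j) \circ \mi(\eta_X \lambda_j)$. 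Naturality over morphisms of $\cat{J}$ is inherited from the functoriality of $d$, $\Ma(f)(d)$, and $\Qa_X \Ma(f)(d)$.

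\textbf{Verifying universality.} Given any competing cocone with apex $W$ and components $(\alpha_j, \beta_j, \gamma_j)$ subject to the compatibilities $\alpha_j f_j = \beta_j$ and $\gamma_j \mi(\eta_X \lambda_j) = \beta_j$, the three underlying cocones induce unique maps $\bar\alpha \colon Y \to W$, $\bar\beta \colon X \to W$, and $\bar\gamma \colon \La X \to W$ by the respective universal properties. To invoke the pushout defining $Y\contract{f}$, I need $\bar\alpha f = \bar\beta = \bar\gamma \eta_X$. The first equality follows from $\bar\alpha f \lambda_j = \bar\alpha \Lambda_j f_j = \alpha_j f_j = \beta_j$ together with uniqueness in the universal property of $X = \colim \Ma(f)(d)$. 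For the second, the chain $\bar\gamma \eta_X \lambda_j = \bar\gamma \im(\eta_X \lambda_j) \mi(\eta_X \lambda_j) = \gamma_j \mi(\eta_X \lambda_j) = \beta_j$ combined with the same uniqueness yields $\bar\gamma \eta_X = \bar\beta$. The pushout then supplies a unique $\theta \colon Y\contract{f} \to W$ with $\theta \eta_X^\sharp = \bar\alpha$ and $\theta f^\sharp = \bar\gamma$, which a short chase confirms factors all three components of the original cocone uniquely.

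\textbf{Main obstacle.} The delicate part is the bookkeeping for the gold leg: one must carefully identify $\Qa_X \Ma(f)(d)(j)$ with the image of $\eta_X \lambda_j$ (rather than the naive guess $\La X_j$ warned against in Paragraph~\ref{para:cant_just_contract}) and then exploit the epicness of $\mi(\eta_X \lambda_j)$ to lift compatibilities at the level of $X_j$ up to the level of $X$. Once this identification is clean, the rest is a routine diagram chase using only the three colimit universal properties and the contraction pushout, none of which require general colimits in $\cat{C}$ beyond those guaranteed by the theorem's hypotheses.
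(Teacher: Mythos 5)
Your proof is correct and follows essentially the same route as the paper's: both arguments reduce the claim to the three known colimits $\colim d = Y$, $\colim \Ma(f)(d) = X$ and $\colim (\Qa_X \circ \Ma(f))(d) = \La X$, and then conclude via the universal property of the defining pushout $Y\contract{f} = \La X +_X Y$. The only cosmetic difference is that the paper first forms the pointwise-pushout diagram $h(i) = q(i) +_{x(i)} d(i)$ and shows $\colim h = Y\contract{f}$, whereas you verify the universal property of the total span diagram directly (and you spell out the compatibility $\bar\alpha f = \bar\beta = \bar\gamma \eta_X$ that the paper leaves implicit).
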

\begin{proof}
    We abbreviate the diagram $\Ma(f)(d)$ by $x$ and the diagram $(\Qa_x\circ\Ma(f))(d)$ by $q$.
    Denote by $h\colon\cat{J}\to\cat{C}$ the diagram obtained via the following pointwise pushout.
    % https://q.uiver.app/#q=WzAsNCxbMCwwLCJ4KGkpIl0sWzAsMSwicShpKSJdLFsxLDAsImQoaSkiXSxbMSwxLCJxKGkpK197eChpKX0gZChpKT06aChpKSJdLFswLDEsIlxcbWkgKFxcZXRhX1hcXExhbWJkYV9pKSIsMix7InN0eWxlIjp7ImhlYWQiOnsibmFtZSI6ImVwaSJ9fX1dLFswLDIsImZfaSIsMix7InN0eWxlIjp7InRhaWwiOnsibmFtZSI6Imhvb2siLCJzaWRlIjoidG9wIn19fV0sWzEsM10sWzIsM10sWzMsMCwiIiwxLHsic3R5bGUiOnsibmFtZSI6ImNvcm5lci1pbnZlcnNlIn19XV0=
\[\begin{tikzcd}[row sep=small]
	{x(i)} & {d(i)} \\
	{q(i)} & {q(i)+_{x(i)} d(i)=:h(i)}
	\arrow["{f_i}"', hook, from=1-1, to=1-2]
	\arrow["{\mi (\eta_X\Lambda_i)}"', two heads, from=1-1, to=2-1]
	\arrow[from=1-2, to=2-2]
	\arrow[from=2-1, to=2-2]
	\arrow["\ulcorner"{anchor=center, pos=0.125, rotate=180}, draw=none, from=2-2, to=1-1]
\end{tikzcd}\]
    By Lemmas~\ref{lemma: monic diagram presheaf} and \ref{para:defining-Q}, we have that the colimits of $d,x$ and $q$ are $Y,X$ and $\La X$, respectively. Observe that $Y\contract{f}$ forms the apex of a cocone over the pushout diagram defining~$h$. Hence there exists a unique map $\Omega_i\colon h(i)\to Y\contract{f}$ making the following commute.
    {\small
    % https://q.uiver.app/#q=WzAsOCxbMSwxLCJ4KGkpIl0sWzEsMiwicShpKSJdLFsyLDEsImQoaSkiXSxbMiwyLCJoKGkpIl0sWzMsMCwiWSJdLFswLDAsIlgiXSxbMCwzLCJcXExhIFgiXSxbMywzLCJZXFxjb250cmFjdHtmfSJdLFswLDEsIlxcbWkgKFxcZXRhX1hcXExhbWJkYV9pKSIsMix7InN0eWxlIjp7ImhlYWQiOnsibmFtZSI6ImVwaSJ9fX1dLFswLDIsImZfaSIsMix7InN0eWxlIjp7InRhaWwiOnsibmFtZSI6Imhvb2siLCJzaWRlIjoidG9wIn19fV0sWzEsMywidl9pIiwyXSxbMiwzLCJ1X2kiXSxbMCw1LCIiLDIseyJzdHlsZSI6eyJ0YWlsIjp7Im5hbWUiOiJob29rIiwic2lkZSI6InRvcCJ9fX1dLFsyLDQsIiIsMix7InN0eWxlIjp7InRhaWwiOnsibmFtZSI6Imhvb2siLCJzaWRlIjoidG9wIn19fV0sWzEsNl0sWzUsNiwiXFxldGFfWCIsMSx7InN0eWxlIjp7ImhlYWQiOnsibmFtZSI6ImVwaSJ9fX1dLFs1LDQsImYiLDEseyJzdHlsZSI6eyJ0YWlsIjp7Im5hbWUiOiJob29rIiwic2lkZSI6InRvcCJ9fX1dLFs0LDcsIiIsMSx7InN0eWxlIjp7ImhlYWQiOnsibmFtZSI6ImVwaSJ9fX1dLFs2LDcsIiIsMSx7InN0eWxlIjp7InRhaWwiOnsibmFtZSI6Imhvb2siLCJzaWRlIjoidG9wIn19fV0sWzMsNywiXFxleGlzdHMgISBcXE9tZWdhX2kiLDAseyJzdHlsZSI6eyJib2R5Ijp7Im5hbWUiOiJkYXNoZWQifX19XSxbMywwLCIiLDEseyJzdHlsZSI6eyJuYW1lIjoiY29ybmVyIn19XV0=
\[\begin{tikzcd}
	X &&& Y \\
	& {x(i)} & {d(i)} \\
	& {q(i)} & {h(i)} \\
	{\La X} &&& {Y\contract{f}}
	\arrow["f"{description}, hook, from=1-1, to=1-4]
	\arrow["{\eta_X}"{description}, two heads, from=1-1, to=4-1]
	\arrow[two heads, from=1-4, to=4-4]
	\arrow[hook, from=2-2, to=1-1]
	\arrow["{f_i}"', hook, from=2-2, to=2-3]
	\arrow["{\mi (\eta_X\Lambda_i)}"', two heads, from=2-2, to=3-2]
	\arrow[hook, from=2-3, to=1-4]
	\arrow["{u_i}", from=2-3, to=3-3]
	\arrow["{v_i}"', from=3-2, to=3-3]
	\arrow[from=3-2, to=4-1]
	\arrow["\lrcorner"{anchor=center, pos=0.125, rotate=180}, draw=none, from=3-3, to=2-2]
	\arrow["{\exists ! \Omega_i}", dashed, from=3-3, to=4-4]
	\arrow[hook, from=4-1, to=4-4]
\end{tikzcd}\]
}
    We are required to show that $(h,\Omega)$ form a colimit cocone with apex $Y\contract{f}$. To this end, suppose that $\Sigma$ is a cocone over $h$ with some apex $Z$. Observe that by composing $\Sigma$ with the collections of maps $(u_i)$ and $(v_i)$ in turn, we obtain cocones over $q$ and $d$, both having apex $Z$. Since $Y$ and $\La X$ are the colimits of $d$ and $q$, respectively, we obtain unique maps $s$ and $t$ making the following commute.

{\small
    % https://q.uiver.app/#q=WzAsOSxbMSwxLCJ4KGkpIl0sWzEsMiwicShpKSJdLFsyLDEsImQoaSkiXSxbMiwyLCJoKGkpIl0sWzMsMCwiWSJdLFswLDAsIlgiXSxbMCwzLCJcXExhIFgiXSxbMywzLCJZXFxjb250cmFjdHtmfSJdLFs0LDMsIloiXSxbMCwxLCJcXG1pIChcXGV0YV9YXFxMYW1iZGFfaSkiLDIseyJzdHlsZSI6eyJoZWFkIjp7Im5hbWUiOiJlcGkifX19XSxbMCwyLCJmX2kiLDIseyJzdHlsZSI6eyJ0YWlsIjp7Im5hbWUiOiJob29rIiwic2lkZSI6InRvcCJ9fX1dLFsxLDMsInZfaSIsMl0sWzIsMywidV9pIl0sWzMsMCwiIiwxLHsic3R5bGUiOnsibmFtZSI6ImNvcm5lciJ9fV0sWzAsNSwiIiwyLHsic3R5bGUiOnsidGFpbCI6eyJuYW1lIjoiaG9vayIsInNpZGUiOiJ0b3AifX19XSxbMiw0LCIiLDIseyJzdHlsZSI6eyJ0YWlsIjp7Im5hbWUiOiJob29rIiwic2lkZSI6InRvcCJ9fX1dLFsxLDZdLFs1LDYsIlxcZXRhX1giLDEseyJzdHlsZSI6eyJoZWFkIjp7Im5hbWUiOiJlcGkifX19XSxbNSw0LCJmIiwxLHsic3R5bGUiOnsidGFpbCI6eyJuYW1lIjoiaG9vayIsInNpZGUiOiJ0b3AifX19XSxbNCw3LCIiLDEseyJzdHlsZSI6eyJoZWFkIjp7Im5hbWUiOiJlcGkifX19XSxbNiw3LCIiLDEseyJzdHlsZSI6eyJ0YWlsIjp7Im5hbWUiOiJob29rIiwic2lkZSI6InRvcCJ9fX1dLFszLDcsIlxcT21lZ2FfaSIsMV0sWzQsOCwiXFxleGlzdHMgISBzIiwxLHsiY3VydmUiOi0yLCJzdHlsZSI6eyJib2R5Ijp7Im5hbWUiOiJkYXNoZWQifX19XSxbNiw4LCJcXGV4aXN0cyAhIHQiLDEseyJjdXJ2ZSI6Mywic3R5bGUiOnsiYm9keSI6eyJuYW1lIjoiZGFzaGVkIn19fV0sWzMsOCwiZ19pIiwwLHsiY3VydmUiOi0zfV1d
\begin{equation}\begin{tikzcd}
	X &&& Y \\
	& {x(i)} & {d(i)} \\
	& {q(i)} & {h(i)} \\
	{\La X} &&& {Y\contract{f}} & Z
	\arrow["f"{description}, hook, from=1-1, to=1-4]
	\arrow["{\eta_X}"{description}, two heads, from=1-1, to=4-1]
	\arrow[two heads, from=1-4, to=4-4]
	\arrow["{\exists ! s}"{description}, curve={height=-12pt}, dashed, from=1-4, to=4-5]
	\arrow[hook, from=2-2, to=1-1]
	\arrow["{f_i}"', hook, from=2-2, to=2-3]
	\arrow["{\mi (\eta_X\Lambda_i)}"', two heads, from=2-2, to=3-2]
	\arrow[hook, from=2-3, to=1-4]
	\arrow["{u_i}", from=2-3, to=3-3]
	\arrow["{v_i}"', from=3-2, to=3-3]
	\arrow[from=3-2, to=4-1]
	\arrow["\lrcorner"{anchor=center, pos=0.125, rotate=180}, draw=none, from=3-3, to=2-2]
	\arrow["{\Omega_i}"{description}, from=3-3, to=4-4]
	\arrow["{g_i}", curve={height=-18pt}, from=3-3, to=4-5]
	\arrow[hook, from=4-1, to=4-4]
	\arrow["{\exists ! t}"{description}, curve={height=18pt}, dashed, from=4-1, to=4-5]
\end{tikzcd}\label{diag:final_main}
\end{equation}
}
Since $Y\contract{f}$ is a pushout of $\La X$ and $Y$ over $X$, we obtain a unique morphism from $Y\contract{f}$ to $Z$, commuting with the above, as required.
\end{proof}

\body{
Theorem~\ref{thm:local-contractions-give-correct-colimit} implies that one can start with any diagram $d$ with colimit $Y$ and, so long as $d$ is a diagram of monomorphisms, then one can produce a diagram whose colimit is the $(\La, \eta)$-contraction $Y\contract{f}$ of $Y$ along a monomorphism $f\colon X \hookrightarrow Y$. This is very close to being the result we have been working towards throughout this section; however, what it is missing is any guarantee that the diagram constructed in Theorem~\ref{thm:local-contractions-give-correct-colimit} is itself a diagram of monomorphisms. Fortunately, this can be fixed by taking pointwise image factorizations of its colimit cocone as the following corollary points out.  
}

\begin{corollary}\label{cor:main}
Let $\cat{C}$ be a mono-strong category with colimits. Suppose $f \colon X \hookrightarrow Y$ is a monomorphism in $\cat{C}$ and that $d \in \Ma Y$ is a monic diagram whose colimit is $Y$. Then the diagram of images obtained from the colimit cocone sitting above $(\Qa_X \circ\Ma(f))(d) +_{\Ma(f)(d)} d$ has $Y\contract{f}$ as its colimit. 
\end{corollary}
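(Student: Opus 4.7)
The plan is to combine Theorem~\ref{thm:local-contractions-give-correct-colimit} with Lemma~\ref{lemma:colimit-preseved-by-images}; once these are in hand, the corollary is essentially a two-step composition. First I would invoke Theorem~\ref{thm:local-contractions-give-correct-colimit} to conclude that the pointwise pushout
\[
h \;:=\; (\Qa_X \circ \Ma(f))(d) \;+_{\Ma(f)(d)}\; d,
\]
computed in $\cat{C}^\cat{J}$, is itself a $\cat{J}$-shaped diagram whose colimit is $Y\contract{f}$. Since $\cat{C}$ is assumed to have all colimits, this colimit comes equipped with a canonical colimit cocone $\Omega \colon h \Rightarrow Y\contract{f}$.

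Next I would pass to image factorizations of each component $\Omega_i$, which exist because $\cat{C}$ is adhesive and hence admits epi-mono image factorizations (as recalled in Section~2). This produces the diagram of images $\imageObj(\Omega, h) \colon \cat{J} \to \cat{C}$ exactly as in Definition~\ref{def:diagram-of-images}, with transition maps supplied by the universal-property argument of Lemma~\ref{lemma:images_map}. The conclusion then follows immediately from Lemma~\ref{lemma:colimit-preseved-by-images}: since $\Omega$ is a colimit cocone over $h$ with apex $Y\contract{f}$, the associated diagram of images $\imageObj(\Omega, h)$ has the same colimit, namely $Y\contract{f}$.

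I do not anticipate a genuine obstacle, as all the nontrivial work has been carried out upstream. The only conceptual subtlety worth flagging in the writeup is \emph{why} one bothers to factorize at all: the diagram $h$ produced by Theorem~\ref{thm:local-contractions-give-correct-colimit} is not guaranteed to be a diagram of monomorphisms, so $h$ need not lie in $\Ma(Y\contract{f})$, whereas $\imageObj(\Omega, h)$ is monic by construction (each $\image(\Omega_i)$ is a monomorphism, and the induced transition maps from Lemma~\ref{lemma:images_map} are monic by the mono-triangle argument of Paragraph~\ref{para:mono-triangle}). Thus the real content of Corollary~\ref{cor:main} — beyond Theorem~\ref{thm:local-contractions-give-correct-colimit} — is that passage to images restores the monic property while preserving the colimit, yielding a bona fide element of $\Ma(Y\contract{f})$ that is amenable to iteration of the lasso-contraction construction.
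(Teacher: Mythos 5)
Your proof is correct and follows exactly the paper's own route: the paper's entire proof of Corollary~\ref{cor:main} is ``Combine Lemma~\ref{lemma:colimit-preseved-by-images} and Theorem~\ref{thm:local-contractions-give-correct-colimit},'' which is precisely your two-step composition. Your extra remark about why the passage to images is needed (restoring monomorphy of the transition maps) also matches the paper's motivating paragraph preceding the corollary.
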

\begin{proof}
Combine Lemma~\ref{lemma:colimit-preseved-by-images} and Theorem~\ref{thm:local-contractions-give-correct-colimit}.
\end{proof}

\body{
Finally, we prove that our construction is equivalent to the simpler one given in Theorem~\ref{thm:decomp-contraction}.
}
\body{
\begin{proof}[Proof of Theorem~\ref{thm:decomp-contraction}]
    We will show that $d\contract{f}$ is identical to the diagram obtained in Corollary~\ref{cor:main}. Recall Diagram~\ref{diag:final_main}. Since pushouts preserve epimorphisms, we have that the maps $u_i\colon d(i)\to h(i)$ are epic. Hence by Lemma~\ref{image_epi_precomp} the images of $\Omega_i$ and $\Omega_iu_i=\eta_X^\sharp\Lambda_i$ coincide for each $i\in\cat{J}$. Since this is how both diagrams are defined, this completes our proof.
\end{proof}
}\label{para:proof-of-thm:decomp-contraction}

\body{\begin{proof}[Proof of Theorem~\ref{thm:main}]
    One can readily verify that the constructions in Lemma~\ref{lemma: monic diagram presheaf} and Lemma~\ref{lemma:definition-of-Q} yield diagrams of the same shape as the diagram $d$ one starts with. In particular, this means that all of the results above can be specialized to the case in which one only seeks to construct objects via diagrams whose shape is limited to a special class (for example, trees). Furthermore, observe that both Lemma~\ref{lemma: monic diagram presheaf} and Lemma~\ref{lemma:definition-of-Q} preserve diagrams of monomorphisms (i.e. they send a diagram $d$ whose arrows are all monic to a new diagram whose arrows are also all monic). These two facts imply that all of the results above can be specialized to the case in which $d$ is a structured decomposition of some given shape (for example, a tree decomposition of graphs).

    All of this implies that, if instead of considering diagrams of any shape one only cares about structured decompositions, then all the results of this section can actually be stated for lassos which are not strong (namely lassos which preserve pushouts of monomorphisms, but not colimits in general, cf. Definitions~\ref{def:lasso} and~\ref{def:strong-lasso}). 
\end{proof}}\label{para:proof-of-thm:main}

\begin{remark}
    As in Section~\ref{sec:cat_of_lassos}, we could investigate categories of strong lassos. By Lemma~\ref{prop:cc_coco} and the surrounding results, we see that all the lassos on $\cat{Grph}$ are strong. However in Appendix~\ref{appendix:reflexive_graphs} we show that this is not always the case. In particular, see Example~\ref{eg:delooping_almost_bad} for a strictly weak lasso.
\end{remark}

\begin{remark}
    In the statement of Theorem~\ref{thm:decomp-contraction}, we assume that the morphism $f\colon X\hookrightarrow Y$ is monic. However we don't use this fact anywhere in the proof. This suggests that we could define contractions differently, and allow for contractions along any morphism. However, it is easy to check that in $\cat{Grph}$ a contraction defined in this way is always equivalent to a contraction defined along a monomorphism. Indeed, this is because given any map $f\colon X\rightarrow Y$, the connected components of $X$ are exactly the preimages of the connected components of $Y$ which have nonempty preimage. Hence the contraction along $\image f$ is isomorphic to the contraction along $f$. It is perhaps an interesting question to ask whether this holds in general in mono-strong categories, or not.
\end{remark}

\section{Discussion \& Open Problems}\label{sec:outlook}
\body{
Here we introduce the notion of a \textit{lasso}. This machinery allows us to systematically identify classes of epimorphisms -- called \textit{lasso contractions} -- in a category $\cat{C}$ along which one can push decompositions in a meaningful way. In more detail, lasso contractions have the property that, given any epimorphism $f \colon X \twoheadrightarrow Y$ and any tree-shaped structured decomposition $d_X$ of $X$, if $f$ arises as a lasso contraction, then one can obtain a decomposition $d_Y$ such that: (1) $d_Y$ is a decomposition with the same shape as $d_X$ and (2) $d_Y$ is a decomposition of $Y$. 
}
\body{
Instantiating our results in the category $\cat{Grph}$ of directed multigraphs, the above results can be used to answer the following question: for which classes of surjective homomorphisms $f \colon H \twoheadrightarrow G$ can one always push a tree decomposition for $H$ along $f$ to obtain a tree decomposition for $G$ of the same shape? In $\cat{Grph}$, as it turns out, \textit{contraction maps} are the only class of epimorphisms along which one can push decompositions while leaving their shape unchanged. Thus, in this particular instance, our general machinery yields a canonicity result which characterizes contraction maps. 
}
\body{ 
Not only is our category theoretic approach ideal for proving canonicity results,\footnote{This is a very well-known strength of category theory: since all definitions are stated up to isomorphism and since constructions are done via universal properties, there is a sense in which all the results one obtains are canonical.} but it also enables out proof techniques to be adapted to wide range of mathematical objects all at once. For example our results apply in any mono-strong category such as directed multigraphs, hypergraphs, Petri nets, circular port graphs, half-edge graphs, databases, simplicial sets or indeed any (co)presheaf category. 
}
\body{
Interestingly, although we can prove the above canonicity result for graph contractions, if one moves away from the category $\cat{Grph}$ of directed multigraphs to other more embellished categories of categorical data, a much richer story begins to appear. For instance, already in the category of \textit{reflexive graphs} one finds that there are many more kinds of lassos, each one defining a different class of epimorphisms along which (tree) decompositions can be pushed forward. Moreover, as we continue to add attributes to the combinatorial object under study, we observe that the corresponding categories of lassos appear to become more and more involved. This suggests that there should be a relationship between the defining schema of the combinatorial data and the resulting category of lassos. Understanding this relationship would be particularly useful in studying lassos on much more complicated categories of copresheaves than the one studied here. Thus we state the following conjecture. 
}

\begin{conjecture}\label{conj:lasso-functoriality}
There is a contravariant functor $\cat{Lasso} \colon \cat{Cat} \to \cat{Cat}$ which takes any functor $F \colon \cat{S} \to \cat{T}$ between schemata $\cat{S}$ and $\cat{T}$ to a functor $\cat{Lasso}(\cat{Set}^\cat{T}) \to \cat{Lasso}(\cat{Set}^\cat{S})$ between the categories of lassos on the associated copresheaf categories.
\end{conjecture}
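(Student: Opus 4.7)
The plan is to exploit the standard adjoint triple $F_! \dashv F^* \dashv F_*$ associated with any functor $F \colon \cat{S} \to \cat{T}$ between small categories, where $F^* \colon \cat{Set}^{\cat{T}} \to \cat{Set}^{\cat{S}}$ is precomposition by $F$. On objects, set $\cat{Lasso}(\cat{S}) := \cat{Lasso}(\cat{Set}^{\cat{S}})$. For the action on morphisms, given such an $F$ and a lasso $(\La, \eta)$ on $\cat{Set}^{\cat{T}}$, define $\cat{Lasso}(F)(\La, \eta) := (\La', \eta')$ where $\La' := F^* \circ \La \circ F_!$ and $\eta'$ is the vertical composite of the unit $\iota \colon \id_{\cat{Set}^{\cat{S}}} \Rightarrow F^* F_!$ with the whiskered natural transformation $F^* \eta F_! \colon F^* F_! \Rightarrow F^* \La F_!$. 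A morphism of lassos $\phi$ with components $\phi_A$ is sent to the morphism whose component at $X \in \cat{Set}^{\cat{S}}$ is $F^*(\phi_{F_! X})$.

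Functoriality would then be routine bookkeeping using the standard identities $(G \circ F)_! = G_! \circ F_!$ and $(G \circ F)^* = F^* \circ G^*$ together with the compatibility of the induced units, yielding both $\cat{Lasso}(G \circ F) = \cat{Lasso}(F) \circ \cat{Lasso}(G)$ and $\cat{Lasso}(\id_{\cat{S}}) = \id_{\cat{Lasso}(\cat{Set}^{\cat{S}})}$. The commuting triangle condition for a morphism of lassos is preserved because $F^*$ acts componentwise on natural transformations, and one can check directly that $\eta'_2 = \cat{Lasso}(F)(\phi) \circ \eta'_1$ whenever the analogous equation holds upstairs.

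The hard part is verifying that $(\La', \eta')$ is actually a lasso. For condition~\ref{cond:lasso-1}, $\La'$ must preserve pushouts of monic spans. Both $F^*$ (being simultaneously a left and right adjoint) and $\La$ behave well on such diagrams, but $F_!$ is only a left adjoint, and in general fails to preserve monomorphisms; hence a monic pushout in $\cat{Set}^{\cat{S}}$ need not be sent by $F_!$ to a monic pushout in $\cat{Set}^{\cat{T}}$, so the preservation hypothesis on $\La$ does not directly apply. For condition~\ref{cond:lasso-2}, while $F^*(\eta_{F_! X})$ is pointwise epic (since $F^*$ is computed componentwise and epimorphisms in presheaf categories are pointwise surjections), the unit $\iota_X$ need not be epic in general, so the composite $\eta'_X$ may also fail to be epic.

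A natural resolution is to restrict the domain of $\cat{Lasso}$ to a subcategory of $\cat{Cat}$ on which the construction is well behaved---for instance, functors $F$ for which $F_!$ is flat (and thus preserves monomorphisms) and the unit $\iota$ has pointwise epic components (such as fully faithful, essentially surjective functors, or suitable discrete opfibrations). An alternative would be to weaken the definition of a lasso so as to accommodate the lax behaviour of $F_!$, landing instead in a double- or $2$-categorical refinement of $\cat{Lasso}(\cat{C})$, in the spirit of the laxity already encountered in Lemma~\ref{lemma:definition-of-Q}.
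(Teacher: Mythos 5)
This statement is one of the paper's stated \emph{conjectures} in the Discussion \& Open Problems section: the paper offers no proof of it, so there is no argument of the authors' to compare yours against. What matters, then, is whether your attempt actually closes the conjecture, and by your own account it does not. The candidate construction $\La' := F^* \circ \La \circ F_!$ with $\eta'$ built from the unit of $F_! \dashv F^*$ is the natural first move, but you correctly observe that it fails both lasso axioms in general: $F_!$ (left Kan extension) need not preserve monomorphisms or pushouts of monic spans, so the hypothesis that $\La$ preserves monic pushouts cannot be applied to $\La'$; and the unit $X \to F^* F_! X$ need not be epic, so $\eta'$ can fail condition~\ref{cond:lasso-2}. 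Your proposed remedies --- restricting to a subcategory of $\cat{Cat}$ where $F_!$ is better behaved, or weakening the definition of lasso to something lax --- each prove a \emph{different} statement from the one conjectured, which asks for a contravariant functor defined on all of $\cat{Cat}$ landing in the categories of lassos as defined in Definition~\ref{def:lasso}.

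There is a further gap you do not flag: even granting that $(\La', \eta')$ were always a lasso, strict functoriality would fail, since $(G \circ F)_!$ agrees with $G_! \circ F_!$ only up to canonical natural isomorphism, not on the nose. At best your construction yields a pseudofunctor (or would require a coherent strictification), which again falls short of the conjecture as literally stated. In short: this is a reasonable exploration of the obvious candidate and a useful diagnosis of why it breaks, but it is not a proof, and the obstacles you identify suggest that any resolution of Conjecture~\ref{conj:lasso-functoriality} will need either a genuinely different transport construction or a reformulation of the conjecture itself.
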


\body{
A further open problem, possibly a more challenging one, is to determine whether pushing decompositions along morphisms is a functorial operation. Slightly more formally, one would like an analogue of Theorem~\ref{lemma: monic diagram presheaf} (or Corollary~\ref{cor:submon_extension}) for contractions; namely a \textit{covariant} functor which assigns to each object of a category the set of all of its decompositions and which acts functorially on lasso contractions. Obviously there is significant impediment to such a result: we do not know whether, given a lasso $(\La, \eta)$ on a category $\cat{C}$, it is even possible to construct a category $\cat{C}_{(\La, \eta)}$ whose objects are those of $\cat{C}$ but whose morphisms are only those epimorphisms in $\cat{C}$ which arise as $(\La, \eta)$-contractions.
The technical hurdle one needs to overcome is that of proving that lasso-contractions can be composed. Indeed, although the special case of graphs is very easy\footnote{It is well known that, if $f \colon G \twoheadrightarrow H$ and $g \colon H \to J$ are graph contractions, then $gf \colon G \twoheadrightarrow J$ also is.}, the general question of determining whether the composite of two $(\La, \eta)$-contractions is itself a $(\La, \eta)$-contraction has resisted our efforts thus far. We state this as the following conjecture. 
}

\begin{conjecture}\label{conj:contraction-composition}
Let $(\La, \eta)$ be a lasso on a category $\cat{C}$. Consider any two monomorphisms $f_1 \colon X_1 \hookrightarrow Y$ and $f_2 \colon X_2 \to Y\contract{f}$ as in the following diagram. 
% https://q.uiver.app/#q=WzAsNyxbMCwwLCJZIl0sWzEsMSwiWF8xIl0sWzIsMSwiXFxMYShYXzEpIl0sWzMsMCwiWVxcY29udHJhY3R7Zn0iXSxbNCwxLCJYXzIiXSxbNSwxLCJcXExhKFhfMSkiXSxbNiwwLCIoWVxcY29udHJhY3R7Zl8xfSlcXGNvbnRyYWN0e2ZfMn0iXSxbMSwyLCJcXGV0YV97WF8xfSIsMix7InN0eWxlIjp7ImhlYWQiOnsibmFtZSI6ImVwaSJ9fX1dLFsxLDAsImZfMSIsMCx7InN0eWxlIjp7InRhaWwiOnsibmFtZSI6Imhvb2siLCJzaWRlIjoidG9wIn19fV0sWzAsMywie1xcY29sb3J7ZGlhZ3JhbUJsdWV9XFxldGFee1xcI31fe1hfMX19IiwwLHsic3R5bGUiOnsiaGVhZCI6eyJuYW1lIjoiZXBpIn19fV0sWzIsM10sWzMsNiwie1xcY29sb3J7ZGlhZ3JhbUJsdWV9XFxldGFee1xcI31fe1hfMn19IiwwLHsic3R5bGUiOnsiaGVhZCI6eyJuYW1lIjoiZXBpIn19fV0sWzQsMywiZl8yIiwwLHsic3R5bGUiOnsidGFpbCI6eyJuYW1lIjoiaG9vayIsInNpZGUiOiJ0b3AifX19XSxbNCw1LCJcXGV0YV97WF8yfSIsMix7InN0eWxlIjp7ImhlYWQiOnsibmFtZSI6ImVwaSJ9fX1dLFs1LDZdXQ==
\[\begin{tikzcd}[sep=small]
	Y &&& {Y\contract{f}} &&& {(Y\contract{f_1})\contract{f_2}} \\
	& {X_1} & {\La(X_1)} && {X_2} & {\La(X_1)}
	\arrow["{{\color{diagramBlue}\eta^{\#}_{X_1}}}", two heads, from=1-1, to=1-4]
	\arrow["{{\color{diagramBlue}\eta^{\#}_{X_2}}}", two heads, from=1-4, to=1-7]
	\arrow["{f_1}", hook, from=2-2, to=1-1]
	\arrow["{\eta_{X_1}}"', two heads, from=2-2, to=2-3]
	\arrow[from=2-3, to=1-4]
	\arrow["{f_2}", hook, from=2-5, to=1-4]
	\arrow["{\eta_{X_2}}"', two heads, from=2-5, to=2-6]
	\arrow[from=2-6, to=1-7]
\end{tikzcd}\]
Then there is a subobject $f \colon X \hookrightarrow Y$ whose lasso contraction 
% https://q.uiver.app/#q=WzAsNCxbMCwwLCJZIl0sWzIsMSwiWCJdLFs0LDEsIlxcTGEoWF8xKSJdLFs2LDAsIllcXGNvbnRyYWN0e2Z9Il0sWzEsMiwiXFxldGFfe1h9IiwyLHsic3R5bGUiOnsiaGVhZCI6eyJuYW1lIjoiZXBpIn19fV0sWzEsMCwiZiIsMCx7InN0eWxlIjp7InRhaWwiOnsibmFtZSI6Imhvb2siLCJzaWRlIjoidG9wIn19fV0sWzAsMywie1xcY29sb3J7ZGlhZ3JhbUdvbGR9XFxldGFee1xcI31fe1h9fSIsMCx7InN0eWxlIjp7ImhlYWQiOnsibmFtZSI6ImVwaSJ9fX1dLFsyLDNdXQ==
\[\begin{tikzcd}[sep=small]
	Y &&&&&& {Y\contract{f}} \\
	&& X && {\La(X_1)}
	\arrow["{{\color{diagramGold}\eta^{\#}_{X}}}", two heads, from=1-1, to=1-7]
	\arrow["f", hook, from=2-3, to=1-1]
	\arrow["{\eta_{X}}"', two heads, from=2-3, to=2-5]
	\arrow[from=2-5, to=1-7]
\end{tikzcd}\]
satisfies $Y\contract{f} = (Y\contract{f_1})\contract{f_2}$ and ${\color{diagramGold}\eta_X^\#} = {\color{diagramBlue}\eta_{X_2}^\#\eta_{X_1}^\#}$.
\end{conjecture}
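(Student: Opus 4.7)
The natural plan is to construct $X$ as the union, inside the subobject lattice of $Y$, of $X_1$ with the pullback of $X_2$ along the first contraction. Concretely, form the pullback $\hat{X}_2 := X_2 \times_{Y\contract{f_1}} Y$; since $f_2$ is monic, so is $\hat{X}_2 \hookrightarrow Y$. Let $P := X_1 \times_Y \hat{X}_2$ denote their intersection, and define $X := X_1 +_P \hat{X}_2$. Both legs of the span $X_1 \leftarrow P \to \hat{X}_2$ are monomorphisms (as pullbacks of monomorphisms), so by adhesivity the induced map $f \colon X \to Y$ is a monomorphism. This $X$ is the candidate; in the case of $\cat{Grph}$ it is precisely the union of $X_1$ with the preimage of $X_2$ in $Y$, whose connected components match the fibres of $\eta_{X_2}^\# \eta_{X_1}^\#$.

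The first step is to compute $\La X$: since $\La$ preserves monic pushouts (\ref{cond:lasso-1}) and $X = X_1 +_P \hat{X}_2$ is one, we obtain $\La X \cong \La X_1 +_{\La P} \La \hat{X}_2$. The second step is to decompose $X_2$ itself via the Van Kampen property of the adhesive pushout $Y\contract{f_1} = Y +_{X_1} \La X_1$: pulling $f_2$ back against this pushout square yields $X_2 \cong \hat{X}_2 +_P R$ for $R := X_2 \times_{Y\contract{f_1}} \La X_1$. Combining these decompositions with repeated pushout pasting, one then shows that the two pushouts
\[
Y +_X \La X \qquad \text{and} \qquad (Y +_{X_1} \La X_1) +_{X_2} \La X_2
\]
coincide by identifying their universal properties: each is the colimit of a cube whose faces can be reorganised into the other. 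The equation $\eta_X^\# = \eta_{X_2}^\# \eta_{X_1}^\#$ then follows from uniqueness of the mediating arrow out of $Y\contract{f}$, since both composites define the same cocone.

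The principal obstacle is the computation of $\La X_2$. The pasting argument above appears to need $\La X_2 = \La \hat{X}_2 +_{\La P} \La R$, i.e.\ that $\La$ preserves the Van Kampen decomposition $X_2 = \hat{X}_2 +_P R$. However the map $P \to R$ is a restriction of $\eta_{X_1}$, which is only epic and not generally monic, so this is not a monic pushout, and conditions \ref{cond:lasso-1}--\ref{cond:lasso-2} guarantee preservation only for monic pushouts. Three plausible routes forward present themselves: strengthen the hypothesis to a strong lasso (Definition~\ref{def:strong-lasso}), where $\La$ preserves colimits of monic diagrams; find a cleverer candidate for $X$ that avoids the non-monic leg $P \to R$, perhaps via an image factorisation of the composite $\eta_{X_2}^\# \eta_{X_1}^\#$ paired with a kernel-pair construction; or specialise to $\cat{Grph}$, where Proposition~\ref{prop:cc_coco} shows that the connected-components lasso is cocontinuous and hence satisfies the required preservation on the nose. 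The first yields a weakened but clean conjecture; only the second would settle Conjecture~\ref{conj:contraction-composition} in full generality.
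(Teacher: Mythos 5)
Be aware first that the statement you are proving is stated in the paper as Conjecture~\ref{conj:contraction-composition} and is explicitly left open: the authors write that the general question of whether the composite of two $(\La,\eta)$-contractions is itself a $(\La,\eta)$-contraction ``has resisted our efforts thus far.'' So there is no proof in the paper to compare against, and your proposal does not close the conjecture either --- as you yourself observe in your final paragraph. That said, your candidate $X = X_1 +_P \hat X_2$ (the union in $\sub(Y)$ of $X_1$ with the preimage of $X_2$ under the first contraction) is the natural one and is provably correct in $\cat{Grph}$, and your use of the Van Kampen property to decompose $X_2$ as $\hat X_2 +_P R$ is legitimate in an adhesive category, since the pushout $Y\contract{f_1} = Y +_{X_1} \La X_1$ is along the monomorphism $f_1$ and is therefore Van Kampen.

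The genuine gap is exactly where you locate it, but you should be more careful about your first proposed repair. The span $\hat X_2 \hookleftarrow P \rightarrow R$ has one monic leg (a pullback of $f_1$) and one leg obtained by pulling back $\eta_{X_1}$, which is generically epic and not monic. Condition~\ref{cond:lasso-1} only guarantees that $\La$ preserves pushouts of \emph{monomorphic spans}, and Definition~\ref{def:strong-lasso} only upgrades this to colimits of \emph{monic diagrams}; neither covers a pushout along a mono whose other leg is epic. So passing to strong lassos does not by itself rescue the computation of $\La X_2$: you would need a preservation axiom for mixed mono/epi pushouts, a strictly stronger and differently flavoured hypothesis than anything the paper assumes. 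Your diagnosis of the obstruction is therefore accurate and matches the reason the authors leave this as a conjecture, but what your proposal actually establishes is only the graph-theoretic special case (already noted as folklore in the paper) together with a conditional statement under an unstated extra preservation hypothesis; the conjecture itself remains open.
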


\paragraph{Acknowledgments:} we wish to thank Kevin Carlson for feedback and suggestions on a preliminary version of this article.

\appendix

\section{The category of lassos on reflexive graphs}
\label{appendix:reflexive_graphs}

\body{
Many of the proofs for $\cat{Grph}$ adapt to the category $\cat{RGrph}$. Indeed, the connected components lasso $(\cat{cc},\pi)$ restricts to a lasso on $\cat{RGrph}$, which we will also denote by $(\cat{cc},\pi)$. However, not all of the proofs transfer. In particular, Lemma~\ref{lemma:oh_no} relies on the existence of graphs without loops and so it fails, along with its consequences, namely Lemma~\ref{lemma:cant_identify_edges} and Corollary~\ref{cor:must_identify_vertices}. We prove the following alternatives.
}

\begin{lemma}\label{lemma:parallel_edge_particle}
    Let $(\Fa,\alpha)$ be a lasso. Suppose that the components of $\alpha$ are vertex-trivial. If for some graph $G$, we have that $\alpha_G$ identifies some pair of parallel non-loop edges in $G$, then $\alpha$ identifies all pairs of parallel edges in all graphs (including pairs of parallel loops).
\end{lemma}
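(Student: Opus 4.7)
The plan is to adapt the "gadget" strategy used in Lemma~\ref{lemma:condition_for_double_loop_particle}, but in place of the one-vertex graph $\twoloop$ I will use a two-vertex gadget that encodes a pair of parallel non-loop edges. Concretely, let $P$ denote the reflexive graph with vertex set $\{a,b\}$, required loops $\ell_a,\ell_b$, and two parallel non-loop edges $p_1,p_2\colon a\to b$. The goal is to establish $\alpha_P(p_1)=\alpha_P(p_2)$ using the hypothesis, then propagate that identification everywhere.

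First I would try to build a graph homomorphism $\varphi\colon G\to P$ sending $e\mapsto p_1$ and $e'\mapsto p_2$; if one exists, naturality of $\alpha$ immediately yields $\alpha_P(p_1)=\alpha_P(p_2)$. Since $P$ is too sparse to absorb arbitrary $G$ (e.g.\ $G$ may contain an edge going from $v$ back to $u$, and $P$ has no such edge), I would instead work with a larger gadget $P^+$ obtained from $P$ by adding back-edges and extra loops at each vertex, chosen so that $\varphi\colon G\to P^+$ is defined: send $u\mapsto a$, $v\mapsto b$, route required loops to required loops, send every other edge into the appropriate auxiliary loop or back-edge in $P^+$, and in particular send $e\mapsto p_1$, $e'\mapsto p_2$. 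By naturality this gives $\alpha_{P^+}(p_1)=\alpha_{P^+}(p_2)$. The descent from $P^+$ back to $P$ would then be achieved by exhibiting $P$ as a retract of a suitable version of $P^+$, or by using a pushout decomposition preserved by $\La$: since retractions (splits) are preserved by any functor, an identification in $\Fa(P^+)$ would descend to one in $\Fa(P)$ via $\Fa(\rho)$ for a retraction $\rho\colon P^+\twoheadrightarrow P$.

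Once $\alpha_P(p_1)=\alpha_P(p_2)$ is in hand, propagation is easy. For any graph $H$ with parallel non-loop edges $f_1,f_2\colon x\to y$ (so $x\neq y$), I would define $\psi\colon P\to H$ by $a\mapsto x$, $b\mapsto y$, $\ell_a\mapsto\ell_x$, $\ell_b\mapsto\ell_y$, $p_i\mapsto f_i$; this is always a valid reflexive graph homomorphism, and naturality of $\alpha$ then gives $\alpha_H(f_1)=\alpha_H(f_2)$. For parallel loops at a vertex $w$ in $H$, I would run exactly the same argument using the $\twoloop$ gadget in place of $P$ (constructing $G\to\twoloop$ by collapsing all vertices to the single vertex and sending $e,e'$ to the required loop and the extra loop respectively), obtaining $\alpha_{\twoloop}(\ell)=\alpha_{\twoloop}(m)$, and then mapping $\twoloop\to H$ to each candidate loop at $w$ in turn to identify them with $\ell_w$, hence with one another.

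The hard part will be the descent step: constructing the retraction $P^+\twoheadrightarrow P$ compatibly with the back-edges we had to introduce to accommodate $G$. A back-edge $b\to a$ in $P^+$ has nowhere natural to go in $P$, since $P$ has no such edge and the two endpoints must be sent to distinct vertices in order to preserve $p_1,p_2$ as non-loop edges. The plan to circumvent this would be to choose $P^+$ more cleverly---e.g.\ so that any ``backwards'' structure in $G$ gets absorbed into auxiliary components of $P^+$ that split off as coproduct factors, since coproducts (as monic pushouts over the initial object) are preserved by $\La$ and admit obvious retractions back to $P$. Verifying this would rely on the pushout preservation axiom \ref{cond:lasso-1} together with the fact that in an adhesive category monic pushouts have monic pushout legs.
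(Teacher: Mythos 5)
Your overall architecture matches the paper's: augment the two-parallel-edges gadget $\reflexiveparallel$ with a back-edge so that the given $G$ admits a homomorphism onto it sending $e,e'$ to $p_1,p_2$, use naturality to force the identification in the augmented gadget, descend to $\reflexiveparallel$, and then propagate into an arbitrary $H$ by mapping $\reflexiveparallel$ in. Your $P^+$ is essentially the paper's $\suprisereverseedge$, and your propagation step is fine (your separate $\twoloop$ treatment of parallel loops even works as a self-contained alternative to the paper's, which instead maps $\reflexiveparallel$ onto a single vertex of $H$). The genuine gap is the descent step, and the concrete plan you commit to cannot work. You correctly diagnose that no retraction $P^+\twoheadrightarrow P$ exists because the back-edge has nowhere to go; but your fallback --- choosing $P^+$ so that the backwards structure lands in auxiliary \emph{coproduct} factors --- is impossible for the very same reason. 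A backwards edge of $G$ runs from a vertex forced to map to $b$ to a vertex forced to map to $a$ (these images are dictated by $e\mapsto p_1$), so its image in $P^+$ is an edge incident to both $a$ and $b$ and lies in the same connected component as $p_1$ and $p_2$; it can never be split off as a disjoint summand, and the hypothesis does not let you choose a $G$ free of such edges.

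The device you need is the one you mention only in passing and then abandon: a monic pushout that is \emph{not} a coproduct. The paper exhibits $\suprisereverseedge$ as a pushout over the discrete reflexive graph $\twodiscrete$ on two vertices --- gluing two subgraphs along their shared vertex set, which is a span of monomorphisms --- where each piece has no two edges with the same source and target, so that vertex-triviality forces $\Fa$ to fix it. Applying Condition~\ref{cond:lasso-1} to this pushout and comparing against the already-determined $\Fa(\suprisereverseedge)$ (in which $p_1$ and $p_2$ are identified by naturality along $G\to\suprisereverseedge$) pins down $\Fa(\reflexiveparallel)\cong\edgewithloops$, i.e.\ $\alpha_{\reflexiveparallel}$ identifies the two parallel edges. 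Without some such use of pushouts along $\twodiscrete$, your argument establishes the identification only in $P^+$, and from there it cannot be propagated to graphs $H$ that lack a back-edge between the endpoints of their parallel pair.
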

\begin{proof}
Suppose there is a graph $G$ containing a pair of parallel non-loop edges $e$ and $e'$ such that $e$ and $e'$ are identified under $\alpha_G\colon G\twoheadrightarrow\Fa(G)$. Denote by $\reflexiveparallel$ the graph with two vertices, a loop at each vertex and a pair of parallel edges spanning the vertices and denote by $\suprisereverseedge$ the graph obtained from $\reflexiveparallel$ by adding a new edge in the opposite direction to the parallel edges.

Observe that there exists a homomorphism $f\colon G\to\suprisereverseedge$ so that $f(e)$ and $f(e')$ are exactly the pair of parallel edges of $\suprisereverseedge$. Then by the naturality of $\alpha$, the fact that $\alpha$ is the identity on vertex sets and the fact that $\alpha$ is epic, we have that: \begin{enumerate*}
    \item $\Fa$ is the identity on discrete reflexive graphs; 
    \item $\Fa$ preserves the single edge graph (i.e. $\Fa(\edgewithloops) = \edgewithloops$); and 
    \item $\Fa(\suprisereverseedge)$ is the graph $\fulltwo$, having two vertices, with loops, and one edge in each direction between them. 
\end{enumerate*} Consider the fact that $\suprisereverseedge$ is the monic pushout of $\fulltwo$ and $\edgewithloops$. Since $\Fa$ preserves monic pushouts we get that $\Fa(\reflexiveparallel)$ is  isomorphic to $\edgewithloops$.

Let $H$ be an arbitrary graph and suppose that there is a pair $e$, $e'$ of parallel edges (possibly loops) in $H$. Consider the homomorphism $h\colon\reflexiveparallel\to H$ that sends one parallel edge to $e$ and the other to $e'$. Now by the naturality of $\alpha$ and our analysis above, we get that $\alpha_H$ identifies the edges $e$ and $e'$. Since $H$ and the choice of parallel edges were arbitrary, this completes our proof.
\end{proof}

\begin{proposition}\label{prop:smooth_is_a_functor}
    Let $\cat{smooth}$ be the map $\cat{RGrph}\to\cat{RGrph}$ that sends each graph $G=(V,E)$ to a graph $\cat{smooth}(G)$ with vertex set $V$ and edge set obtained from~$E$ by identifying parallel edges (including parallel loops). Then the map $\cat{smooth}$ extends uniquely to a functor $\cat{smooth}\colon\cat{RGrph}\to\cat{RGrph}$ such that there is a natural transformation $\id_{\cat{RGrph}}\Rightarrow\cat{smooth}$.
\end{proposition}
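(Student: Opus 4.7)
The plan is to define $\cat{smooth}$ on morphisms in the obvious way and then verify functoriality, construct the natural transformation explicitly as the pointwise quotient, and deduce uniqueness from naturality. Concretely, given a homomorphism $f \colon G \to H$ of reflexive graphs, I would define $\cat{smooth}(f)$ to be $f$ on vertices, and on edges to send the parallel-equivalence class $[e]$ of an edge $e \in G(E)$ to $[f(e)]$. The first step is the well-definedness check: if $e, e'$ are parallel in $G$ (i.e.\ $s(e)=s(e')$ and $t(e)=t(e')$), then $s(f(e))=f(s(e))=f(s(e'))=s(f(e'))$ and similarly for $t$, so $f(e)$ and $f(e')$ are parallel in $H$ and thus represent the same class in $\cat{smooth}(H)$.

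Next I would confirm that $\cat{smooth}(f)$ is itself a morphism of reflexive graphs. Compatibility with $s$ and $t$ is immediate from the formula. For the distinguished-loop map $l$, note that in $\cat{smooth}(G)$ the distinguished loop at a vertex $v$ is the class $[l_G(v)]$ (which now absorbs any other loops at $v$); since $f$ preserves distinguished loops, $\cat{smooth}(f)([l_G(v)]) = [f(l_G(v))] = [l_H(f(v))]$, as required. Functoriality — preservation of identities and composition — then follows directly from the definition $[e] \mapsto [f(e)]$, since composition of representatives composes the classes.

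The natural transformation $\sigma \colon \id_{\cat{RGrph}} \Rightarrow \cat{smooth}$ is then the pointwise quotient: $\sigma_G$ is the identity on vertices and sends each edge $e$ to its class $[e]$. This is evidently a morphism of reflexive graphs, and the naturality square for $f \colon G \to H$ commutes on vertices trivially and on edges because both composites send $e \in G(E)$ to $[f(e)] \in \cat{smooth}(H)(E)$. Finally, uniqueness falls out of naturality: if $\Fa$ is any functor agreeing with $\cat{smooth}$ on objects and equipped with a natural transformation $\sigma' \colon \id \Rightarrow \Fa$, then for each $G$ the component $\sigma'_G$ must be epic on the underlying sets (being a quotient identifying exactly parallel edges — this is forced by the object-level description of $\cat{smooth}(G)$ up to iso); naturality of $\sigma'$ then forces $\Fa(f)([e]) = \sigma'_H(f(e)) = [f(e)]$, so $\Fa = \cat{smooth}$ on morphisms as well.

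I expect the only genuinely fiddly step to be bookkeeping around the reflexive structure — making sure the distinguished loop map $l$ behaves correctly under the quotient and that the uniqueness argument really pins down the morphism action rather than just its composition with $\sigma'$. That last point requires observing that, on objects, the canonical presentation of $\cat{smooth}(G)$ comes with a canonical surjection from $G$ whose kernel pair is the parallel-edge relation, which makes the arrow $\sigma'_G$ essentially unique up to a unique isomorphism fixing this presentation.
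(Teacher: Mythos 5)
Your proposal is correct and follows essentially the same route as the paper: define $\cat{smooth}(f)$ on edge-classes via $[e]\mapsto[f(e)]$, justify well-definedness by noting that homomorphisms preserve the parallel relation, take the canonical pointwise quotient as the natural transformation, and derive uniqueness from naturality of any candidate transformation (the paper phrases this as the transformation being forced to be vertex-trivial, you phrase it as the component being forced to be the canonical epi quotient -- both glosses carry the same, equally lightly justified, burden). Your treatment of the distinguished-loop map $l$ is somewhat more explicit than the paper's, but this is a presentational difference rather than a different argument.
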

\begin{proof}
    Let $f\colon G\to H$ be a graph homomorphism. We first claim that there is a unique homomorphism $\cat{smooth}(f)$ from $\cat{smooth}(G)$ to $\cat{smooth}(H)$ that acts identically to $f$ on vertices. Indeed, consider the reflexive closure of the relation of edges being parallel. This relation is preserved by graph homomorphism and so such a homomorphism as $\cat{smooth}(f)$ exists. It is unique since $\cat{smooth}(G)$ and $\cat{smooth}(H)$ have no pairs of parallel edges. Hence it remains to show that any functor extending the map $\cat{smooth}$ is the identity on the vertex-map of homomorphisms.
    
    Let $\eta$ be a natural transformation from $\id_\cat{RGrph}$ to $\cat{smooth}$. Clearly $\eta$ must be vertex-trivial. Hence by naturality we have our required property.
\end{proof}

\body{
    We call the functor defined by the above proposition the \define{smoothing functor}.
}

\begin{lemma}\label{lemma:smoothing}
The smoothing functor does not preserve monic pushouts in $\cat{RGrph}$.
\end{lemma}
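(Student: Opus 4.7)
The plan is to produce an explicit monic pushout square in $\cat{RGrph}$ that $\cat{smooth}$ fails to preserve. The natural candidate is the pushout that glues two copies of an edge along its endpoints: let $\twodiscrete$ be the reflexive graph with two vertices (each with its distinguished loop, no further edges) and let $\edgewithloops$ be the reflexive graph with two vertices, their distinguished loops, and one non-loop edge between them. The obvious inclusion $\twodiscrete\hookrightarrow\edgewithloops$ (identity on vertices and on the two distinguished loops) is a monomorphism.

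First I would check that the pushout of the monic span $\edgewithloops\hookleftarrow\twodiscrete\hookrightarrow\edgewithloops$ is $\reflexiveparallel$, the reflexive graph with two vertices, their distinguished loops, and two parallel non-loop edges between them. This is a direct calculation: in the pushout the two pairs of vertices (and their distinguished loops) are identified via the span, but the two non-loop edges -- which do not lie in the image of $\twodiscrete$ -- remain distinct and parallel. Next I would observe that $\cat{smooth}$ leaves the span itself unchanged: none of $\twodiscrete$ or $\edgewithloops$ contains a pair of parallel edges, so $\cat{smooth}(\twodiscrete)=\twodiscrete$, $\cat{smooth}(\edgewithloops)=\edgewithloops$, and by Proposition~\ref{prop:smooth_is_a_functor} the inclusion maps are preserved identically on vertices and hence uniquely as graph maps. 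Consequently the pushout of the smoothed span is again $\reflexiveparallel$.

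On the other hand, applying $\cat{smooth}$ directly to the pushout object collapses the two parallel non-loop edges, yielding $\cat{smooth}(\reflexiveparallel)=\edgewithloops$. Since $\edgewithloops\not\cong\reflexiveparallel$ (they have different numbers of edges), the canonical comparison map from $\cat{smooth}$ of the pushout to the pushout of the smoothed span cannot be an isomorphism, so $\cat{smooth}$ fails to preserve this monic pushout.

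The only step with any real content is the pushout calculation, but it is routine once one remembers that in $\cat{RGrph}$ the distinguished loops are identified by the pushout together with the vertices they sit on, so the only freedom lies in the non-distinguished edges. There is no genuine obstacle: the example is small enough that each step can be verified by inspection, and it mirrors the strategy of Lemma~\ref{lemma:oh_no} (exhibiting a colimit that the candidate functor fails to transport correctly) adapted to the reflexive setting where parallel non-loop edges -- rather than parallel loops -- provide the witnessing phenomenon.
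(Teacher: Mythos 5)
Your proof is correct and follows essentially the same strategy as the paper, which likewise exhibits a concrete monic pushout (gluing two copies of $\edgewithloops$ over $\twodiscrete$ to produce the parallel pair $\reflexiveparallel$) that $\cat{smooth}$ fails to preserve. Your write-up is in fact more explicit than the paper's, which simply points to a figure; the verification that the span is fixed by $\cat{smooth}$ while the pushout object is not is exactly the needed content.
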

\begin{proof}
    One finds a counterexample by considering the following monic pushout
    \begin{center}
        \includegraphics[width=.6\textwidth]{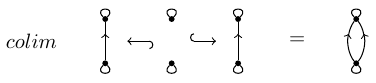}
    \end{center}
    which is not preserved under the action of the smoothing functor.
\end{proof}

\begin{lemma}\label{lemma:all_by_canon}
    Let $\Fa$ be a functor $\cat{RGrph}$ to $\cat{RGrph}$ along with a natural transformation $\alpha:\cat{id}_{\cat{RGrph}}\Rightarrow \Fa$ whose components are epimorphisms. If some component of $\alpha$ is not edge-trivial, then for each reflexive graph $G$, the map $\alpha_G$ identifies every class of parallel loops in $G$.
\end{lemma}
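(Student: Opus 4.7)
The plan is to reduce the problem to a single identification and then propagate it. Let $\twoloop$ denote the one-vertex reflexive graph with distinguished loop $\lambda_1=l(w)$ and one additional non-distinguished loop $\lambda_2$. The key target is to establish $\alpha_{\twoloop}(\lambda_1)=\alpha_{\twoloop}(\lambda_2)$; once this holds, a straightforward application of naturality hands back the full conclusion.

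\emph{Propagation.} Suppose the key identification holds. For any reflexive graph $G$, vertex $v\in G$, and loop $\ell$ at $v$, define a reflexive graph homomorphism $\psi\colon\twoloop\to G$ by $\psi(w)=v$, $\psi(\lambda_1)=l_G(v)$ (forced by the reflexive structure), and $\psi(\lambda_2)=\ell$. Naturality immediately yields $\alpha_G(\ell)=\alpha_G(l_G(v))$, so any two loops at $v$ are mutually identified via the distinguished loop at $v$.

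\emph{Key identification.} By hypothesis there are distinct edges $e_1,e_2\in E(H)$ with $\alpha_H(e_1)=\alpha_H(e_2)$. The approach is to build a reflexive graph homomorphism $\phi\colon H\to\twoloop$ with $\phi(e_1)=\lambda_1$ and $\phi(e_2)=\lambda_2$, whereupon naturality yields the desired $\alpha_{\twoloop}(\lambda_1)=\Fa(\phi)(\alpha_H(e_1))=\Fa(\phi)(\alpha_H(e_2))=\alpha_{\twoloop}(\lambda_2)$. The definition of $\phi$ is largely constrained: every vertex of $H$ must go to $w$, so every distinguished loop of $H$ is forced to $\lambda_1$, while non-distinguished edges may be sent to either loop of $\twoloop$. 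In the main case at least one of $e_1,e_2$ is non-distinguished, say $e_2$; set $\phi(e_2)=\lambda_2$, $\phi(e_1)=\lambda_1$ (either forced or by choice), and send every other non-distinguished edge of $H$ to $\lambda_1$, giving a well-defined reflexive graph homomorphism.

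\emph{Main obstacle.} The exceptional case is when both $e_1,e_2$ are distinguished loops at distinct vertices $u\neq v$ of $H$, so that $\alpha_H(u)=\alpha_H(v)$ and the construction above collapses $\phi(e_1)=\phi(e_2)=\lambda_1$. To eliminate this case I would appeal to the implicit lasso structure: since $\Fa$ preserves pushouts of monic spans it preserves coproducts, so $\Fa(\twodiscrete)=\twodiscrete$ and $\alpha_{\twodiscrete}$ must be an isomorphism, in the spirit of Lemma~\ref{lemma:oh_no}. This forces $u$ and $v$ to lie in the same connected component of $H$; exhibiting an edge-path from $u$ to $v$ and pulling back through a bridging graph such as $\edgewithloops$ should then produce a graph on which $\alpha$ identifies some pair of edges at least one of which is non-distinguished, reducing us back to the main case treated above.
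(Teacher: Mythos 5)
Your overall strategy is the paper's: reduce to showing that $\alpha_{\twoloop}$ merges the two loops of $\twoloop$, then propagate that single identification to arbitrary graphs by naturality. Your propagation step and your main case are correct, and they are exactly what the paper's one-line appeal to ``analogously to Lemma~\ref{lemma:condition_for_double_loop_particle}'' is hiding. You have also correctly located the point where the analogy breaks: a reflexive-graph homomorphism $H\to\twoloop$ must send every distinguished loop to the distinguished loop of $\twoloop$, so when the witnessing pair $e_1,e_2$ consists of two distinguished loops (necessarily at distinct, and then necessarily $\alpha_H$-identified, vertices) no separating homomorphism exists. The paper's own proof does not address this case either.

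However, your proposed resolution of that case is a genuine gap, for two reasons. First, it invokes preservation of monic pushouts (hence of coproducts), which is not among the hypotheses of Lemma~\ref{lemma:all_by_canon}: the lemma assumes only a functor together with an epic natural transformation, not a lasso. Second, and more seriously, even granting the lasso hypotheses the reduction you sketch cannot succeed: the connected-components lasso $(\cat{cc},\pi)$ on $\cat{RGrph}$ is a lasso whose components identify \emph{only} the distinguished loops of vertices lying in a common connected component (an identification forced by the reflexive structure once the vertices are merged) and never identify a non-distinguished edge with anything; yet $\pi_{\twoloop}=\id$ does not merge the parallel loops of $\twoloop$. So no amount of pulling back through a bridging graph such as $\edgewithloops$ will ever produce a witnessing pair containing a non-distinguished edge, and the exceptional case is not merely hard to reduce to the main case --- it is a regime in which the stated conclusion actually fails. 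The lemma is only true if ``not edge-trivial'' is read as ``some component identifies a pair of edges not both of which are distinguished loops''; under that reading your main case already constitutes a complete proof and the exceptional case disappears, so the right fix is to strengthen the hypothesis rather than to repair the reduction.
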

\begin{proof}
    Analogously to Lemma~\ref{lemma:condition_for_double_loop_particle}, one finds that $\La$ must map the graph $\twoloop$, with one vertex and two loops, to the terminal graph. Now consider an arbitrary reflexive graph~$G$. Suppose that $G$ contains a vertex $x$ with two loops $e$ and $e'$. Then there is a morphism $f\colon \twoloop\to G$ mapping the unique vertex of $\twoloop$ to $x$ and sending the two loops of $\twoloop$ to each of $e$ and $e'$. By naturality, we must have that $e$ and $e'$ are identified by $\eta_G$. This completes the proof.
\end{proof}

\begin{proposition}
    Let $\cat{deloop}$ be the map $\cat{RGrph}\to\cat{RGrph}$ that sends each graph $G=(V,E)$ to a graph $\cat{deloop}(G)$ with vertex set $V$ and edge set obtained from~$E$ by identifying loops which have the same endvertex. Then the map $\cat{deloop}$ extends uniquely to a functor $\cat{deloop}\colon\cat{RGrph}\to\cat{RGrph}$ such that there is a natural transformation $\id_{\cat{RGrph}}\Rightarrow\cat{deloop}$.
\end{proposition}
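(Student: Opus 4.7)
The plan is to mimic the proof of Proposition~\ref{prop:smooth_is_a_functor} almost verbatim, replacing the equivalence relation "parallel edges" with the equivalence relation "loops sharing the same endvertex." First I would observe that, for any reflexive graph $G = (V, E)$, the relation on $E$ defined by $e \sim e'$ iff both $e$ and $e'$ are loops incident to the same vertex (together with the diagonal) is an equivalence relation. The underlying set of $\cat{deloop}(G)$ is then $V$ together with $E/\sim$, with source and target inherited from $G$ (which is well-defined precisely because all members of a nontrivial equivalence class are loops at the same vertex), and each distinguished loop $l(v)$ remains in its own equivalence class, so $\cat{deloop}(G)$ is still a reflexive graph.

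Next I would exhibit the functorial action on morphisms. Given any homomorphism $f\colon G \to H$ in $\cat{RGrph}$, observe that graph homomorphisms send loops to loops and preserve incidence; hence if $e \sim e'$ in $G$ then $f(e) \sim f(e')$ in $H$. Thus $f$ descends to a unique function on edge quotients which, together with the identity on vertices, defines a homomorphism $\cat{deloop}(f)\colon \cat{deloop}(G) \to \cat{deloop}(H)$. Uniqueness of this extension follows from the fact that $\cat{deloop}(G)$ and $\cat{deloop}(H)$ have no distinct loops sharing an endvertex, combined with the prescription that the vertex component of $\cat{deloop}(f)$ equals that of $f$. Functoriality ($\cat{deloop}(\id) = \id$ and $\cat{deloop}(g \circ f) = \cat{deloop}(g)\circ \cat{deloop}(f)$) is then immediate by computing vertex and edge components separately. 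The natural transformation $\delta\colon \id_{\cat{RGrph}} \Rightarrow \cat{deloop}$ has as its component $\delta_G\colon G \to \cat{deloop}(G)$ the identity on vertices and the quotient on edges; naturality of $\delta$ reduces to the observation that both $\cat{deloop}(f)\circ \delta_G$ and $\delta_H\circ f$ act as $f$ on vertices and as the composite "apply $f$ then quotient" on edges.

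Finally I would handle uniqueness of the extension to a functor equipped with such a natural transformation. If $\Fa\colon\cat{RGrph}\to\cat{RGrph}$ is any functor extending the object map $\cat{deloop}$ and equipped with a natural transformation $\eta\colon \id_{\cat{RGrph}}\Rightarrow \Fa$, then since $\cat{deloop}(G)$ has the same vertex set as $G$ and $\eta_G$ is a graph homomorphism whose codomain has this shared vertex set, one checks (exactly as in the proof of Proposition~\ref{prop:smooth_is_a_functor}) that the components of $\eta$ must be vertex-trivial. By naturality applied to vertex-indexed homomorphisms $\bullet \to G$, the vertex action of $\Fa(f)$ is forced to agree with that of $f$; and since $\cat{deloop}(H)$ has at most one loop at each vertex, the edge action of $\Fa(f)$ is then determined, so $\Fa = \cat{deloop}$ on morphisms as well. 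There is no real obstacle here: the only step that requires a moment's care is verifying that the edge equivalence relation is respected by homomorphisms, which is nothing more than the observation that a homomorphism sends a loop at $v$ to a loop at $f(v)$.
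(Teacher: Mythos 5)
Your construction and the naturality check follow the paper's intended route exactly: the paper's own proof simply defers to the argument for the smoothing functor, and your proposal is that argument adapted to the relation ``loops sharing an endvertex.'' The object-level construction, the descent of $f$ to the edge quotient, and the naturality of $\delta$ are all fine. One wording issue: the distinguished loop $l(v)$ does \emph{not} remain in a singleton equivalence class --- it is merged with every other loop at $v$; what you actually need (and what is true) is that this merged class serves as the new distinguished loop of $v$ in $\cat{deloop}(G)$, so the quotient is still a reflexive graph.

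The one step whose justification would fail as written is the uniqueness of the edge action. You argue that once the vertex action of $\Fa(f)$ is pinned down, the edge action is determined because $\cat{deloop}(H)$ has at most one loop per vertex. That only determines where \emph{loops} of $\cat{deloop}(G)$ go. Unlike smoothing, delooping leaves parallel non-loop edges intact, so $\cat{deloop}(H)$ may contain several distinct edges from $f(u)$ to $f(v)$, and vertex-determinacy alone does not tell you which one a given non-loop edge of $\cat{deloop}(G)$ is sent to. This is precisely the point where the analogy with Proposition~\ref{prop:smooth_is_a_functor} (whose quotients have \emph{no} parallel edges at all) breaks down. The repair is standard: having established that the components $\eta_G$ must be the canonical quotient maps $\delta_G$ (identity on vertices, surjective on edges), the naturality square $\Fa(f)\circ\delta_G=\delta_H\circ f$ together with the fact that $\delta_G$ is epic forces $\Fa(f)[e]=[f(e)]$ on every edge class, loops and non-loops alike. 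With that substitution your uniqueness argument goes through.
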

\begin{proof}
    By similar arguments to the proof of Proposition~\ref{prop:smooth_is_a_functor}, we obtain our functor. This functor sends each graph homomorphism $f\colon G\to H$ to a homomorphism from $\cat{deloop}(G)$ to $\cat{deloop}(H)$ that acts identically to $f$ on vertex sets, and on edges in the obvious way.
\end{proof}

\body{
    We call the functor defined in the above proposition the \define{delooping functor}.
}

\begin{lemma}\label{lemma:delooping_good}
    The delooping functor preserves monic pushouts in $\cat{RGrph}$.
\end{lemma}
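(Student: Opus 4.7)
The plan is to verify the universal property of the pushout by direct comparison. Given a monic span $A \hookleftarrow C \hookrightarrow B$ with pushout $P$ in $\cat{RGrph}$, the universal property supplies a canonical comparison morphism
\[
\phi \colon \cat{deloop}(A) +_{\cat{deloop}(C)} \cat{deloop}(B) \longrightarrow \cat{deloop}(P),
\]
and the goal is to show that $\phi$ is an isomorphism. Since $\cat{RGrph}$ is a presheaf category on $\cat{SRGr}$, $\phi$ will be an isomorphism iff it is bijective on the vertex and edge components, and moreover colimits in $\cat{RGrph}$ are computed pointwise on each component.

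On vertices, $\cat{deloop}$ acts as the identity by construction, so both sides of $\phi$ have vertex set $V(A) +_{V(C)} V(B)$ and $\phi$ is a bijection there. For the edge component, the key structural observation will be that, because the span is monic, distinct vertices in a single factor remain distinct in $P$, so that every loop in $P$ arises from a loop at the corresponding vertex of one of the factors $A$ or $B$. Using the decomposition $E(\cat{deloop}(G)) \cong E^{nl}(G) \sqcup V(G)$ -- where $E^{nl}(G)$ denotes the non-loop edges and the $V(G)$ summand records the single surviving distinguished loop per vertex -- both sides of $\phi$ will compute on edges to the expression $(E^{nl}(A) +_{E^{nl}(C)} E^{nl}(B)) \sqcup (V(A) +_{V(C)} V(B))$, from which bijectivity on edges follows.

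The main obstacle is the structural claim that monic pushouts preserve the loop/non-loop dichotomy: explicitly, that an edge of $P$ is a loop at some vertex $v$ iff its preimage in $A$ or $B$ is already a loop at the corresponding vertex. This reduces to the standard fact that monomorphisms in a presheaf category are componentwise injective, which guarantees that the source and target of an edge coming from $A$ (respectively $B$) coincide in $P$ iff they already coincide in $A$ (respectively $B$). Once this is granted, the remainder of the proof becomes a direct bookkeeping exercise that splits the edge component of $\phi$ into its non-loop and loop parts; a case analysis according to whether each vertex of $P$ is in the image of $C$ or only a single factor then confirms that the loops at each vertex of $P$ are identified with the distinguished one on both sides of $\phi$.
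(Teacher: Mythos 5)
Your proposal is correct and follows essentially the same route as the paper's proof: both hinge on the facts that the legs of a monic pushout are componentwise injective (so loops of the pushout come only from loops of the factors, and non-loop edges stay non-loops) and that the loops at a vertex in the image of $C$ are identified through the single loop of its (unique) preimage in $C$. The paper packages this as ``a pushout of parallel-loop-free reflexive graphs is parallel-loop-free'' with a two-case analysis on vertices, while you phrase it as a componentwise computation of the comparison map via the splitting $E(\cat{deloop}(G)) \cong E^{nl}(G) \sqcup V(G)$; the content is the same.
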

\begin{proof}
Consider an arbitrary monic span $A\hookleftarrow C\hookrightarrow B$ in $\cat{RGrph}$ and denote by $G$ its pushout. Since $\delta$ is vertex-trivial, it is sufficient to prove that, if none of the graphs $A,B$ or $C$ possess parallel loops, then their pushout $G$ does not. So let $u$ be a vertex in $G$. Suppose first that $u$ is not in the image of $C$. Then $u$ has exactly one loop, since its preimage (which is a singleton since the morphisms are monic) in either $A$ or $B$ also has exactly one loop. Instead suppose that $u$ is in the image of~$C$. Then there is a loop at each preimage of $u$ in both $A$ and $B$. However, both of these loops are the image of a single loop in $C$. In this case, $u$ also has exactly one loop. This completes our proof.
\end{proof}

\body{
    By the above analysis, the delooping functor gives us a lasso on $\cat{RGrph}$ which we denote by $(\cat{deloop},\delta)$.
}

\begin{example}\label{eg:delooping_almost_bad}
Although the delooping functor preserves monic pushouts, it does not preserve all monic colimits. In particular, it doesn't preserve all coequalisers. Indeed, consider the diagram $a,b\colon\terminalgraph\hookrightarrow \edgewithloops$ where $\terminalgraph$ is the terminal graph, $\edgewithloops$ is the reflexive graph on two vertices with a single edge between them and $a$ and $b$ are the two possible homomorphisms $\terminalgraph\hookrightarrow\edgewithloops$. Now observe that the colimit of this diagram is the graph with one vertex and three distinct loops.
\end{example}

\begin{lemma}
    Let $(\La,\eta)$ be a nontrivial lasso on $\cat{RGrph}$ and suppose that it is vertex-trivial. Then $(\La,\eta)$ is the delooping lasso $(\cat{deloop},\delta)$.
\end{lemma}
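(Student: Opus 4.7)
The plan is to pin down the action of each $\eta_G$ and show it coincides with $\delta_G$, then invoke the uniqueness statements in the propositions defining $\cat{smooth}$ and $\cat{deloop}$ to conclude. The intermediate goal is the claim that each $\eta_G$ identifies exactly the parallel loops of $G$ and nothing else.

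First I would observe, using naturality of $\eta$ with respect to the source and target maps in the schema $\cat{SRGr}$ together with the hypothesis that each vertex-component of $\eta_G$ is a bijection, that whenever $\eta_G$ identifies two edges $e, e'$ of $G$ one must have $s(e) = s(e')$ and $t(e) = t(e')$, i.e.\ $e$ and $e'$ are parallel. So the only edge-identifications that $\eta$ ever performs are among parallel edges.

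Next, since $(\La,\eta)$ is nontrivial and vertex-trivial, some component of $\eta$ must fail to be edge-trivial; Lemma~\ref{lemma:all_by_canon} then yields that for every reflexive graph $G$, every class of parallel loops of $G$ is collapsed by $\eta_G$.

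The crucial step is to rule out identification of any pair of parallel non-loop edges. I would argue by contradiction: assume that for some $G$ the map $\eta_G$ identifies a pair of parallel non-loop edges. Then Lemma~\ref{lemma:parallel_edge_particle} forces $\eta$ to identify every pair of parallel edges in every graph. Combined with the first observation, the object-map of $\La$ then coincides with that of $\cat{smooth}$, and the uniqueness clause in Proposition~\ref{prop:smooth_is_a_functor} identifies $\La$ with $\cat{smooth}$ as functors (and $\eta$ with its canonical natural transformation). But then Lemma~\ref{lemma:smoothing} exhibits a monic pushout not preserved by $\La$, contradicting axiom~\ref{cond:lasso-1} of the lasso definition. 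Hence the only edges identified by $\eta_G$ are the parallel-loop classes; this is precisely the object-level action of $\cat{deloop}$, and the analogous uniqueness clause for $\cat{deloop}$ forces $\La = \cat{deloop}$ and $\eta = \delta$.

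The main obstacle is the third step, since I need to be careful that the combination of ``identifies all parallel loops'' (from Lemma~\ref{lemma:all_by_canon}) with ``identifies all parallel non-loops'' (from the assumption for contradiction together with Lemma~\ref{lemma:parallel_edge_particle}) really does assemble into the object-map of $\cat{smooth}$ globally, so that the uniqueness clause applies and Lemma~\ref{lemma:smoothing} can deliver the final contradiction.
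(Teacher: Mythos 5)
Your proposal is correct and follows essentially the same route as the paper: use vertex-triviality plus naturality to see that only parallel edges can be identified, rule out identification of parallel non-loop edges via Lemma~\ref{lemma:parallel_edge_particle} and Lemma~\ref{lemma:smoothing} (since that would force $\La$ to be the smoothing functor, which does not preserve monic pushouts), and then conclude via Lemma~\ref{lemma:all_by_canon} that exactly the parallel-loop classes are collapsed, so $(\La,\eta)$ is $(\cat{deloop},\delta)$. Your version is slightly more explicit than the paper's about why identified edges must be parallel and about the uniqueness clauses, but the decomposition into key lemmas is identical.
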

\begin{proof}
    Since $(\La,\eta)$ is nontrivial, but vertex-trivial, there is a graph $G$ so that $\eta_G$ identifies some pair of parallel edges in $G$. If such a pair of edges are not loops, then by Lemma~\ref{lemma:parallel_edge_particle} the functor $\La$ has to be the smoothing functor. However, by Lemma~\ref{lemma:smoothing}, this can't be the case. Hence, no pair of non-loop parallel edges may be identified. So the identified edges must be a pair of parallel loops and by Lemma~\ref{lemma:all_by_canon}, we conclude that $\La$ is the delooping functor.
\end{proof}

\body{
One may prove an analogue of Lemma~\ref{lemma:basically_cc} for $\cat{RGrph}$ (q.v.\ (\ref{cor:like_cc})), which states that any non-vertex-trivial lasso on $\cat{RGrph}$ must act like the connected components lasso on vertices. We formalise this in the following definition. It then remains to deduce the possibilities for how such a lasso can act on edges.
}

\begin{definition}
    We say that a lasso $(\La,\eta)$ on $\cat{RGrph}$ is \define{component-collapsing} if, for all graphs $G$ in $\cat{RGrph}$, the graph $\La(G)$ has the same vertex set as the graph~$\cat{cc}(G)$ and the vertex map of the homomorphism $\eta_G\colon G\twoheadrightarrow\La(G)$ is identical to the vertex map of the homomorphism $\pi_G\colon G\twoheadrightarrow \cat{cc}(G)$. That is, the lasso acts like the connected components lasso $(\cat{cc},\pi)$ on the vertex sets of graphs. Observe that~$(\cat{cc},\pi)$ is initial in the subcategory of component-collapsing lassos.
\end{definition}

\begin{corollary}\label{cor:like_cc}
    Let $(\La,\eta)$ be a lasso on $\cat{RGrph}$. If $(\La,\eta)$ is not vertex-trivial, then it is component-collapsing.
\end{corollary}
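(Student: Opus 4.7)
The plan is to adapt the proof strategy of Lemma~\ref{lemma:basically_cc} (which handled $\cat{Grph}$) to $\cat{RGrph}$, accounting for the fact that every vertex now carries a distinguished loop. Fix a non-vertex-trivial lasso $(\La, \eta)$ on $\cat{RGrph}$; by assumption there exist a graph $G$ and vertices $x \neq y$ of $G$ with $\eta_G(x) = \eta_G(y)$. The goal is to show that for every $H \in \cat{RGrph}$, $\eta_H$ identifies two vertices of $H$ precisely when they lie in the same connected component. As preparation, I would first observe that $\La(\terminalgraph) = \terminalgraph$ (since $\eta_{\terminalgraph}$ is epic and $\terminalgraph$ has one vertex and one edge) and consequently $\La(\twodiscrete) = \twodiscrete$ (since $\twodiscrete = \terminalgraph + \terminalgraph$ is a monic pushout over the empty reflexive graph, preserved by $\La$ under Condition~\ref{cond:lasso-1}).

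For the first half of the claim, I would show that no pair of vertices lying in distinct components of any $H$ can be identified by $\eta_H$. Arguing by contradiction, if $u, v$ lay in different components of some $H$ with $\eta_H(u) = \eta_H(v)$, then one can exhibit a homomorphism $h \colon H \to \twodiscrete$ sending $u, v$ to the two distinct vertices of $\twodiscrete$. Naturality of $\eta$ would force $\eta_{\twodiscrete}$ to identify these two vertices, contradicting $\La(\twodiscrete) = \twodiscrete$ combined with the bijectivity of $\eta_{\twodiscrete}$ on vertices (forced by it being an epic map between two two-vertex graphs).

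For the second half, I plan to show that the endvertices of every edge of every reflexive graph are identified by $\eta$. Starting from the distinguished $G$ (whose $x, y$ now necessarily lie in the same component), construct a homomorphism $f \colon G \to \fulltwo$ sending $x, y$ to distinct vertices — feasible because $\fulltwo$ carries both directed edges and a distinguished loop at each vertex, so any vertex assignment extends to edges. Naturality then collapses the two vertices of $\fulltwo$ inside $\La(\fulltwo)$. Next, I would realize $\fulltwo$ as the monic pushout $\edgewithloops +_{\twodiscrete} \edgewithloops$, where two copies of $\edgewithloops$ are glued along their endpoints in opposite orders. Since $\La$ preserves this pushout and $\La(\twodiscrete) = \twodiscrete$, were $\La(\edgewithloops)$ to retain two vertices so would the pushout, contradicting the collapse of $\La(\fulltwo)$. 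Hence $\La(\edgewithloops)$ has a single vertex; then for any $H$ and any edge $e \in H$, the homomorphism $\edgewithloops \to H$ sending the non-loop edge onto $e$, together with naturality, yields that $\eta_H$ identifies the endvertices of $e$. Transitivity along paths of edges then identifies every pair of vertices in a common component of $H$, matching $\pi_H$ on vertices.

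The main bookkeeping hurdle will be verifying that the pushout decompositions employed — in particular $\twodiscrete = \terminalgraph + \terminalgraph$ and $\fulltwo = \edgewithloops +_{\twodiscrete} \edgewithloops$ — have genuinely monic legs in $\cat{RGrph}$ and that the reflexivity data line up correctly across them; this should go through cleanly because each target graph carries exactly one distinguished loop per vertex, so there is no ambiguity in the edge maps, but it is the kind of structural detail that is easy to mishandle when passing from $\cat{Grph}$ to $\cat{RGrph}$.
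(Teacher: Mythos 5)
Your proposal is correct and takes essentially the same route as the paper: the paper's proof of Corollary~\ref{cor:like_cc} is literally ``analogous to the proof of Lemma~\ref{lemma:basically_cc}'', and your argument is a faithful, correctly detailed carrying-out of that analogy (replacing $\justanedge$ by $\edgewithloops$, re-running the $\twodiscrete$ coproduct argument from Lemma~\ref{lemma:contraction_terminal} inside $\cat{RGrph}$, and decomposing $\fulltwo$ as a monic pushout of two copies of $\edgewithloops$ over $\twodiscrete$).
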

\begin{proof}
    The proof is analogous to the proof of Lemma~\ref{lemma:basically_cc}.
\end{proof}

\begin{proposition}\label{prop:source}
    There is a component-collapsing lasso $(\cat{source},\sigma)$ so that for each graph $G$, the map $\sigma_G$ identifies exactly the edges having $u$ as a source, for each vertex $u$ in $G$.
\end{proposition}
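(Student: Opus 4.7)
The plan is to construct $\cat{source}$ explicitly on objects, show it extends uniquely to a functor equipped with a natural transformation $\sigma \colon \id_{\cat{RGrph}} \Rightarrow \cat{source}$ with epic components, and finally verify preservation of monic pushouts.

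\textbf{Construction.} For each reflexive graph $G = (V,E,s,t,l)$, let $\approx$ be the smallest equivalence relation on $E$ containing both $e \approx e'$ whenever $s(e) = s(e')$, and $l(u) \approx l(v)$ whenever $u$ and $v$ lie in the same connected component of $G$. Then $\cat{source}(G)$ is the reflexive graph with vertex set $V/{\sim}$, edge set $E/{\approx}$, source and target maps descending from $s$ and $t$ (well-defined because the target of any edge with source $u$ lies in the component of $u$), and distinguished loop at each component vertex $[c]$ given by the class $[l(u)]$ for any $u \in c$. The natural transformation component $\sigma_G \colon G \twoheadrightarrow \cat{source}(G)$ is the pointwise quotient map, which is epic by construction.

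\textbf{Functoriality and naturality.} Any graph homomorphism $f \colon G \to H$ preserves both the source map and the connectivity relation, so it descends to a unique homomorphism $\cat{source}(f) \colon \cat{source}(G) \to \cat{source}(H)$; uniqueness follows from epicity of $\sigma_G$, as in the proof of Proposition~\ref{prop:smooth_is_a_functor}. Naturality of $\sigma$ is then immediate. By construction the lasso is component-collapsing (the vertex action of $\sigma_G$ coincides with $\pi_G$), and the required edge-identification property is built into the first clause of $\approx$.

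\textbf{Preservation of monic pushouts.} This is the main technical step and the expected obstacle. Mimicking the strategy of Lemma~\ref{lemma:delooping_good}, I would take a monic pushout $G = A +_C B$ and exhibit an isomorphism $\cat{source}(G) \cong \cat{source}(A) +_{\cat{source}(C)} \cat{source}(B)$. The vertex level is handled by Proposition~\ref{prop:cc_coco}, which gives preservation of pushouts by $\cat{cc}$. At the edge level, one exploits monicity of the span: each edge of $G$ has a unique preimage in $A$, in $B$, or (if shared) in $C$, and its source in $G$ is the image of its source in that preimage. Hence the equivalence $\approx$ on $E(G)$ is generated exactly by the corresponding equivalences on $E(A)$ and $E(B)$ glued along $E(C)$; no ``new'' source-identifications are created in the pushout. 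The auxiliary clause identifying distinguished loops within a component reduces to the vertex-level argument via preservation of connected components. Combining these observations yields the required isomorphism.
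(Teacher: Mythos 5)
Your proposal is correct and takes essentially the same route as the paper: both define $\cat{source}(G)$ by collapsing vertices to components (as in $\cat{cc}$) and quotienting edges by the shared-source relation, obtain functoriality and naturality from preservation of sources and connectivity under homomorphisms, and then check preservation of monic pushouts by comparing the generated edge-equivalence on $A +_C B$ with the pushout of the quotiented span. The one step that both you and the paper leave implicit, and which is the only genuinely delicate point, is the case of an edge $a$ of $A$ and an edge $b$ of $B$ whose sources become identified only after gluing along $C$: these are not related by $\approx_A$ or $\approx_B$ individually, so the claim that ``no new source-identifications are created'' needs the observation that, by monicity, their sources are both images of a single vertex $w$ of $C$, whence $a$ and $b$ are each identified (in $\cat{source}(A)$, resp.\ $\cat{source}(B)$) with the image of the distinguished loop $l(w)$, and the span then glues these; this use of reflexivity is exactly what makes the argument work in $\cat{RGrph}$ and would fail without the distinguished loops.
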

\begin{proof}
    For each reflexive graph $G$, let $\cat{source}(G)$ be the graph obtained from $\cat{cc}(G)$ by identifying edges sharing a source in $G$. It is easy to see that this assembles into a functor and that the maps $\sigma_G$ form a natural transformation $\sigma\colon\id_\cat{RGrph}\Rightarrow\cat{source}$.
    Now consider an arbitrary monic span $A\hookleftarrow C\hookrightarrow B$ in $\cat{RGrph}$ and denote by $G$ its pushout. We are required to show that quotienting edges by the relation of sharing a source commutes with monic pushouts, which in turn consist of a disjoint union and then a quotient by the relation on edges given by the span. Let $a$ and $b$ be edges in the disjoint union of $A$ and $B$. Our required outcome is clear when $a$ and $b$ both lie in one of the graphs so we assume, without loss of generality, that $a$ is an edge of $A$ and $b$ is an edge of $B$.
    Now observe that if there is some $c$ in $C$ so that $a$ and $b$ are the images of $c$ in $A$ and $B$, respectively, then the naturality of $\gamma$ implies that the edges $\gamma_A(a)$ and $\gamma_B(b)$ are related in the span $\cat{source}(A)\hookleftarrow \cat{source}(C)\hookrightarrow\cat{source}(B)$ by $\gamma_C(c)$. Now assume that $a$ and $b$ are not related by the span. Then they do not share a source in~$G$ and do not share a source in $\cat{source}(A)+\cat{source}(B)$. This completes our proof.
\end{proof}

\body{
    We call the lasso $(\cat{source},\sigma)$ defined in the above the \define{source lasso}.
}

\begin{proposition}\label{prop:target}
    There is a component-collapsing lasso $(\cat{target},\tau)$ so that for each graph $G$, the map $\tau_G$ identifies exactly the edges having $u$ as a target, for each vertex $u$ in $G$.
\end{proposition}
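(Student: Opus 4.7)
The plan is to mirror the proof of Proposition~\ref{prop:source} with \emph{source} replaced by \emph{target} throughout, since reversing the roles of source and target is an involution on the schema \(\cat{SRGr}\) that fixes the vertex object. First I would define, for each reflexive graph $G$, the graph $\cat{target}(G)$ obtained from $\cat{cc}(G)$ by quotienting its edge set by the relation identifying two edges whenever they share a target vertex in $G$. Then I would verify that a homomorphism $f \colon G \to H$ preserves the ``sharing a target'' relation (because $f$ commutes with the target map $t$) and so descends to a unique homomorphism $\cat{target}(f) \colon \cat{target}(G) \to \cat{target}(H)$ acting as $\pi_H \circ f$ on vertices; functoriality is then immediate. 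The components $\tau_G \colon G \twoheadrightarrow \cat{target}(G)$ are epic by construction and naturality of $\tau$ follows directly from the naturality of $\pi$ together with the fact that $f$ preserves targets.

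Next I would establish condition~\ref{cond:lasso-1}, i.e.\ preservation of monic pushouts. Given a monic span $A \hookleftarrow C \hookrightarrow B$ with pushout $G$, one needs to show that applying $\cat{target}$ pointwise yields a pushout with apex $\cat{target}(G)$. Since $\cat{cc}$ already preserves pushouts (Proposition~\ref{prop:cc_coco}) and the vertex action of $\cat{target}$ equals that of $\cat{cc}$, the vertex part is already handled; the work is entirely on edges. Following the pattern of Proposition~\ref{prop:source}, I would reduce to showing that the target-quotient commutes with the edge-level pushout: i.e.\ for edges $a \in A$ and $b \in B$, the images $\tau_A(a)$ and $\tau_B(b)$ are identified in $\cat{target}(G)$ if and only if they are identified in the pushout of the span $\cat{target}(A) \hookleftarrow \cat{target}(C) \hookrightarrow \cat{target}(B)$. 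If $a$ and $b$ are related through some $c \in C$, naturality of $\tau$ supplies a witness; if they are unrelated through $C$, they cannot share a target in $G$ (since distinct preimages of vertices remain distinct under monic maps) and so remain distinct in $\cat{target}(A) + \cat{target}(B)$.

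I expect the main obstacle to be exactly the same subtle case-analysis on edges that appeared in Proposition~\ref{prop:source}: ensuring that two edges, one from $A$ and one from $B$, end up identified in $\cat{target}(G)$ precisely when forced by the span together with the target-sharing relation, with no spurious identifications arising from the pushout. The argument goes through cleanly because the span is monic, so preimages of vertices in $C$ land injectively in both $A$ and $B$; no edge in $A \setminus \image C$ can acquire a new shared target with an edge in $B \setminus \image C$ merely by gluing along $C$. Once this is checked, the lasso conditions are satisfied, and the description that $\tau_G$ identifies exactly edges sharing a target at each vertex $u$ is automatic from the construction, completing the proof.
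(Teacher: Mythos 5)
Your proposal is correct and matches the paper exactly: the paper's proof of Proposition~\ref{prop:target} is literally the one-line remark that the argument is ``exactly analogous to the proof of Proposition~\ref{prop:source},'' and your source-to-target translation (same definition via $\cat{cc}$, same naturality check, same edge-level case analysis for preservation of monic pushouts) is precisely that analogy, spelled out in more detail than the paper bothers to.
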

\begin{proof}
    The proof is exactly analogous to the proof of Proposition~\ref{prop:source}.
\end{proof}
\body{
    We call the lasso $(\cat{target},\tau)$ defined in the above the \define{target lasso}.
}

\begin{proposition}\label{prop:gather}
    There is a component-collapsing lasso $(\cat{gather},\gamma)$ so that for each graph $G$, the map $\gamma_G$ identifies exactly the edges of $G$ which are loops in $c$, for each connected component $c$ of $G$.
\end{proposition}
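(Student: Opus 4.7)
The plan is to construct $(\cat{gather}, \gamma)$ as a natural refinement of the connected components lasso, closely mimicking the construction used for the source and target lassos in Propositions~\ref{prop:source} and~\ref{prop:target}. For each reflexive graph $G$, I would define $\cat{gather}(G)$ as the graph obtained from $\cat{cc}(G)$ by further identifying any two edges of $G$ which are both loops lying in the same connected component. Because a graph homomorphism sends loops to loops and preserves the connectivity relation, the identification is respected, so $\cat{gather}$ extends uniquely to a functor $\cat{RGrph} \to \cat{RGrph}$. The canonical quotient maps $\gamma_G \colon G \twoheadrightarrow \cat{gather}(G)$ are surjective on both vertices and edges, hence epic, and assemble into a natural transformation $\gamma \colon \id_{\cat{RGrph}} \Rightarrow \cat{gather}$. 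This gives Condition~\ref{cond:lasso-2} immediately; the component-collapsing property also follows by construction, since the vertex-map of $\gamma$ is identical to that of $\pi$.

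The bulk of the work is verifying Condition~\ref{cond:lasso-1}, i.e. that $\cat{gather}$ preserves monic pushouts. Following the template of Proposition~\ref{prop:source}, I would consider an arbitrary monic span $A \hookleftarrow C \hookrightarrow B$ in $\cat{RGrph}$ with pushout $G$. Since $\gamma$ and $\pi$ agree on vertex maps, and since $(\cat{cc},\pi)$ is known to preserve monic pushouts, the vertex portions of $\cat{gather}(G)$ and $\cat{gather}(A) +_{\cat{gather}(C)} \cat{gather}(B)$ agree. For the edge portions, I would show that two loops $a,b$ in the disjoint union $A+B$ are identified in $\cat{gather}(G)$ if and only if they are identified in $\cat{gather}(A) +_{\cat{gather}(C)} \cat{gather}(B)$. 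If $a$ and $b$ both lie in a single graph $A$ (or $B$), the claim reduces to the preservation properties internal to one factor. If $a \in A$ and $b \in B$ are related directly through a loop of $C$, naturality of $\gamma$ forces their identification on both sides.

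The main obstacle I expect is the remaining subcase: two loops $a \in A$, $b \in B$ which are not related by a single loop of $C$ but which happen to lie in the same connected component of $G$ via a longer path assembled through the gluing. To handle this, I would lift the relevant chain of edges witnessing connectivity in $G$ to a chain of identifications in the disjoint union $\cat{gather}(A) + \cat{gather}(B)$ modulo the span. The key input is that the connected-component equivalence on vertices of $G$ is generated by the component-equivalences of $A$ and $B$ together with the span-induced relation through $C$ — precisely the fact already underlying $(\cat{cc}, \pi)$'s preservation of monic pushouts. Propagating this step-by-step along the chain, and using that every loop of $G$ in a given component is sent by $\gamma_G$ to the unique gathered loop at that component, produces the desired identification on the pushout side. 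Conversely, if $a$ and $b$ are not in a common component of $G$, monicity of the span prevents any spurious identification in the pushout of the $\cat{gather}$-images, completing the argument in the style of Proposition~\ref{prop:source}.
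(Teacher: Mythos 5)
Your proposal is correct and follows essentially the same route as the paper: define $\cat{gather}$ as a further quotient of $\cat{cc}$ identifying the loops within each component, check naturality, and verify preservation of monic pushouts by comparing edge-identifications on the two sides. The paper dispatches the pushout-preservation step in a single line, whereas you spell out the chain-lifting argument; the only detail worth making explicit there is that the mediating edges of $\cat{gather}(C)$ needed at each crossing of your chain exist because $C$ is reflexive, so every shared vertex carries a distinguished loop.
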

\begin{proof}
    For each reflexive graph $G$, let $\cat{gather}(G)$ be the graph obtained from $\cat{cc}(G)$ by identifying the loops of $G$ in $c$, for each connected component $c$ of $G$. It is easy to see that we can extend $\cat{gather}$ to a functor $\cat{gather}\colon\cat{RGrph}\to\cat{RGrph}$ and that the maps $\gamma_G$ form a natural transformation $\gamma\colon\id_\cat{RGrph}\to\cat{gather}$.

    Now consider an arbitrary monic span $A\hookleftarrow C\hookrightarrow B$ in $\cat{RGrph}$ and denote by $G$ its pushout. We are required to show that quotienting edges by the relation of being loops in the same connected component commutes with monic pushouts. This is clear, since $\gamma_G$ preserves connected components for each $G$ and also preserves the property of being a loop.
\end{proof}
\body{
    We call the lasso $(\cat{gather},\gamma)$ defined in the above the \define{gathering lasso}. Along with the source and target lassos, we shall see that we have finally exhausted the space of lassos on $\cat{RGrph}$.
}

\begin{lemma}\label{lem:not_just_para}
    Let $(\La,\eta)$ be a component-collapsing lasso. Suppose that some component of $\eta$ identifies a pair of parallel non-loop edges. Then some component of $\eta$ must identify a pair of non-parallel edges.
\end{lemma}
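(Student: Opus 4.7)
The plan is to exhibit a pair of non-parallel edges identified by $\eta_G$ directly from the reflexive structure, using the hypothesis only insofar as it guarantees two distinct vertices of $G$ lying in a common connected component. Writing the identified parallel non-loop edges as $e, e'$, they share a common source $u$ and a common target $v$, and $u \ne v$ because they are non-loops. Moreover $u$ and $v$ lie in the same connected component of $G$, since $e$ itself is an edge between them.

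Now I will leverage the component-collapsing hypothesis, which forces $\eta_G(u) = \eta_G(v) =: w$. Because $\eta_G$ is a morphism in $\cat{RGrph}$, it must commute with the distinguished-loop assignment $l\colon V \to E$ that defines the schema $\cat{SRGr}$, so $\eta_G(\ell_u) = \ell_w = \eta_G(\ell_v)$, where $\ell_u, \ell_v, \ell_w$ denote the distinguished loops at $u, v, w$. The edges $\ell_u$ and $\ell_v$ are distinct because $u \ne v$ (and the schema equations $s \circ l = \id_V = t \circ l$ force distinct vertices to have distinct distinguished loops), and they are non-parallel because their sources and targets differ. This produces the required pair of non-parallel edges identified by $\eta_G$.

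There is no substantive obstacle here; the argument is an immediate consequence of the reflexive-graph axioms combined with component-collapsing. It is worth noting that this actually proves something slightly stronger than the stated lemma: any component-collapsing lasso on $\cat{RGrph}$ identifies some pair of non-parallel edges, since the entire argument can be rerun in any connected reflexive graph on at least two vertices (e.g.\ $\edgewithloops$), independently of whether parallel non-loop edges are identified. The stated hypothesis merely provides a convenient witness of such a pair of vertices inside whatever graph $G$ one is handed.
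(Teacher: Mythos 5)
Your argument is formally valid under the paper's chosen presentation of $\cat{RGrph}$ as $\mathbf{Set}^{\cat{SRGr}}$: naturality of the component $\eta_G$ at the schema morphism $l$ forces $\eta_G(l(u))=l(\eta_G(u))$, so a component-collapsing lasso must identify the distinguished loops $\ell_u,\ell_v$ of any two distinct vertices lying in a common component, and these are distinct, non-parallel elements of $G(E)$. This is an entirely different route from the paper's, which never touches the degeneracies: the paper first upgrades the hypothesis (as in Lemma~\ref{lemma:parallel_edge_particle}) to the statement that $\eta$ collapses every parallel class in every graph, then assumes for contradiction that only parallel edges are ever identified and exhibits a monic-pushout decomposition of $\suprisereverseedge$ into parallel-free graphs whose image under $\La$ cannot have the correct colimit.

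That said, the fact that your proof never uses the hypothesis should make you uneasy rather than pleased. As you note, it shows the conclusion holds for \emph{every} component-collapsing lasso --- including $(\cat{cc},\pi)$ itself --- so, read literally, the lemma is vacuous, and the witness pair you produce is precisely the identification of degeneracy loops that naturality of $l$ already forces on any component-collapsing lasso. That witness is useless for the job the lemma performs in the classification argument that follows: there the non-parallel identified pair is fed into Lemma~\ref{lem:hom_tool} and the subsequent case analysis, and an embedding of $\edgewithloops$ into a graph $H$ can only hit distinguished loops, so knowing that $\ell_u$ and $\ell_v$ are identified propagates no information about non-degenerate edges. The lemma only does work if ``identifies a pair of non-parallel edges'' is read as an identification beyond those forced by the vertex map on degeneracies; that stronger claim is what the paper's contradiction argument is driving at, and your argument does not establish it. Either prove the statement the downstream argument actually needs (via the monic-pushout route), or flag explicitly that the statement as written is degenerate in $\mathbf{Set}^{\cat{SRGr}}$.
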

\begin{proof}
    Let $G$ be a graph so that $\eta_G$ identifies a pair $e,e'$ of parallel edges in $G$. Then analogously to in the proof of Lemma~\ref{lemma:parallel_edge_particle}, we get that all parallel classes of edges in all graphs are identified to single edges by $\eta$. Suppose for a contradiction that no component of $\eta$ identifies edges which are not in parallel. Consider the reflexive graph \suprisereverseedge consisting of a pair of vertices, spanning which is a pair of parallel edges and a single other non-loop edge in the opposing direction. Then we may express $\suprisereverseedge$ as a monic pushout over graphs having no parallel edges as in the following illustration.
    \begin{center}
        \includegraphics[width=.8\textwidth]{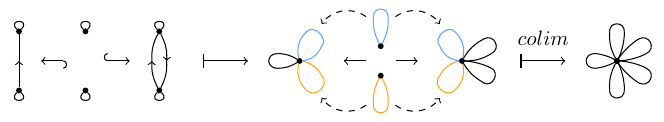}
    \end{center}
    However we observe that the colimit of the image of this diagram under $\La$ does not have the original parallel edges identified. Hence $\La$ does not preserve monic pushouts, a contradiction.
\end{proof}

\begin{lemma}\label{lem:hom_tool}
    Let $G$ be a reflexive graph and $e$ and $e'$ distinct edges in $G$ which are not parallel but lie in the same connected component. Let~$\fulltwo$ denote the reflexive graph having two vertices and an edge in each direction between them. Let $\edgewithloops$ denote the reflexive graph having two vertices and just one edge between them. Then there is an epimorphism $f$ from $G$ to either $\fulltwo$ or $\edgewithloops$, so that one of the following holds:
    \begin{enumerate}
        \item[(1)] the edges $f(e)$ and $f(e')$ are distinct loops; or
        \item[(2)] the edges $f(e)$ and $f(e')$ are a loop and a non-loop, respectively, or vice versa.
    \end{enumerate}
\end{lemma}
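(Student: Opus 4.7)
The plan is to argue by case analysis on how the endpoint multisets $\{s_e, t_e\}$ and $\{s_{e'}, t_{e'}\}$ overlap. Writing $s_e, t_e$ for the source and target of $e$ (similarly for $e'$), the hypothesis that $e\neq e'$ and the two are not parallel means $(s_e, t_e)\neq (s_{e'}, t_{e'})$ as ordered pairs, so the overlap pattern falls into three subcases: disjoint endpoint sets, exactly one shared endpoint, or both endpoints shared but in swapped order. In each subcase I would exhibit a concrete partition of $V(G)$ into two classes and collapse each class to a single vertex of the target.

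In the disjoint case, I would set $A\supseteq\{s_e, t_e\}$ and $B\supseteq\{s_{e'}, t_{e'}\}$, distributing the remaining vertices so as to realize non-loop cross-edges when $G$ has any. Sending $A\mapsto a$ and $B\mapsto b$ defines a homomorphism to $\edgewithloops$ or $\fulltwo$ (chosen according to whether $G$ has cross-edges in one or both directions); it sends $e$ to the loop at $a$ and $e'$ to the loop at $b$, delivering conclusion~(1). When $e$ and $e'$ share exactly one endpoint $u$, I would aim for conclusion~(2). If one of them, say $e$, is the reflexive loop at $u$, then $e'$ has a second endpoint $v\neq u$, and the assignment $u\mapsto a$, $v\mapsto b$ makes $f(e)$ the loop at $a$ and $f(e')$ a non-loop edge of the target. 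If neither is a loop, then the remaining endpoints $t_e, t_{e'}$ are distinct and both differ from $u$, so the assignment $\{u, t_e\}\mapsto a$ and $t_{e'}\mapsto b$ collapses $e$ to the loop at $a$ while keeping $e'$ as a non-loop, again yielding~(2).

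The main obstacle is ensuring the constructed maps are genuine epimorphisms in $\cat{RGrph}$: since epis in this presheaf category coincide with componentwise surjections, one must exhibit a preimage for every edge of the chosen target. The two target loops are always witnessed by $e$ and $e'$ themselves (or, where needed, by the reflexive loops on vertices assigned to both parts), so the delicate point is producing preimages for the non-loop edges of $\fulltwo$ or $\edgewithloops$. This is precisely what dictates the choice between the two targets and forces the careful distribution of the remaining vertices of $G$ across $A$ and $B$. The final subcase, in which $e$ and $e'$ share both endpoints in reversed order, is the hardest: the naive mapping $u\mapsto a$, $v\mapsto b$ sends $e$ and $e'$ to two non-loop edges of $\fulltwo$, which matches neither (1) nor (2). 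I expect to resolve it by collapsing $u$ and $v$ to a single vertex so that both $e, e'$ become the unique loop there, and then exploit the fact that the induced map onto $\edgewithloops$ can be arranged to send a third auxiliary edge (guaranteed to exist in the contexts in which the lemma is actually invoked) onto the non-loop edge, giving conclusion~(2).
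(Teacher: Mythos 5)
Your overall strategy---partition the vertex set of $G$ into two classes, collapse each class to a vertex of $\fulltwo$ or $\edgewithloops$, and choose the partition according to the configuration of $e$ and $e'$---is the same idea as the paper's proof, which organizes the cases by the loop/non-loop status of $e$ and $e'$ and realizes the two-class partition as the components of $T-d$ for a spanning tree $T$ and a well-chosen edge $d$. Your first two cases (disjoint endpoint sets; exactly one shared endpoint) go through essentially as in the paper, modulo an implicit assumption both arguments share: $e$ and $e'$ must lie in a common connected component, since otherwise no edge of $G$ can witness the non-loop edge of the target and no epimorphism onto $\fulltwo$ or $\edgewithloops$ exists at all.

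The genuine gap is your anti-parallel case, and the resolution you sketch cannot work. If you collapse $u$ and $v$ to a single vertex then $f(e)=f(e')$ is a single loop, and no auxiliary edge can rescue this: conclusion (1) requires $f(e)$ and $f(e')$ to be \emph{distinct} loops and conclusion (2) requires exactly one of them to be a non-loop, so a map with $f(e)=f(e')$ satisfies neither, no matter where anything else is sent. Appealing to ``the contexts in which the lemma is actually invoked'' is likewise inadmissible, since the statement quantifies over all $G$ and all non-parallel pairs. In fact the obstruction is not an artifact of your construction: take $G=\fulltwo$ with $e,e'$ its two anti-parallel non-loop edges. Any epimorphism in $\cat{RGrph}$ from $G$ onto a two-vertex target is vertex-bijective, hence sends both $e$ and $e'$ to non-loops; and no vertex-bijective homomorphism $\fulltwo\to\edgewithloops$ exists because $\edgewithloops$ lacks an edge in one of the two directions. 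So neither (1) nor (2) is attainable for this instance. You have therefore correctly isolated the one configuration that the two-class collapse cannot handle---the paper's own proof quietly has the same problem, since in its ``both non-loops'' case contracting the components of $T-e$ leaves an anti-parallel $e'$ as a second non-loop---but what is needed there is an additional hypothesis excluding anti-parallel pairs or a genuinely separate argument, not the single-vertex collapse you propose.
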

\begin{proof}
    First suppose that $e$ and $e'$ are both loops and denote by $u$ and $u'$ the vertices which they are attached to. Let $T$ be a spanning tree in $G$ and let $d$ be an edge that separates $u$ and $u'$ in $T$. Now identify all of the vertices in each of the two components of $T-d$. The resulting graph has two vertices and from this we can easily obtain our function~$f$ satisfying (1).
    In the case where one of $e$ is a loop and the other is a non-loop, we perform the same operation where $d$ is the non-loop edge and $T$ is an arbitrary spanning tree containing $d$. This gives us outcome (2). Finally, in the case where $e$ and $e'$ are both non-loops, we choose one of these edges arbitrarily to be $d$ and the spanning tree $T$ so that it contains this chosen edge. In this case, we also have outcome (2).
\end{proof}

\begin{lemma}
    Let $(\La,\eta)$ be a component-collapsing lasso. If $(\La,\eta)$ is not edge-trivial, then either it is:
    \begin{enumerate}
        \item the lasso $(\cat{cc},\pi)\circ(\cat{deloop},\delta)$;
        \item one of the lassos $(\cat{source},\sigma)$, $(\cat{target},\tau)$, $(\cat{gather},\gamma)$; or
        \item the lasso $(\cat{deloop},\delta)\circ(\cat{cc},\pi)$, which is the terminal lasso on $\cat{RGrph}$.
    \end{enumerate}
\end{lemma}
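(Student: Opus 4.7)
Let $(\La,\eta)$ be a component-collapsing lasso on $\cat{RGrph}$ that is not edge-trivial. I classify it by the partition $P$ induced by $\eta_{\edgewithloops}$ on the three edges $\{e_u, e_v, e_{uv}\}$ of $\edgewithloops$ (vertices $u,v$, designated loops $e_u,e_v$, non-loop edge $e_{uv}\colon u\to v$). Since $\edgewithloops$ is connected and the lasso is component-collapsing, $\La(\edgewithloops)$ is a single vertex whose loops are the classes of $P$. The five possible partitions will correspond to the five component-collapsing lassos in the statement: the discrete partition gives option (1), the three intermediate partitions (two-block partitions) give $(\cat{gather},\gamma)$, $(\cat{source},\sigma)$ and $(\cat{target},\tau)$ respectively, and the indiscrete partition gives option (3).

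\textit{Discrete $P$.} I first show $\eta$ cannot identify any two non-parallel edges: given such a pair $(e,e')$ identified in some graph $G$, Lemma~\ref{lem:hom_tool} provides a homomorphism $f\colon G \to \edgewithloops$ or $\fulltwo$ sending $e,e'$ either to two distinct loops or to a loop and a non-loop, and naturality forces $\eta$ to identify their images. For the $\edgewithloops$ target this contradicts discreteness of $P$ directly; for the $\fulltwo$ target I compute $\La(\fulltwo)$ via the monic pushout expressing $\fulltwo$ as two copies of $\edgewithloops$ glued along the discrete subgraph $\twodiscrete$ (each copy contributing one orientation of the non-loop edge) together with preservation of monic pushouts by $\La$, yielding a likewise discrete partition on $\fulltwo$. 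Lemma~\ref{lem:not_just_para} then excludes identification of parallel non-loop edges, leaving only parallel loops; non-edge-triviality together with Lemma~\ref{lemma:all_by_canon} forces every parallel-loop class in every graph to be identified, which is exactly the behaviour of $(\cat{cc},\pi)\circ(\cat{deloop},\delta)$.

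\textit{Non-discrete $P$.} For each of the four remaining partitions I match the target lasso by a lower-bound plus upper-bound argument. The \textit{lower bound}, that $\La$ identifies at least what the named lasso identifies, follows from naturality applied to suitable homomorphisms $\edgewithloops \to G$: for two loops in the same component (gather), two edges sharing a source (source), two edges sharing a target (target), or any two edges in a component (terminal), one constructs a homomorphism carrying $P$'s identified pair onto the pair of interest, iterating along connecting paths and closing under transitivity of the equivalence generated by $\eta_G$ where required. The \textit{upper bound} uses the probe $\fulltwo$: I compute $\La(\fulltwo)$ from $P$ using the monic pushout decomposition above, and then rule out any hypothetical extra identification in a graph $G$ by constructing a homomorphism $G \to \fulltwo$ sending the offending pair to distinct classes of $\La(\fulltwo)$—the availability of edges in both directions in $\fulltwo$ allows any two-colouring of $V(G)$ to extend to a homomorphism regardless of edge orientations in $G$. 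The indiscrete partition (terminal case) needs only the lower bound, which already collapses every edge in every component.

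The main obstacle is the upper bound for the $\cat{source}$, $\cat{target}$ and $\cat{gather}$ cases: one must extract the partition on $\fulltwo$ exactly from $P$ (via preservation of monic pushouts applied to $\fulltwo = \edgewithloops +_{\twodiscrete} \edgewithloops$) and then manufacture, for each candidate extra identification, a separating homomorphism into $\fulltwo$. This mirrors in structure the probe-graph-and-naturality strategy already used in Lemmas~\ref{lemma:basically_cc} and~\ref{lem:not_just_para} elsewhere in this appendix.
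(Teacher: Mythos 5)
Your overall toolkit (probe graphs plus naturality, with Lemmas~\ref{lem:hom_tool}, \ref{lem:not_just_para} and \ref{lemma:all_by_canon} doing the heavy lifting) matches the paper's, but your organizing principle --- classifying the lasso by the partition $P$ that $\eta_{\edgewithloops}$ induces on the three edges of $\edgewithloops$ --- does not work, and this breaks both your case split and your upper bounds. The two loops $e_u,e_v$ of $\edgewithloops$ are the \emph{designated} loops of the schema $\cat{SRGr}$: since $\eta_{\edgewithloops}$ is a morphism of reflexive graphs and a component-collapsing lasso identifies the vertices $u$ and $v$, naturality of $l\colon V\to E$ forces $\eta_{\edgewithloops}(e_u)=\eta_{\edgewithloops}(e_v)$ for \emph{every} component-collapsing lasso. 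Hence only two of your five partitions can occur: either $e_{uv}$ stays separate from the (already merged) designated loops, or it does not. Concretely, $(\cat{cc},\pi)\circ(\cat{deloop},\delta)$ and $(\cat{gather},\gamma)$ both induce the partition $\{e_u,e_v\}\,/\,\{e_{uv}\}$, while $(\cat{source},\sigma)$, $(\cat{target},\tau)$ and the terminal lasso all induce the indiscrete partition (under $\cat{source}$ one has $e_{uv}\sim e_u$ by shared source and $e_u\sim e_v$ is forced, so all three edges collapse; symmetrically for $\cat{target}$). So $P$ cannot separate the five lassos, your ``discrete $P$'' case is vacuous, and the matching of the three two-block partitions to gather/source/target rests on partitions that never arise.

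The same obstruction defeats your upper bound. Under $\cat{source}$, $\cat{target}$ and the terminal lasso, \emph{all four} edges of $\fulltwo$ are identified, so $\La(\fulltwo)$ is the terminal graph in all three cases and no homomorphism $G\to\fulltwo$ can ever witness a non-identification; likewise $\fulltwo$ cannot separate $(\cat{gather},\gamma)$ from $(\cat{cc},\pi)\circ(\cat{deloop},\delta)$, since neither $\edgewithloops$ nor $\fulltwo$ has a non-designated loop. To tell $\cat{source}$ from $\cat{target}$ you need a probe with two non-loop edges sharing a source but not a target (e.g.\ $v\leftarrow u\rightarrow w$), and to tell $\cat{gather}$ from $(\cat{cc},\pi)\circ(\cat{deloop},\delta)$ you need non-designated loops at distinct vertices of one component. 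The paper instead runs the case analysis on which \emph{types} of edge pairs get identified (two loops vs.\ a loop and a non-loop, via the two outcomes of Lemma~\ref{lem:hom_tool}), reads off which of the three generating relations (gather, source, target) are switched on, and takes their join to land in the terminal lasso when more than one is; you would need to restructure your argument along those lines, or at least replace $\edgewithloops$ and $\fulltwo$ by probes whose designated loops do not already absorb the information you are trying to extract.
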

\begin{proof}
    We first observe that, by Lemma~\ref{lemma:all_by_canon}, the components of $\eta$ identify all classes of parallel loops. If this is the only edge-identification which is performed, then $(\La,\eta)$ is the lasso $(\cat{cc},\pi)\circ(\cat{deloop},\delta)$.
    
    Now suppose that, in addition to this identification, there is some graph $G$ so that $\eta_G$ identifies a pair of edges $e,e'$ which are not a pair of parallel loops. By Lemma~\ref{lem:not_just_para}, we may assume that $e$ and $e'$ are not parallel.
    
    Let $f$ be the graph homomorphism given by Lemma~\ref{lem:hom_tool} and split into cases based on the outcomes (1) and (2).

    \textbf{Case 1.} First suppose that the codomain of $f$ is the graph $\edgewithloops$. That is, $f$ is a homomorphism from $G$ to $\edgewithloops$ so that $e$ and $e'$ are mapped to distinct loops. Then since~$\eta_G$ identifies $e$ and $e'$ and by the naturality of $\eta$, we must have that $\eta_\edgewithloops$ identifies the two loops of $\edgewithloops$. Now given any reflexive graph $H$, one may inject $\edgewithloops$ into $H$ to show that any two loops either end of an edge are identified under $\eta_H$. By extension, any two loops in the same connected component in $H$ are identified by $\eta_H$.
    We claim that the same outcome holds in the case that the codomain of $f$ is the graph $\fulltwo$. Indeed, since $\fulltwo$ can be expressed as the monic pushout of two copies of $\edgewithloops$ over the discrete reflexive graph $\twodiscrete$ on two vertices, the preservation of monic pushouts implies that $\eta_\edgewithloops$ identifies the loops of $\edgewithloops$.
    We conclude this case by noting the following: if the identification of loops in the same connected component is the only additional identification of edges on top of identifying parallel loops, then we get that $(\La,\eta)$ is the gathering lasso $(\cat{gather},\gamma)$.

    \textbf{Case 2.} First suppose that the codomain of $f$ is the graph $\edgewithloops$. So $f$ is a homomorphism from $G$ to $\edgewithloops$ so that $e$ and $e'$ are mapped to one loop and one non-loop. Then since $\eta_G$ identifies $e$ and $e'$ and by the naturality of $\eta$, we must have that $\eta_\edgewithloops$ identifies a loop with a non-loop from $\edgewithloops$. There are two subcases. Either the non-loop is identified with the loop at its source or identified with the loop at its target. Now given any reflexive graph $H$, one may inject $\edgewithloops$ into $H$ to show that any edges sharing a source (resp. a target) are identified under $\eta_H$.
    In the case where the codomain of $f$ is $\fulltwo$, we proceed analogously to in Case 1 and obtain the same conclusion. That is, the components of $\eta$ identify all edges sharing a source or identify all edges sharing a target. If these are the only identifications on top of identifying parallel loops, then $(\La,\eta)$ is either $(\cat{source},\sigma)$ or $(\cat{target},\tau)$.

    The conclusion of the proof comes from the following observation: out of the three relations we obtain as outcomes of Cases 1 and 2, if $\gamma$ identifies via more than one of them, then $(\La,\eta)$ is the terminal lasso. That is, the lasso which identifies all edges belonging to the same connected component.
\end{proof}

\body{
The above lemma completes our analysis of the category of lassos on the category $\cat{RGrph}$ of reflexive graphs, giving us Figure~\ref{fig:rgrph_lassos} from Section~\ref{sec:cat_of_lassos}.
}

\section{Proof of Lemma~\ref{lemma:definition-of-Q}}
\label{appendix:laxity}

\body{
We begin by first recalling background results due to~\cite{structured-decompositions}.
}

\begin{lemma}[Lemma 2.5.13 in~\cite{structured-decompositions}]\label{lemma: monic diagram presheaf}
Suppose that $\mathcal{I}$ is a class of all (small) monic diagrams and $\cat{C}$ is a category with pullbacks and pullback-stable colimits of type $\mathcal{I}$. By choosing representatives for every pullback, one obtains a well-defined presheaf 
\begin{equation*}
    \mathsf{MDgm} \colon \cat{C}^{\op} \to \cat{Set},
\end{equation*}
where for $X \in \cat{C}$, $\cat{MDgm}(X)$ is the set of diagrams $d \colon I \to \cat{C}$ in $\mathcal{I}$ equipped with a colimit cocone $\sigma \colon d \Rightarrow \Delta(X)$. Given a map $f \colon X \to Y$ in $\cat{C}$, $\cat{MDgm}(f)$ is the function that takes a colimit cocone $\sigma$ over $Y$ to its pullback $f^*(\sigma)$ over $X$.
\end{lemma}

\body{
It turns out we can promote the functor $\mathsf{MDgm}$ of the above lemma to a functor into $\cat{Cat}$ (the category of small categories) as shown in the following corollary. The proof of Corollary~\ref{cor:submon_extension} is straightforward and mechanical, but, since it is more categorically involved than the rest of the document, we defer it to Appendix~\ref{appendix:laxity} in order to not distract from the main points.
}

\begin{corollary}[Corollary~\ref{cor:submon_extension} restated]
    The functor $\mathsf{MDgm}:\cat{C}^{\op} \to\mathbf{Set}$ in Lemma~\ref{lemma: monic diagram presheaf} extends to a functor $\Ma:\cat{C}^{\op}\to \mathbf{Cat}$.
\end{corollary}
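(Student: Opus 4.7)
The plan is to upgrade $\cat{subMon}(C)$ from a bare set to a category by equipping it with morphisms of monic diagrams, and to extend the object-level pullback construction of Lemma~\ref{lemma:adhesive-colimits} to act on such morphisms via the universal property of pullbacks. Beyond the adhesive-category content already established in that lemma, the work is essentially bookkeeping: the only ingredient I will use is the uniqueness clause of the pullback universal property.

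First I define $\Ma(C)$. Its objects are pairs $(d,\phi)$ consisting of a monic diagram $d\colon \cat{J} \to \cat{C}$ (for some small $\cat{J}$) together with a chosen colimit isomorphism $\phi\colon \colim d \xrightarrow{\sim} C$. A morphism $(d_1,\phi_1) \to (d_2,\phi_2)$ is a morphism $(F,\gamma)\colon d_1 \to d_2$ in $\mathfrak{D}(\cat{C})$ (as in Definition~\ref{def:structured-decomposition}) subject to the condition that the morphism $\colim d_1 \to \colim d_2$ induced by $\gamma$ equals $\phi_2^{-1}\circ\phi_1$. Composition is inherited from $\mathfrak{D}(\cat{C})$, yielding a well-defined category $\Ma(C)$ whose object-set is precisely $\cat{subMon}(C)$.

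Next I define $\Ma(f)$ for a morphism $f\colon X \to Y$ in $\cat{C}$. On objects, $\Ma(f)$ acts by the pointwise-pullback construction of Lemma~\ref{lemma:adhesive-colimits}, with the colimit isomorphism produced by that lemma. On a morphism $(F,\gamma)\colon (d_1,\phi_1)\to(d_2,\phi_2)$ of $\Ma(Y)$, I keep the shape-level functor $F$ unchanged and construct a pulled-back natural transformation $\gamma^\flat$ pointwise: for each $j\in\cat{J}_1$, the colimit-compatibility condition on $(F,\gamma)$ makes the component $\gamma_j\colon d_1(j)\to d_2(F(j))$ into a morphism of arrows over $Y$, and the universal property of the pullback $X\times_Y d_2(F(j))$ induces a unique arrow $\gamma^\flat_j\colon X\times_Y d_1(j) \to X\times_Y d_2(F(j))$. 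Naturality of $\gamma^\flat$ in $j$ and the colimit-compatibility condition defining morphisms of $\Ma(X)$ both follow from the uniqueness clause of that same universal property, applied respectively to the prisms obtained from arrows of $\cat{J}_1$ and to the cocone maps.

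Finally, functoriality of $\Ma\colon \cat{C}^{\op}\to\cat{Cat}$ reduces to two standard checks. The equality $\Ma(\id_C)=\id_{\Ma(C)}$ is immediate: pulling back along an identity returns the original diagram and, by uniqueness, the original natural transformation. Functoriality under composition, i.e.\ $\Ma(gf)=\Ma(f)\Ma(g)$ for $X\xrightarrow{f} Y\xrightarrow{g} Z$, follows on objects from the pasting lemma for pullbacks (which is already implicit in Lemma~\ref{lemma:adhesive-colimits}) and on morphisms from uniqueness of the induced pullback map applied twice. The only real obstacle is administrative: one must track the chosen colimit isomorphisms throughout and confirm that each pointwise pullback assembles into a genuine morphism of $\mathfrak{D}(\cat{C})$ rather than producing spurious laxity, and all of this reduces mechanically to the uniqueness of induced maps in a pullback square.
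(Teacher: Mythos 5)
Your proposal is correct and follows essentially the same route as the paper's proof: $\Ma(f)$ acts on objects by the pointwise pullbacks of Lemma~\ref{lemma:adhesive-colimits} and on morphisms of decompositions by the unique arrows induced by the universal property of those pullbacks, with functoriality falling out of that same uniqueness (plus pullback pasting for composites). The only difference is presentational — you track chosen colimit isomorphisms explicitly and impose a colimit-compatibility condition on morphisms, whereas the paper leaves the morphisms of $\Ma(X)$ as those of $\mathfrak{D}(\cat{C})$ over decompositions with colimit $X$ — and this does not change the substance of the argument.
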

\begin{proof}[Proof of Corollary~\ref{cor:submon_extension}]
    For each $X\in\cat{C}$, define $\Ma(X)$ be the category of structured decompositions which yield $X$ as a colimit. For each $f\colon X\to Y$ in $\cat{C}$, we are required to define a functor $\Ma(f)\colon\Ma(X)\to\Ma(Y)$. On objects, the action of this functor is analogous to $\mathsf{MDgm}(f)$. That is, \[
\Ma(f)\colon(d_Y\in\Ma(Y))\mapsto(d_X\in\Ma(X)).
    \]
where $d_X$ is obtained from $d_Y$ by pointwise pullbacks along $f$. Finally, given a map $(F, 
\eta) \colon d_Y\to d_Y'$ between structured decompositions which have colimit $Y$, we define the action of $\Ma(f)$ on $(F,\eta)$ to be given by the collection of unique pullback arrows $\varphi_i$ making the following commute.
% https://q.uiver.app/#q=WzAsNyxbNCwxLCJZIl0sWzMsMCwiZCdfWShGaSkiXSxbMywyLCJYIl0sWzIsMSwiZCdfWShGaSkiXSxbMSwwLCJkX1koRmkpIl0sWzEsMiwiWCJdLFswLDEsImRfWChGaSkiXSxbNSwyLCJcXG1hdGhzZntpZH1fWCJdLFs0LDEsIlxcZXRhX2kiXSxbMywxXSxbMywyXSxbMSwwXSxbMiwwLCJmIiwyXSxbNiw0XSxbNiw1XSxbNiwzLCJcXGV4aXN0cyAhIFxcdmFycGhpX2kiLDEseyJzdHlsZSI6eyJib2R5Ijp7Im5hbWUiOiJkYXNoZWQifX19XSxbMywwLCIiLDIseyJzdHlsZSI6eyJuYW1lIjoiY29ybmVyIn19XV0=
\[\begin{tikzcd}
	& {d_Y(Fi)} && {d'_Y(Fi)} \\
	{d_X(Fi)} && {d'_Y(Fi)} && Y \\
	& X && X
	\arrow["{\eta_i}", from=1-2, to=1-4]
	\arrow[from=1-4, to=2-5]
	\arrow[from=2-1, to=1-2]
	\arrow["{\exists ! \varphi_i}"{description}, dashed, from=2-1, to=2-3]
	\arrow[from=2-1, to=3-2]
	\arrow[from=2-3, to=1-4]
	\arrow["\lrcorner"{anchor=center, pos=0.125, rotate=45}, draw=none, from=2-3, to=2-5]
	\arrow[from=2-3, to=3-4]
	\arrow["{\mathsf{id}_X}", from=3-2, to=3-4]
	\arrow["f"', from=3-4, to=2-5]
\end{tikzcd}\]
Since $d_X$ has the same shape as $d_Y$, and $d_X'$ the same shape as $d_Y'$, this properly defines a morphism of decompositions of the form $(F, \varphi)$ where the $i$-th component to $\varphi$ is $\varphi_i$ as in the diagram above. Thus this defines the action of $\Ma$ on morphisms since one then has $\Ma(f)(F, \eta) := (F, \varphi)$. This assignment is functorial since identities are preserved and, by the universal property of the morphisms $\varphi_i$, composition is respected.
\end{proof}

\begin{lemma}[Lemma~\ref{lemma:definition-of-Q} restated]
The functions in Lemma~\ref{def_Q} assemble into a \textit{lax} natural transformation
% https://q.uiver.app/#q=WzAsMixbMCwwLCJcXGNhdHtDfV57b3B9Il0sWzIsMCwiXFxjYXR7Q2F0fSJdLFswLDEsIlxcTWEiLDAseyJjdXJ2ZSI6LTJ9XSxbMCwxLCJcXE1hIFxcY2lyYyBcXExhXntvcH0iLDIseyJjdXJ2ZSI6Mn1dLFsyLDMsIlxcUWEiLDAseyJzaG9ydGVuIjp7InNvdXJjZSI6MjAsInRhcmdldCI6MjB9fV1d
\[\begin{tikzcd}
	{\cat{C}^{\op}} && {\cat{Cat}}
	\arrow[""{name=0, anchor=center, inner sep=0}, "\Ma", curve={height=-12pt}, from=1-1, to=1-3]
	\arrow[""{name=1, anchor=center, inner sep=0}, "{\Ma \circ \La^{\op}}"', curve={height=12pt}, from=1-1, to=1-3]
	\arrow["\Qa", shorten <=3pt, shorten >=3pt, Rightarrow, from=0, to=1]
\end{tikzcd}\]
\end{lemma}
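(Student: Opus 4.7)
The plan is to proceed in three stages: promote each $\Qa_X$ to a functor, construct a component 2-cell $\Qa_{f}$ for each morphism $f\colon X \to Y$ in $\cat{C}$, and verify the coherence axioms of a lax natural transformation. Throughout, the two workhorses are the universal property of image factorizations (together with Lemma~\ref{lemma:images_map}) and the naturality square $\La f \circ \eta_X = \eta_Y \circ f$.

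First I would promote $\Qa_X$ to a functor $\Ma(X) \to \Ma(\La X)$. Given a morphism $(F,\gamma)\colon d \to d'$ in $\Ma(X)$ (where both $d$ and $d'$ have colimit $X$ with cocones $\Lambda$ and $\Lambda'$), the cocone identity forces $\Lambda'_{Fi} \circ \gamma_i = \Lambda_i$ and hence $\eta_X \circ \Lambda'_{Fi} \circ \gamma_i = \eta_X \circ \Lambda_i$. Lemma~\ref{lemma:images_map} then yields a unique monomorphism $\imageObj(\eta_X \Lambda_i) \hookrightarrow \imageObj(\eta_X \Lambda'_{Fi})$ natural in $i$, which is exactly the desired morphism $\Qa_X(d) \to \Qa_X(d')$ in $\Ma(\La X)$. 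Functoriality in $(F,\gamma)$ is a standard consequence of uniqueness in the universal property of images.

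Next I would construct the lax 2-cell. Fix $f\colon X \to Y$ in $\cat{C}$ and $d \in \Ma(Y)$ with colimit cocone $\Lambda$, and set $X_i := X \times_Y d(i)$ with projections $p_X^i\colon X_i \to X$ and $p_d^i\colon X_i \to d(i)$, so $\Ma(f)(d)(i) = X_i$. For each $i$, the square
\[
\La f \circ \eta_X \circ p_X^i \;=\; \eta_Y \circ f \circ p_X^i \;=\; \eta_Y \circ \Lambda_i \circ p_d^i
\]
combined with Lemma~\ref{image_epi_precomp} identifies the image of $X_i \to \La Y$ with a subobject of $\Qa_Y(d)(i) = \imageObj(\eta_Y \Lambda_i)$, while the image of $\eta_X \circ p_X^i$ is $\Qa_X(\Ma(f)(d))(i)$. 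Factoring $\La f$ through images gives a map $\Qa_X(\Ma(f)(d))(i) \to \Qa_Y(d)(i)$, which together with the monomorphism $\Qa_X(\Ma(f)(d))(i) \hookrightarrow \La X$ satisfies the compatibility needed to induce a unique arrow
\[
\Qa_X(\Ma(f)(d))(i) \;\longrightarrow\; \La X \times_{\La Y} \Qa_Y(d)(i) \;=\; \Ma(\La f)(\Qa_Y(d))(i)
\]
into the pullback. Naturality of these arrows in $i$ follows from the uniqueness clauses of both the image and pullback universal properties; this assembles into a natural transformation $\Qa_f \colon \Qa_X \circ \Ma(f) \Rightarrow \Ma(\La f) \circ \Qa_Y$ (the standard lax direction for $\cat{C}^{\op} \to \cat{Cat}$).

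Finally, I would check the two coherence axioms. For identities, when $f = \id_X$ both pullback and image constructions reduce tautologically, so $\Qa_{\id_X}$ is the identity 2-cell. For composition $X \xrightarrow{f} Y \xrightarrow{g} Z$, one must show that $\Qa_{gf}$ equals the horizontal composite of $\Qa_f$ and $\Qa_g$; this reduces to the fact that (i) iterated pullbacks compose up to canonical isomorphism (used in the existing proof of Corollary~\ref{cor:submon_extension}), and (ii) the induced map into a composite pullback coincides with the composite of the induced maps into the individual pullbacks, which is precisely the uniqueness clause of the pullback universal property. Combined with the analogous uniqueness for the induced maps between images, both the identity and composition axioms hold essentially for free.

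The main obstacle, and the reason the transformation is merely lax rather than strict, is that the map $\Qa_X(\Ma(f)(d))(i) \to \Ma(\La f)(\Qa_Y(d))(i)$ constructed above need not be an isomorphism: pulling back along $\La f$ can produce components of $\La X \times_{\La Y} \Qa_Y(d)(i)$ that are not in the image of $\eta_X \circ p_X^i$, so the comparison map can fail to be epic. Writing out this obstruction carefully (and confirming that it does not obstruct naturality in $i$) is the delicate part of the proof; everything else is bookkeeping with universal properties.
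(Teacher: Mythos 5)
Your proposal is correct and follows essentially the same route as the paper's proof: promote each $\Qa_X$ to a functor via the universal property of images (Lemma~\ref{lemma:images_map}), then build the 2-cell $\Qa_f$ componentwise by first inducing a map into $\imageObj(\eta_Y\Lambda_i)$ and then into the pullback $\La X \times_{\La Y}\imageObj(\eta_Y\Lambda_i)$, with all coherence following from uniqueness of universal arrows. The only point to tighten is that beyond naturality in the index $i$ one must also check naturality of $\Qa_f$ in the diagram $d\in\Ma Y$ (the paper does this with a separate large diagram chase), though this too follows from the same uniqueness clauses you invoke; your closing remark on why the transformation is lax rather than strict is a nice addition the paper does not spell out.
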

\begin{proof}[Proof of Lemma~\ref{lemma:definition-of-Q}]
Recall that the function \[\Qa_X \colon \Ma X \to \Ma \La X,\] acts by sending a diagram $d\colon\cat{J}\to\cat{C}$ with colimit colone $(\Lambda,X)$ to the diagram of images under $\eta_X \Lambda$.
By Lemma~\ref{def_Q}, the action of $\Qa_X \colon \Ma X \to \Ma \La X$ on any object $d \in \Ma X$ is indeed a diagram whose colimit is $\La X$.

Having given the action of $\Qa_X$ on objects, we will turn it into a functor by defining its action on morphisms. This will follow immediately by the universal property of images. To that end, consider the following morphism of diagrams $d \to d'$ in $\Ma X$: \[(F, \alpha) \colon \bigl ( \cat{J} \xrightarrow{d} \cat{C} \bigr) \to \bigl ( \cat{J'} \xrightarrow{d'} \cat{C} \bigr).\]
For any $i \in \cat{J}$, this yields the following diagram where one deduces the existence of the monomorphism $q_i$ by the universal property of images (and that it is a monomorphism by Paragraph~\ref{para:mono-triangle}).  
\begin{equation}\label{diagram:laxity-functor}
% https://q.uiver.app/#q=WzAsOCxbMCwxLCJkJyhGaSkiXSxbNCwxLCJYIl0sWzQsMywiXFxMYSBYIl0sWzAsMywiXFxMYSBkJyhGaSkiXSxbMiw0LCJcXFFhKGQnKShGaSkiXSxbMSwwLCJkKGkpIl0sWzEsMiwiXFxMYSBkKGkpIl0sWzIsMiwiXFxRYShkKShpKSJdLFswLDEsIlxcTGFtYmRhJ197Rml9IiwxLHsibGFiZWxfcG9zaXRpb24iOjMwLCJzdHlsZSI6eyJ0YWlsIjp7Im5hbWUiOiJob29rIiwic2lkZSI6InRvcCJ9fX1dLFsxLDIsIlxcZXRhX1giLDEseyJsYWJlbF9wb3NpdGlvbiI6MzAsInN0eWxlIjp7ImhlYWQiOnsibmFtZSI6ImVwaSJ9fX1dLFswLDMsIlxcZXRhX3tGaX0iLDEseyJsYWJlbF9wb3NpdGlvbiI6MzAsInN0eWxlIjp7ImhlYWQiOnsibmFtZSI6ImVwaSJ9fX1dLFszLDIsIlxcTGEgXFxMYW1iZGEnX3tGaX0iLDEseyJsYWJlbF9wb3NpdGlvbiI6MzB9XSxbMyw0LCJcXGJldGFfMSIsMix7InN0eWxlIjp7ImhlYWQiOnsibmFtZSI6ImVwaSJ9fX1dLFs0LDIsIlxcYmV0YV8yIiwyLHsic3R5bGUiOnsidGFpbCI6eyJuYW1lIjoiaG9vayIsInNpZGUiOiJ0b3AifX19XSxbNSw2LCJcXGV0YV97aX0iLDEseyJsYWJlbF9wb3NpdGlvbiI6MzB9XSxbNSwwLCJcXGFscGhhX2kiLDFdLFs2LDMsIlxcTGEgXFxhbHBoYV9pIiwyXSxbNSwxLCJcXExhbWJkYV97aX0iLDFdLFs2LDIsIlxcTGEgXFxMYW1iZGFfe2l9IiwwLHsiY3VydmUiOi01fV0sWzYsNywiIiwwLHsic3R5bGUiOnsiaGVhZCI6eyJuYW1lIjoiZXBpIn19fV0sWzcsMiwiIiwwLHsic3R5bGUiOnsidGFpbCI6eyJuYW1lIjoiaG9vayIsInNpZGUiOiJ0b3AifX19XSxbNyw0LCJcXGV4aXN0cyAhIHFfaSIsMSx7ImxhYmVsX3Bvc2l0aW9uIjozMCwiY29sb3VyIjpbMzA3LDk2LDYwXSwic3R5bGUiOnsidGFpbCI6eyJuYW1lIjoiaG9vayIsInNpZGUiOiJ0b3AifX19LFszMDcsOTYsNjAsMV1dXQ==
\begin{tikzcd}
	& {d(i)} \\
	{d'(Fi)} &&&& X \\
	& {\La d(i)} & {\Qa(d)(i)} \\
	{\La d'(Fi)} &&&& {\La X} \\
	&& {\Qa(d')(Fi)}
	\arrow["{\alpha_i}"{description}, from=1-2, to=2-1]
	\arrow["{\Lambda_{i}}"{description}, from=1-2, to=2-5]
	\arrow["{\eta_{i}}"{description, pos=0.3}, from=1-2, to=3-2]
	\arrow["{\Lambda'_{Fi}}"{description, pos=0.3}, hook, from=2-1, to=2-5]
	\arrow["{\eta_{Fi}}"{description, pos=0.3}, two heads, from=2-1, to=4-1]
	\arrow["{\eta_X}"{description, pos=0.3}, two heads, from=2-5, to=4-5]
	\arrow[two heads, from=3-2, to=3-3]
	\arrow["{\La \alpha_i}"', from=3-2, to=4-1]
	\arrow["{\La \Lambda_{i}}", curve={height=-37pt}, from=3-2, to=4-5]
	\arrow[hook, from=3-3, to=4-5]
	\arrow["{\exists ! q_i}"{description, pos=0.3}, color={rgb,255:red,251;green,55;blue,228}, hook, from=3-3, to=5-3]
	\arrow["{\La \Lambda'_{Fi}}"{description, pos=0.3}, from=4-1, to=4-5]
	\arrow["{\beta_1}"', two heads, from=4-1, to=5-3]
	\arrow["{\beta_2}"', hook, from=5-3, to=4-5]
\end{tikzcd}
\end{equation}
The morphisms $q_i$ assemble into a natural transformation $\beta$ which we will use to define the morphism \[\Qa_X(F, \alpha) \colon \Qa_X(d) \to \Qa_x(d')\] as $\Qa_X(F, \alpha) := (F, \beta)$. Having defined the action of $\Qa_X$ on objects and morphisms, it is easy to check functoriality: preservation of identities is immediate and preservation of composition follows by Lemma~\ref{lemma:images_map}. 

All that remains is to show that $\Qa$ is a lax natural transformation. This amounts to exhibiting a natural transformation $\Qa(f)$ as in the following diagram for every morphism $f \colon X \to Y$ in~$\cat{C}$. 
% https://q.uiver.app/#q=WzAsNCxbMCwwLCJcXE1hIFgiXSxbMCwxLCJcXE1hIFkiXSxbMiwwLCJcXE1hIFxcTGEgWCJdLFsyLDEsIlxcTWEgXFxMYSBZIl0sWzEsMCwiXFxNYSBmIl0sWzMsMiwiXFxNYSBcXExhIGYiLDJdLFswLDIsIlxcUWFfWCJdLFsxLDMsIlxcUWFfWSIsMl0sWzAsMywiXFxRYV9mIiwxLHsic2hvcnRlbiI6eyJzb3VyY2UiOjEwLCJ0YXJnZXQiOjEwfSwibGV2ZWwiOjJ9XV0=
\[\begin{tikzcd}
	{\Ma X} && {\Ma \La X} \\
	{\Ma Y} && {\Ma \La Y}
	\arrow["{\Qa_X}", from=1-1, to=1-3]
	\arrow["{\Qa_f}"{description}, shorten <=5pt, shorten >=5pt, Rightarrow, from=1-1, to=2-3]
	\arrow["{\Ma f}", from=2-1, to=1-1]
	\arrow["{\Qa_Y}"', from=2-1, to=2-3]
	\arrow["{\Ma \La f}"', from=2-3, to=1-3]
\end{tikzcd}\]
Each component of $\Qa(f)$ will be a morphism of diagrams in $\Ma \La X$. Since neither $\Ma$ nor $\Qa$ change the shapes of any diagram they are applied to, $\Qa(f)$ will be a morphism of the following form (where $\nu$ will be defined below) \begin{equation}\label{eqn:component-of-laxator}
    (\id, \nu)_d \colon (\Qa_X \circ \Ma(f))(d)(i) \to ((\Ma\La)(f) \circ \Qa_Y)(d)(i)
\end{equation}  
Towards establishing the definition of $\nu$, consider -- for $i$ any object of $\domain(d)$ -- the following diagram. 
\begin{equation}\label{diagram:lax-natural-1}
    % https://q.uiver.app/#q=WzAsOSxbMCwxLCJcXExhIFgiXSxbMCwzLCJcXExhIFkiXSxbMiwxLCJYIl0sWzIsMywiWSJdLFszLDEsIlggXFx0aW1lc19ZIGQoaSkiXSxbMywzLCJkKGkpIl0sWzEsMCwiXFxpbWFnZU9iantcXGV0YV9YIFxcZWxsX2l9Il0sWzEsNCwiXFxpbWFnZU9iantcXGV0YV9ZIFxcTGFtYmRhX2l9Il0sWzEsMiwiKFxcTGEgWCkgXFx0aW1lc197XFxMYSBZfSAoXFxpbWFnZU9iantcXGV0YV9ZIFxcTGFtYmRhX2l9KSJdLFs1LDMsIlxcTGFtYmRhX2kiLDIseyJzdHlsZSI6eyJ0YWlsIjp7Im5hbWUiOiJob29rIiwic2lkZSI6ImJvdHRvbSJ9fX1dLFs0LDVdLFs0LDIsIlxcZWxsX2kiLDAseyJzdHlsZSI6eyJ0YWlsIjp7Im5hbWUiOiJob29rIiwic2lkZSI6InRvcCJ9fX1dLFsyLDAsIlxcZXRhX1giLDAseyJsYWJlbF9wb3NpdGlvbiI6MzAsInN0eWxlIjp7ImhlYWQiOnsibmFtZSI6ImVwaSJ9fX1dLFszLDEsIlxcZXRhX1kiLDIseyJsYWJlbF9wb3NpdGlvbiI6MzAsInN0eWxlIjp7ImhlYWQiOnsibmFtZSI6ImVwaSJ9fX1dLFswLDEsIlxcTGEgZiIsMl0sWzQsNiwiIiwyLHsic3R5bGUiOnsiaGVhZCI6eyJuYW1lIjoiZXBpIn19fV0sWzYsMCwiIiwyLHsic3R5bGUiOnsidGFpbCI6eyJuYW1lIjoiaG9vayIsInNpZGUiOiJ0b3AifX19XSxbNSw3LCIiLDAseyJzdHlsZSI6eyJoZWFkIjp7Im5hbWUiOiJlcGkifX19XSxbNywxLCIiLDAseyJzdHlsZSI6eyJ0YWlsIjp7Im5hbWUiOiJob29rIiwic2lkZSI6InRvcCJ9fX1dLFsyLDMsImYiLDJdLFs4LDddLFs4LDAsIiIsMCx7InN0eWxlIjp7InRhaWwiOnsibmFtZSI6Imhvb2siLCJzaWRlIjoiYm90dG9tIn19fV0sWzYsNywiXFxleGlzdHMgISB1X2kiLDAseyJsYWJlbF9wb3NpdGlvbiI6NjAsImN1cnZlIjotMywiY29sb3VyIjpbMjQwLDYwLDYwXSwic3R5bGUiOnsiYm9keSI6eyJuYW1lIjoiZGFzaGVkIn19fSxbMjQwLDYwLDYwLDFdXSxbNiw4LCJcXGV4aXN0cyAhIFxcbnVfaSIsMSx7ImxhYmVsX3Bvc2l0aW9uIjozMCwiY29sb3VyIjpbMCw2MCw2MF0sInN0eWxlIjp7InRhaWwiOnsibmFtZSI6Imhvb2siLCJzaWRlIjoiYm90dG9tIn0sImJvZHkiOnsibmFtZSI6ImRhc2hlZCJ9fX0sWzAsNjAsNjAsMV1dXQ==
\begin{tikzcd}[row sep=small]
	& {\imageObj{\eta_X \ell_i}} \\
	{\La X} && X & {X \times_Y d(i)} \\
	& {(\La X) \times_{\La Y} (\imageObj{\eta_Y \Lambda_i})} \\
	{\La Y} && Y & {d(i)} \\
	& {\imageObj{\eta_Y \Lambda_i}}
	\arrow[hook, from=1-2, to=2-1]
	\arrow["{\exists ! \nu_i}"{description, pos=0.3}, color={rgb,255:red,214;green,92;blue,92}, dashed, hook', from=1-2, to=3-2]
	\arrow["{\exists ! u_i}"{pos=0.6}, color={rgb,255:red,92;green,92;blue,214}, curve={height=-18pt}, dashed, from=1-2, to=5-2]
	\arrow["{\La f}"', from=2-1, to=4-1]
	\arrow["{\eta_X}"{pos=0.3}, two heads, from=2-3, to=2-1]
	\arrow["f"', from=2-3, to=4-3]
	\arrow[two heads, from=2-4, to=1-2]
	\arrow["{\ell_i}", hook, from=2-4, to=2-3]
	\arrow[from=2-4, to=4-4]
	\arrow[hook', from=3-2, to=2-1]
	\arrow[from=3-2, to=5-2]
	\arrow["{\eta_Y}"'{pos=0.3}, two heads, from=4-3, to=4-1]
	\arrow["{\Lambda_i}"', hook', from=4-4, to=4-3]
	\arrow[two heads, from=4-4, to=5-2]
	\arrow[hook, from=5-2, to=4-1]
\end{tikzcd}
\end{equation}
The blue morphism $u_i$ in the above diagram is deduced by the universal property of images. Using $u_i$ and the universal property of pullbacks, one obtains the unique arrow $\nu_i$ shown in red in Diagram~\ref{diagram:lax-natural-1}. Finally, observing that 
\begin{align*}
    (\Qa_X \circ \Ma(f))(d)(i) & = \imageObj(\eta_X \ell_i) &\text{and } \\
    ((\Ma\La)(f) \circ \Qa_Y)(d)(i) &= (\La X) \times_{\La Y} \imageObj{\eta_Y \Lambda_i},
\end{align*}
one finds that the natural transformation $\nu$ will have as its $i$-component the unique pullback arrow $\nu_i$ of Diagram~\ref{diagram:lax-natural-1}. Thus, as desired, we have now defined, for any morphism $f \colon X \to Y$, the natural transformation $\Qa(f)$ whose component at any object $d \in \Ma Y$ is given by \[\Qa(f)_d := (\id, \nu)_d \quad \text{ (as in Equation~\ref{eqn:component-of-laxator}}).\] 

All that remains to be shown is that $\Qa(f)$ is natural. To that end, take any morphism of diagrams \[(F, \mu) \colon (\cat{J_1} \xrightarrow{d_1} \cat{C}) \to (\cat{J_2} \xrightarrow{d_2} \cat{C})\] in $\Ma Y$. By two applications of the exactly the same reasoning as in Diagram~\ref{diagram:lax-natural-1}, on obtains the red and blue portions of the following diagram (the red portion corresponds to $d_2$ and the blue portion corresponds to $d_1$).
\begin{equation}\label{diagram:lax-natural-2}
\adjustbox{scale=1.5, max width=\textwidth}{
\begin{tikzcd}
	&& {\imageObj{\eta_X \ell_{1,i}}} \\
	&&&& {X \times_Y d_1(i) = \bigl ( X \times_Y d_2(Fi) \bigr ) \times_{d_2(Fi)} d_1(i) } \\
	&& {(\La X) \times_{\La Y} (\imageObj{\eta_Y \Lambda_{1,i}})} \\
	&&&& {d_1(i)} \\
	& {\imageObj{\eta_X \ell_{2,Fi}}} & {\imageObj{\eta_Y \Lambda_{2,Fi}}} \\
	{\La X} && X & {X \times_Y d_2(Fi)} \\
	& {(\La X) \times_{\La Y} (\imageObj{\eta_Y \Lambda_{2,Fi}})} \\
	{\La Y} && Y & {d_2(Fi)} \\
	& {\imageObj{\eta_Y \Lambda_{2,Fi}}}
	\arrow["{\exists ! \nu_{1,i}}"{description, pos=0.3}, color={rgb,255:red,92;green,92;blue,214}, dashed, hook', from=1-3, to=3-3]
	\arrow["{\exists ! q_1}"{description}, color={rgb,255:red,253;green,53;blue,206}, from=1-3, to=5-2]
	\arrow["{\exists ! u_{1,i}}"{pos=0.6}, color={rgb,255:red,92;green,92;blue,214}, curve={height=-18pt}, dashed, from=1-3, to=5-3]
	\arrow[draw={rgb,255:red,92;green,92;blue,214}, curve={height=18pt}, hook, from=1-3, to=6-1]
	\arrow[draw={rgb,255:red,92;green,92;blue,214}, two heads, from=2-5, to=1-3]
	\arrow[draw={rgb,255:red,92;green,92;blue,214}, from=2-5, to=4-5]
	\arrow["{\ell_{1,i}}"{description}, color={rgb,255:red,92;green,92;blue,214}, hook, from=2-5, to=6-3]
	\arrow[dashed, from=2-5, to=6-4]
	\arrow[draw={rgb,255:red,92;green,92;blue,214}, from=3-3, to=5-3]
	\arrow[draw={rgb,255:red,92;green,92;blue,214}, curve={height=18pt}, hook, from=3-3, to=6-1]
	\arrow["{\exists ! q_3}"{description}, color={rgb,255:red,253;green,53;blue,206}, from=3-3, to=7-2]
	\arrow[draw={rgb,255:red,92;green,92;blue,214}, two heads, from=4-5, to=5-3]
	\arrow["{\Lambda_{1,i}}"{description, pos=0.3}, color={rgb,255:red,92;green,92;blue,214}, from=4-5, to=8-3]
	\arrow["{\mu_i}"{description}, color={rgb,255:red,253;green,53;blue,206}, from=4-5, to=8-4]
	\arrow[draw={rgb,255:red,214;green,92;blue,92}, hook, from=5-2, to=6-1]
	\arrow["{\exists ! \nu_{2,Fi}}"{description, pos=0.3}, color={rgb,255:red,214;green,92;blue,92}, dashed, hook', from=5-2, to=7-2]
	\arrow["{\exists ! u\_{2,Fi}}"{pos=0.6}, color={rgb,255:red,214;green,92;blue,92}, curve={height=-18pt}, dashed, from=5-2, to=9-2]
	\arrow[draw={rgb,255:red,92;green,92;blue,214}, hook, from=5-3, to=8-1]
	\arrow["{\exists ! q_2}"{description}, color={rgb,255:red,253;green,53;blue,206}, from=5-3, to=9-2]
	\arrow["{\La f}"', from=6-1, to=8-1]
	\arrow["{\eta_X}"{description, pos=0.3}, two heads, from=6-3, to=6-1]
	\arrow["f"', from=6-3, to=8-3]
	\arrow[draw={rgb,255:red,214;green,92;blue,92}, two heads, from=6-4, to=5-2]
	\arrow["{\ell_{2,Fi}}"{description}, color={rgb,255:red,214;green,92;blue,92}, hook, from=6-4, to=6-3]
	\arrow[draw={rgb,255:red,214;green,92;blue,92}, from=6-4, to=8-4]
	\arrow[draw={rgb,255:red,214;green,92;blue,92}, hook', from=7-2, to=6-1]
	\arrow[draw={rgb,255:red,214;green,92;blue,92}, from=7-2, to=9-2]
	\arrow["{\eta_Y}"{description, pos=0.3}, two heads, from=8-3, to=8-1]
	\arrow["{\Lambda_{2,Fi}}"{description}, color={rgb,255:red,214;green,92;blue,92}, hook', from=8-4, to=8-3]
	\arrow[draw={rgb,255:red,214;green,92;blue,92}, two heads, from=8-4, to=9-2]
	\arrow[draw={rgb,255:red,214;green,92;blue,92}, hook, from=9-2, to=8-1]
\end{tikzcd}
}
\end{equation}
The existence of the pink arrows $q_1$, $q_2$ and $q_3$ shown in Diagram~\ref{diagram:lax-natural-2} are deduced as follows: $q_1$ and $q_2$ follow by the universal property of images while $q_3$ follows by the universal property of pullbacks. The proof of naturality of $\Qa(f)$ is now complete by observing that:
\begin{align*}
     (\Qa_X \circ \Ma(f))(d_1)(i) & = \imageObj(\eta_x \ell_{1,i}) &\text{and } \\
     (\Qa_X \circ \Ma(f))(d_2)(i) & = \imageObj(\eta_x \ell_{2,Fi}) &\text{and } \\
     (\Qa_X \circ \Ma(f))(F, \mu) &= q_1 &\text{and } \\
    ((\Ma\La)(f) \circ \Qa_Y)(d_1)(i) &= (\La X) \times_{\La Y} \imageObj{\eta_Y \Lambda_{1,i}} &\text{and } \\
    ((\Ma\La)(f) \circ \Qa_Y)(d_2)(i) &= (\La X) \times_{\La Y} \imageObj{\eta_Y \Lambda_{2,Fi}} &\text{and } \\
    ((\Ma\La)(f) \circ \Qa_Y)(F, \mu) &= q_3 &\text{and } \\
    \nu_{2, Fi} \circ q_1 &= q_3 \circ \nu_{1,i}.
\end{align*}
\end{proof}

\body{
Consider a strong lasso $(\La, \eta)$ and the functor $\Ma$ of Corollary~\ref{cor:submon_extension} as shown below.

% https://q.uiver.app/#q=WzAsMyxbMCwwLCJcXGNhdHtDfV57b3B9Il0sWzIsMCwiXFxjYXR7Q31ee29wfSJdLFs0LDAsIlxcY2F0e0NhdH0iXSxbMCwxLCJcXGlkXntvcH0iLDAseyJjdXJ2ZSI6LTJ9XSxbMCwxLCJcXExhXntvcH0iLDIseyJjdXJ2ZSI6Mn1dLFsxLDIsIlxcTWEiXSxbMyw0LCJcXGV0YSIsMCx7InNob3J0ZW4iOnsic291cmNlIjoyMCwidGFyZ2V0IjoyMH19XV0=
\[\begin{tikzcd}
	{\cat{C}^{\op}} && {\cat{C}^{\op}} && {\cat{Cat}}
	\arrow[""{name=0, anchor=center, inner sep=0}, "{\id^{\op}}", curve={height=-12pt}, from=1-1, to=1-3]
	\arrow[""{name=1, anchor=center, inner sep=0}, "{\La^{\op}}"', curve={height=12pt}, from=1-1, to=1-3]
	\arrow["\Ma", from=1-3, to=1-5]
	\arrow["\eta", shorten <=3pt, shorten >=3pt, Rightarrow, from=0, to=1]
\end{tikzcd}\]
From the above, one defines the natural transformation
% https://q.uiver.app/#q=WzAsMixbMCwwLCJcXGNhdHtDfV57b3B9Il0sWzMsMCwiXFxjYXR7Q2F0fSJdLFswLDEsIlxcTWEiLDAseyJjdXJ2ZSI6LTJ9XSxbMCwxLCJcXE1hIFxcY2lyYyBcXExhXntvcH0iLDIseyJjdXJ2ZSI6Mn1dLFszLDIsIlxcTWEgXFxjaXJjIFxcZXRhIiwyLHsic2hvcnRlbiI6eyJzb3VyY2UiOjIwLCJ0YXJnZXQiOjIwfX1dXQ==
\[\begin{tikzcd}
	{\cat{C}^{\op}} &&& {\cat{Cat}}
	\arrow[""{name=0, anchor=center, inner sep=0}, "\Ma", curve={height=-12pt}, from=1-1, to=1-4]
	\arrow[""{name=1, anchor=center, inner sep=0}, "{\Ma \circ \La^{\op}}"', curve={height=12pt}, from=1-1, to=1-4]
	\arrow["{\Ma \circ \eta}"', shorten <=3pt, shorten >=3pt, Rightarrow, from=1, to=0]
\end{tikzcd}\]
via ``whiskering''. That is, where the component of $\Ma \circ \eta$ at any object $x$ is simply the action \[\Ma(X \xrightarrow{\eta_x} \La X) \colon \Ma (\La X) \to \Ma(X)\]
taking diagrams whose colimits are $\La X$ to diagrams whose colimits are $X$. 
As we saw above, the construction of Paragraph~\ref{para:defining-Q} yields a \textit{lax} natural transformation in \textit{the opposite direction} as shown in Lemmas~\ref{def_Q} and~\ref{lemma:definition-of-Q}. The fact that the natural transformation in question is lax hints at deeper categorical structure which, despite being interesting, we leave as future work.
}

\section{Double Category of Lassos}
\label{appendix:double}

\body{
Recall the definition of the category of lassos on $\cat{C}$ from Section~\ref{sec:cat_of_lassos}. We can view maps between lassos as 2-cells as in
% https://q.uiver.app/#q=WzAsNCxbMCwwLCJcXGJ1bGxldCJdLFsyLDAsIlxcYnVsbGV0Il0sWzAsMiwiXFxidWxsZXQiXSxbMiwyLCJcXGJ1bGxldCJdLFswLDEsIihcXExhLFxcZXRhKSJdLFswLDIsIlxcaWQiLDJdLFsxLDMsIlxcaWQiXSxbMiwzLCIoXFxMYScsXFxldGEnKSIsMl0sWzQsNywiZiIsMCx7InNob3J0ZW4iOnsic291cmNlIjoyMCwidGFyZ2V0IjoyMH19XV0=
\[\begin{tikzcd}
	\bullet && \bullet \\
	\\
	\bullet && \bullet
	\arrow[""{name=0, anchor=center, inner sep=0}, "{(\La,\eta)}", from=1-1, to=1-3]
	\arrow["\id"', from=1-1, to=3-1]
	\arrow["\id", from=1-3, to=3-3]
	\arrow[""{name=1, anchor=center, inner sep=0}, "{(\La',\eta')}"', from=3-1, to=3-3]
	\arrow["f", shorten <=9pt, shorten >=9pt, Rightarrow, from=0, to=1]
\end{tikzcd}\]
}

\body{
Furthermore, give four lassos $(\La,\eta)$,$(\La',\eta')$, $(\Ma,\mu)$ and $(\Ma',\mu')$, along with two maps $ (\La,\eta)\xrightarrow{f}(\La'\eta')$ and $(\Ma,\mu)\xrightarrow{g}(\Ma',\mu')$ in the category of lassos, we can define a kind of `horizontal composition' in the following way. Let $g*f$ be the collection of maps given by the diagonals of the following commutative square.
% https://q.uiver.app/#q=WzAsNCxbMCwwLCJcXE1hXFxMYSBYIl0sWzIsMCwiXFxNYSdcXExhIFgiXSxbMCwyLCJcXE1hXFxMYScgWCJdLFsyLDIsIlxcTWEnXFxMYScgWCJdLFswLDEsImdfe1xcTGEgWH0iXSxbMiwzLCJnX3tcXExhJ1h9Il0sWzAsMiwiXFxNYSBmX1giLDFdLFsxLDMsIlxcTWEnZl9YIiwxXSxbMCwzLCIoZypmKV9YIiwxXV0=
\[\begin{tikzcd}
	{\Ma\La X} && {\Ma'\La X} \\
	\\
	{\Ma\La' X} && {\Ma'\La' X}
	\arrow["{g_{\La X}}", from=1-1, to=1-3]
	\arrow["{\Ma f_X}"{description}, from=1-1, to=3-1]
	\arrow["{(g*f)_X}"{description}, from=1-1, to=3-3]
	\arrow["{\Ma'f_X}"{description}, from=1-3, to=3-3]
	\arrow["{g_{\La'X}}", from=3-1, to=3-3]
\end{tikzcd}\]

In the above context, we make the following proposition.
}

\begin{proposition}
    Given a category $\cat{C}$, the category of lassos on $\cat{C}$ is a double category with a single object. 
\end{proposition}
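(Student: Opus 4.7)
The plan is to exhibit the double category data explicitly and then verify the structure axioms. I would take the single object $*$ to be a formal placeholder; the horizontal morphisms $* \to *$ to be lassos on $\cat{C}$, with horizontal composition being the operation $\circ$ of lassos defined earlier in this section and horizontal identity being the trivial lasso; the vertical morphisms $* \to *$ to consist only of $\id_*$; and the squares with horizontal sides $(\La,\eta), (\La',\eta')$ and identity vertical sides to be morphisms $f \colon (\La,\eta) \to (\La',\eta')$ in $\cat{Lasso}(\cat{C})$. Vertical composition of squares is then the composition in $\cat{Lasso}(\cat{C})$, and horizontal composition of squares is the operation $*$ whose $X$-component is the diagonal of the commuting square in the paragraph preceding the proposition.

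First I would verify that $g * f$ is actually a morphism in $\cat{Lasso}(\cat{C})$ from $(\Ma,\mu) \circ (\La,\eta)$ to $(\Ma',\mu') \circ (\La',\eta')$. Epic-ness of each $(g*f)_X$ follows from the epi-triangle lemma (Lemma~\ref{lemma:epi-triangle}) applied to either factorization of the diagonal, and the required compatibility triangle follows by writing $(g*f)_X = g_{\La' X} \circ \Ma f_X$, applying naturality of $\mu$ to rewrite $\Ma f_X \circ \mu_{\La X} = \mu_{\La' X} \circ f_X$, then using the defining property of $g$ to replace $g_{\La' X} \circ \mu_{\La' X}$ by $\mu'_{\La' X}$, and finally the defining property of $f$ to replace $f_X \circ \eta_X$ by $\eta'_X$. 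The key remaining step is the interchange law $(g_2 \circ g_1) * (f_2 \circ f_1) = (g_2 * f_2) \circ (g_1 * f_1)$, which says that horizontal and vertical composition of squares commute; componentwise this reduces to the Godement identity for horizontal composition of natural transformations between endofunctors of $\cat{C}$ and follows from naturality together with functoriality of the $\La_i$ and $\Ma_i$. Associativity and unitality of both compositions, source/target compatibility, and the unit-square identities are all routine consequences of the strict composition of functors and the vertical composition of natural transformations.

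The main obstacle I anticipate is bookkeeping rather than anything conceptual: one must keep track of the two equal expressions $g_{\La' X} \circ \Ma f_X$ and $\Ma' f_X \circ g_{\La X}$ for the diagonal and invoke whichever is convenient at each step of the relevant diagram chase, since it is precisely this ambiguity which encodes the Godement identity. Once this is set up cleanly, both the well-definedness of $g*f$ and the interchange law drop out. I also note that, because the vertical morphism structure is trivial, the resulting double category is equivalent to a one-object strict $2$-category; so the content of the proposition is essentially that $\cat{Lasso}(\cat{C})$ carries the structure of a strict monoidal category with $*$ playing the role of Godement horizontal composition on $2$-cells.
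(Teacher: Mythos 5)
Your proposal is correct and takes essentially the same route as the paper: the same double-categorical data (single object, lassos as horizontal arrows, only identity verticals, lasso morphisms as squares), the same definition of $g*f$ as the diagonal of the naturality square, and the same identification of the interchange law as the one nontrivial axiom, verified by the same diagram chase. If anything you are slightly more careful than the paper, which does not explicitly check that $g*f$ is a well-defined morphism in $\cat{Lasso}(\cat{C})$ (i.e.\ that it is epic and satisfies the compatibility triangle with the units); your verification of this via naturality and the epi-triangle lemma is a welcome addition.
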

\begin{proof}
Given the following lassos and maps between them ($2$-cells) in the category of lassos
% https://q.uiver.app/#q=WzAsOSxbMCwwLCJcXGJ1bGxldCJdLFsyLDAsIlxcYnVsbGV0Il0sWzQsMCwiXFxidWxsZXQiXSxbMCwxLCJcXGJ1bGxldCJdLFsyLDEsIlxcYnVsbGV0Il0sWzQsMSwiXFxidWxsZXQiXSxbMCwyLCJcXGJ1bGxldCJdLFsyLDIsIlxcYnVsbGV0Il0sWzQsMiwiXFxidWxsZXQiXSxbMCwxLCIoXFxMYSxcXGV0YSkiLDFdLFsxLDIsIihcXE1hLFxcbXUpIiwxXSxbMyw0LCIoXFxMYScsXFxldGEnKSIsMV0sWzQsNSwiKFxcTWEnLFxcbXUnKSIsMV0sWzYsNywiKFxcTGEnLFxcZXRhJycpIiwxXSxbNyw4LCIoXFxNYScnLFxcbXUnJykiLDFdLFswLDMsIlxcaWQiLDFdLFsyLDUsIlxcaWQiLDFdLFsxLDQsIlxcaWQiLDFdLFszLDYsIlxcaWQiLDFdLFs0LDcsIlxcaWQiLDFdLFs1LDgsIlxcaWQiLDFdLFs5LDExLCJmIiwwLHsic2hvcnRlbiI6eyJzb3VyY2UiOjIwLCJ0YXJnZXQiOjIwfX1dLFsxMCwxMiwiZyIsMCx7InNob3J0ZW4iOnsic291cmNlIjoyMCwidGFyZ2V0IjoyMH19XSxbMTIsMTQsImcnIiwwLHsic2hvcnRlbiI6eyJzb3VyY2UiOjIwLCJ0YXJnZXQiOjIwfX1dLFsxMSwxMywiZiciLDAseyJzaG9ydGVuIjp7InNvdXJjZSI6MjAsInRhcmdldCI6MjB9fV1d
\[\begin{tikzcd}
	\bullet && \bullet && \bullet \\
	\bullet && \bullet && \bullet \\
	\bullet && \bullet && \bullet
	\arrow[""{name=0, anchor=center, inner sep=0}, "{(\La,\eta)}"{description}, from=1-1, to=1-3]
	\arrow["\id"{description}, from=1-1, to=2-1]
	\arrow[""{name=1, anchor=center, inner sep=0}, "{(\Ma,\mu)}"{description}, from=1-3, to=1-5]
	\arrow["\id"{description}, from=1-3, to=2-3]
	\arrow["\id"{description}, from=1-5, to=2-5]
	\arrow[""{name=2, anchor=center, inner sep=0}, "{(\La',\eta')}"{description}, from=2-1, to=2-3]
	\arrow["\id"{description}, from=2-1, to=3-1]
	\arrow[""{name=3, anchor=center, inner sep=0}, "{(\Ma',\mu')}"{description}, from=2-3, to=2-5]
	\arrow["\id"{description}, from=2-3, to=3-3]
	\arrow["\id"{description}, from=2-5, to=3-5]
	\arrow[""{name=4, anchor=center, inner sep=0}, "{(\La',\eta'')}"{description}, from=3-1, to=3-3]
	\arrow[""{name=5, anchor=center, inner sep=0}, "{(\Ma'',\mu'')}"{description}, from=3-3, to=3-5]
	\arrow["f", shorten <=4pt, shorten >=4pt, Rightarrow, from=0, to=2]
	\arrow["g", shorten <=4pt, shorten >=4pt, Rightarrow, from=1, to=3]
	\arrow["{f'}", shorten <=4pt, shorten >=4pt, Rightarrow, from=2, to=4]
	\arrow["{g'}", shorten <=4pt, shorten >=4pt, Rightarrow, from=3, to=5]
\end{tikzcd}\]
we are required to show that vertical and horizontal composition of the $2$-cells is associative. Consider the following commutative diagram.
% https://q.uiver.app/#q=WzAsOSxbMCwwLCJcXE1hXFxMYSBYIl0sWzIsMCwiXFxNYSdcXExhIFgiXSxbNCwwLCJcXE1hJydcXExhIFgiXSxbMCwyLCJcXE1hXFxMYScgWCJdLFsyLDIsIlxcTWEnXFxMYScgWCJdLFs0LDIsIlxcTWEnJ1xcTGEnIFgiXSxbMCw0LCJcXE1hXFxMYScnWCJdLFsyLDQsIlxcTWEnXFxMYScnWCJdLFs0LDQsIlxcTWEnJ1xcTGEnJ1giXSxbMCwxLCJnX3tcXExhIFh9Il0sWzEsMiwiZydfe1xcTGEgWH0iXSxbMyw0LCJnX3tcXExhJ1h9Il0sWzQsNSwiZydfe1xcTGEnWH0iXSxbNiw3LCJnX3tcXExhJydYfSJdLFs3LDgsImcnX3tcXExhJydYfSJdLFswLDMsIlxcTWEgZl9YIiwxXSxbMyw2LCJcXE1hIGYnX1giLDFdLFsxLDQsIlxcTWEnZl9YIiwxXSxbNCw3LCJcXE1hJyBmJ19YIiwxXSxbMiw1LCJcXE1hJydmX1giLDFdLFs1LDgsIlxcTWEnJ2YnX1giLDFdLFswLDQsIihnKmYpX1giLDFdLFs0LDgsIihnJypmJylfWCIsMV0sWzAsNiwiXFxNYShmJ1xcY2lyYyBmKV9YIiwxLHsibGFiZWxfcG9zaXRpb24iOjYwLCJjdXJ2ZSI6NX1dLFs2LDgsIihnJ1xcY2lyYyBnKV97XFxMYScnWH0iLDEseyJjdXJ2ZSI6NH1dXQ==
\[\begin{tikzcd}
	{\Ma\La X} && {\Ma'\La X} && {\Ma''\La X} \\
	\\
	{\Ma\La' X} && {\Ma'\La' X} && {\Ma''\La' X} \\
	\\
	{\Ma\La''X} && {\Ma'\La''X} && {\Ma''\La''X}
	\arrow["{g_{\La X}}", from=1-1, to=1-3]
	\arrow["{\Ma f_X}"{description}, from=1-1, to=3-1]
	\arrow["{(g*f)_X}"{description}, from=1-1, to=3-3]
	\arrow["{\Ma(f'\circ f)_X}"{description, pos=0.6}, curve={height=30pt}, from=1-1, to=5-1]
	\arrow["{g'_{\La X}}", from=1-3, to=1-5]
	\arrow["{\Ma'f_X}"{description}, from=1-3, to=3-3]
	\arrow["{\Ma''f_X}"{description}, from=1-5, to=3-5]
	\arrow["{g_{\La'X}}", from=3-1, to=3-3]
	\arrow["{\Ma f'_X}"{description}, from=3-1, to=5-1]
	\arrow["{g'_{\La'X}}", from=3-3, to=3-5]
	\arrow["{\Ma' f'_X}"{description}, from=3-3, to=5-3]
	\arrow["{(g'*f')_X}"{description}, from=3-3, to=5-5]
	\arrow["{\Ma''f'_X}"{description}, from=3-5, to=5-5]
	\arrow["{g_{\La''X}}", from=5-1, to=5-3]
	\arrow["{(g'\circ g)_{\La''X}}"{description}, curve={height=24pt}, from=5-1, to=5-5]
	\arrow["{g'_{\La''X}}", from=5-3, to=5-5]
\end{tikzcd}\]
Composing the two long horizontal and vertical arrows yields $((g'\circ g)*(f'\circ f))_X$ and hence
\(
(g'\circ g)*(f'\circ f) = (g*f)\circ (g'*f')
\)
as required.
\end{proof}

\bibliographystyle{plain}
\bibliography{wliterature}

\end{document}